\theoremstyle{plain}
\newtheorem{theorem}{Theorem}[section]
\newtheorem{lemma}[theorem]{Lemma}
\newtheorem{defn}[theorem]{Definition}
\newtheorem{prop}[theorem]{Proposition}
\newtheorem{problem}{Problem}[section]
\theoremstyle{definition}
\newtheorem{remark}[theorem]{Remark}
\newtheorem{other}{}
\title{\bf The matching polynomials and spectral radii  of uniform supertrees\footnote{This work was partially supported by National Natural
Science Foundation of China (Nos. 11561032, 11571222, 11471210), the Jiangxi Science Fund for Distinguished Young Scholars and the funds of the Education Department
 of Jiangxi Province (No. GJJ150345).  }}
\author{Li Su$^{a,b}$, \, Liying Kang$^{a}$\footnote{Corresponding author. E-mail addresses: lykang@shu.edu.cn\,(L. Kang),  suli@jxnu.edu.cn\,(L. Su), lhh@mail.ustc.edu.cn\,(H. Li), efshan@i.shu.edu.cn\,(E. Shan)}, \, Honghai Li$^{b}$, \, Erfang Shan$^{c}$\\[5mm]
\small $^a$ Department of Mathematics, Shanghai University, Shanghai 200444,  China\\
\small   $^b$College of Mathematics and Information Science, Jiangxi Normal University\\
\small  Nanchang, Jiangxi 330022,  China\\
\small   $^c$School of Management, Shanghai University,
Shanghai 200444,  China}
\date{}
\begin{document}
\maketitle
\begin{abstract}
We study matching polynomials of uniform hypergraph and spectral radii  of uniform supertrees. By comparing the matching polynomials of supertrees, we extend Li and Feng's  results on grafting operations on graphs to supertrees. Using the methods of grafting operations on supertrees and comparing matching polynomials of supertrees,  we determine the first    $\lfloor\frac{d}{2}\rfloor+1$  largest spectral radii of  $r$-uniform supertrees with size $m$ and diameter $d$. In addition, the first two smallest spectral radii of  supertrees with size $m$ are determined.

\vspace{3mm}

\noindent {\it MSC classification}\,: 15A18, 05C65, 05C31

\vspace{2mm}

\noindent {\it Keywords}\,: Hypergraph; Adjacency tensor; Eigenvalues; Matching polynomial;  Supertree.

\end{abstract}

\section{Introduction}

The ordering of graphs by spectral radius was  proposed by Collatz and Sinogowitz~\cite{CollatzSinogowitz57} in 1957.  Lov\'asz and Pelik\'an~\cite{LovaszPelikan73} investigated the spectral radius of trees and determined the first two largest and smallest spectral radii of trees with given order. Brualdi and Solheid~\cite{BrualdiSolheid} proposed the problem of     bounding the spectral radius of some class of graphs and    characterizing the corresponding extremal graphs.   Since then, many authors studied  the spectral radius of trees with some given parameters, such as  degree, diameter, etc.

A {\em hypergraph} $\mathcal{H}$ is a pair $(V,E)$, where $E\subseteq \mathcal{P}(V)$ and $\mathcal{P}(V)$ stands for the power set of $V$. The elements of $V=V(\mathcal{H})$  are referred to as {\em vertices} and the elements of $E=E(\mathcal{H})$ are called {\em hyperedges} or {\em edges}.
A hypergraph $\mathcal{H}$ is {\em $r$-uniform}  if every edge $e\in E(\mathcal{H})$ contains precisely $r$ vertices.  For a vertex $v\in V$, we denote by $E_v$ the set of edges containing $v$. The cardinality $|E_v|$ is the {\em degree} of $v$, denoted by $deg(v)$.
 A vertex with degree one is called a {\em core vertex}, and a vertex with degree larger than one is called an {\em intersection vertex}.
 If any two edges in $\mathcal{H}$ share at most one vertex, then $\mathcal{H}$ is said to be a {\em linear hypergraph}. In this paper we assume  that  hypergraphs are linear and $r$-uniform.

In a hypergraph $\mathcal{H}$,  two vertices $u$ and $v$ are {\em adjacent} if there is an edge $e$ of $\mathcal{H}$ such
that $\{u,v\}\subseteq e$. A vertex $v$ is said to be {\em incident} to an edge $e$ if $v\in e$.
A {\em walk} of hypergraph $\mathcal{H}$ is defined to be an alternating sequence of vertices and edges
$v_1e_1v_2e_2\cdots v_{\ell}e_{\ell}v_{\ell+1}$ satisfying that both $v_{i}$ and $v_{i+1}$ are
incident to $e_i$ for $1\leqslant i\leqslant\ell$. A walk is called a {\em path} if all vertices
and edges in the walk are distinct. The {\em length} of a path is the number of edges in it.
The walk is {\em  closed} if $v_{l+1}=v_1$. A closed walk is called a {\em  cycle} if all vertices and edges in the walk are distinct.
A
hypergraph $\mathcal{H}$ is called {\em connected} if  any two of its vertices are linked by a path in $\mathcal{H}$. The {\em distance} between two vertices is the length of a shortest path connecting
them. The {\em diameter} of a connected $r$-uniform hypergraph $\mathcal{H}$ is the maximum distance among all
vertices of $\mathcal{H}$.
A hypergraph $\mathcal{H}$  is called {\em acyclic} or a  {\em superforest}  if it contains no cycle. A connected superforest  is called a {\em supertree}.

In \cite{LiShaoQi16}  some transformations on hypergraphs such as  ¡°moving edges¡±  and ¡°edge-releasing¡± were introduced and the first two spectral radii of supertrees on $n$ vertices were characterized.  Yuan et. al~\cite{yuanshaoshan-supertrees-16} further determined the first eight uniform supertrees on $n$ vertices with the largest spectral radii. Xiao et. al \cite{XiaoWanglu-supertree-degreesequ-17}  characterized the unique  uniform supertree with the maximum spectral radius among all  uniform supertrees with a given degree sequence.
Recently, the first two largest spectral radii of uniform supertrees with given diameter were characterized in \cite{XiaoWangDu-supertree-diam-18}.

In this paper,  we  determine the first    $\lfloor\frac{d}{2}\rfloor+1$  largest  spectral radii of supertrees among all $r$-uniform supertrees with size $m$ and diameter $d$  and   the first two smallest spectral radii of supertrees with size $m$.
The structure of the remaining part of the paper is as follows: In Section 2, we give some basic definitions and results for tensor and spectra of hypergraphs. Section 3 extends the theory of matching polynomial from graphs to supertrees. By comparing the matching polynomial of supertrees, we  generalize Li and Feng's results on  grafting operations on graphs  to supertrees in Section 4.  By using the method   of grafting operations on supertrees and comparing matching polynomial of supertrees, we  determine the first    $\lfloor\frac{d}{2}\rfloor+1$ spectral radii of supertrees among all $r$-uniform supertrees with size $m$ and diameter $d$ in Section 5.
In  Section 6, the first two smallest spectral radii of supertrees are determined. We give  closing remarks in the last section.

\section{Preliminaries}

Let $\mathcal{H}=(V, E)$ be an $r$-uniform hypergraph on $n$ vertices. A {\em partial hypergraph}  $\mathcal{H}'=(V', E')$ of  $\mathcal{H}$ is a hypergraph with $V'\subseteq V$ and $E'\subseteq E$. A {\em proper partial hypergraph} $\mathcal{H}'$ of  $\mathcal{H}$  is partial hypergraph of $\mathcal{H}$ with $\mathcal{H}'\neq\mathcal{H}$. For a vertex subset $S\subset V$,  let   $\mathcal{H}-S=(V'',E'')$ be the partial hypergraph of $\mathcal{H}$ satisfying that $V''=V\setminus S$, and for any $e\in E$, if $e\subseteq V''$, then $e\in E''$. When $S=\{v\}$,  $\mathcal{H}-S$ is simply written as $\mathcal{H}-v$. For an edge $e=\{v_1,\ldots,v_t\}\in E(\mathcal{H})$, let $\mathcal{H}\setminus e$ stand for the partial hypergraph of $\mathcal{H}$ obtained by   deletion of the edge $e$ from $\mathcal{H}$, i.e. $\mathcal{H}\setminus e=(V, E\setminus\{e\})$, and     $\mathcal{H}-V(e)$ stand for the partial hypergraph of $\mathcal{H}- \{v_1,\ldots,v_t\}$. Denote by $N_k$ the hypergraph consisting of $k$ isolated vertices.

Let $\mathcal{G}$ and $\mathcal{H}$ be two $r$-uniform hypergraphs, and  $u$ a vertex of $\mathcal{G}$ and $v$  a vertex of $\mathcal{H}$.  Denote by  $\mathcal{G}\cdot\mathcal{H}$  the {\em coalescence} of $\mathcal{G}$ and $\mathcal{H}$, obtained from $\mathcal{G}\cup\mathcal{H}$  by identifying $u$ of $\mathcal{G}$ and $v$  of $\mathcal{H}$ (as a new vertex $w$). That is, $V(\mathcal{G}\cdot\mathcal{H})=V(\mathcal{G}-u)\cup V(\mathcal{H}-v)\cup\{w\}$ and  $E(\mathcal{G}\cdot\mathcal{H})=E(\mathcal{G}-u)\cup E(\mathcal{H}-v)\cup\{e'|~ e'=e\setminus\{u\}\cup\{w\}, e\in E_{u}\}\cup\{e'|~ e'=e\setminus\{v\}\cup\{w\}, e\in E_{v}\}$. $\mathcal{H}$ is also called an {\em attached hypergraph} at $w$ of $\mathcal{G}\cdot\mathcal{H}$.

 Let $G=(V,E)$ be an ordinary graph. For every $r\geq3$, the {\em $r$th power }of $G$, denoted by $G^r$, is an $r$-uniform hypergraph with vertex set $V(G^r)=V\cup(\cup_{e\in E}\{i_{e,1},\ldots, i_{e,r-2}\})$ and edge set
  $E(G^r)=\{e\cup\{i_{e,1},\ldots, i_{e,r-2},\}|~e\in E\}$. The $r$th power of an ordinary tree is called a {\em hypertree} (see \cite{HuQishao13}). Note that all hypertrees are supertrees by the definition.  Let $P_m$ and $S_m$ denote the path and the star with $m$ edges, respectively. The $r$th power of $P_m$ and $S_m$, denoted by $P_{m}^r$ and $S_{m}^r$,  are called {\em loose path} and {\em hyperstar}, respectively.

   Let $\mathcal{H}=(V, E)$ be an $r$-uniform hypergraph.   An edge $e$ is called a {\em pendent edge} if $e$ contains
exactly $r-1$ core vertices. If $e$ is not a pendent edge,  it is  called a
{\em non-pendent edge}.  A path $P=(v_0,e_1,v_1,\ldots, v_{p-1},e_p,v_p)$ of $\mathcal{H}$ is called a {\em pendent path} (attached at $v_0$), if all of the vertices
$v_1,\ldots, v_{p-1}$ are of degree two, the vertex $v_p$   and all the $r-2$ vertices in the set $e_i\setminus\{v_{i-1}, v_i\}$ are core vertices in $\mathcal{H}$ ($i=1,\ldots,p$).

For positive integers $r$ and $n$, a real
{\em tensor} $\mathcal{A}=(a_{i_1i_2\cdots i_r})$ of order $r$ and dimension $n$
refers to a multidimensional array (also called {\em hypermatrix}) with entries
$a_{i_1i_2\cdots i_r}$ such that $a_{i_1i_2\cdots i_r}\in\mathbb{R}$ for
all $i_1$, $i_2$, $\ldots$, $i_r\in[n]$.

The following product of tensors, defined by Shao \cite{Shao-tensorproduct},  is a generalization of
the matrix product.
Let  $\mathcal{A}$ and $\mathcal{B}$ be dimension $n$, order  $r\geqslant 2$
and order $k\geqslant 1$ tensors, respectively. Define the product
$\mathcal{AB}$ to be the tensor $\mathcal{C}$ of  dimension $n$ and order
$(r-1)(k-1)+1$ with entries as
\begin{eqnarray}\label{formu1}
c_{i\alpha_1\cdots\alpha_{r-1}}=\sum_{i_2,\ldots,i_r=1}^na_{ii_2\cdots i_r}
b_{i_2\alpha_1}\cdots b_{i_r\alpha_{r-1}},
\end{eqnarray}
where $i\in [n]$, $\alpha_1,\ldots,\alpha_{r-1}\in [n]^{k-1}$.

From the above definition, if $x=(x_1,x_2,\ldots,x_n)^{\mathrm{T}}\in \mathbb{C}^n$ is a complex
column vector of dimension $n$, then  by (\ref{formu1}) $\mathcal{A}x$ is a vector in $\mathbb{C}^n$ whose $i$th component
is given by
\begin{equation*}
\label{eq:Ax equation}
(\mathcal{A}x)_i=\sum_{i_2,\ldots,i_r=1}^na_{ii_2\cdots i_r}x_{i_2}\cdots x_{i_r},~~
\mbox{for each}\, \,i\in [n].
\end{equation*}

In 2005, Qi \cite{qi05} and Lim
\cite{Lim05} independently introduced the concepts of tensor eigenvalues and the spectra of tensors.

Let $\mathcal{A}$ be an order $r$ dimension $n$ tensor, $x=(x_1,x_2,\ldots,x_n)^{\mathrm{T}}\in\mathbb{C}^n$
 a column vector of dimension $n$. If there exists a number $\lambda\in\mathbb{C}$
and a nonzero vector $x\in\mathbb{C}^{n}$ such that
\begin{equation*}
\mathcal{A}x=\lambda x^{[r-1]},
\end{equation*}
where $x^{[r-1]}$ is a vector with $i$-th entry $x^{r-1}_i$, then $\lambda$ is called an {\em eigenvalue} of $\mathcal{A}$, $x$ is called
an {\em eigenvector} of $\mathcal{A}$ corresponding to the eigenvalue $\lambda$.
The {\em spectral
radius} of $\mathcal{A}$ is the maximum modulus of the eigenvalues of $\mathcal{A}$.

In 2012, Cooper and Dutle \cite{CoopDut12} defined the
adjacency tensors for $r$-uniform hypergraphs.

\begin{defn}{\rm
(\cite{CoopDut12})
Let $\mathcal{H}=(V, E)$ be an $r$-uniform hypergraph on $n$ vertices. The adjacency
tensor of $\mathcal{H}$ is defined as the order $r$ and dimension $n$ tensor
$\mathcal{A}(\mathcal{H})=(a_{i_1i_2\cdots i_r})$, whose $(i_1i_2\cdots i_r)$-entry is
\[
a_{i_1i_2\cdots i_r}=\begin{cases}
\frac{1}{(r-1)!}, & \text{if}~\{i_1,i_2,\ldots,i_r\}\in E,\\
0, & \text{otherwise}.
\end{cases}
\] }
\end{defn}

The {\em spectral radius of hypergraph} $\mathcal{H}$ is defined as spectral radius  of its  adjacency tensor, denoted by $\rho(\mathcal{H})$.
In \cite{Fri} the weak irreducibility of nonnegative tensors was defined. It
was proved that an $r$-uniform hypergraph $\mathcal{H}$ is connected if and only if its adjacency
tensor $\mathcal{A}(\mathcal{H})$ is weakly irreducible (see \cite{Fri} and
\cite{YANGYANG-11}). Part of the Perron-Frobenius theorem for nonnegative tensors is
stated in the following for reference.
\begin{theorem}{\rm (\cite{QiLuo-2017})}
\label{thm:Perron-Frobenius}
Let $\mathcal{A}$ be a nonnegative tensor of order $r$ and dimension $n$, where $r, n\geq2$. Then   $\rho(\mathcal{A})$ is an eigenvalue of $\mathcal{A}$ with a nonnegative eigenvector corresponding to it.
If  $\mathcal{A}$ is weakly irreducible, then $\rho(\mathcal{A})$ is a positive
 eigenvalue of $\mathcal{A}$ with a positive eigenvector ${x}$. Furthermore,  $\rho(\mathcal{A})$ is the unique eigenvalue of  $\mathcal{A}$ with a positive eigenvector, and $x$ is the unique positive eigenvector associated with $\rho(\mathcal{A})$, up to a multiplicative  constant.
\end{theorem}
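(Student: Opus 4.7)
The plan is to follow the standard Perron--Frobenius template adapted to the tensor setting, mimicking the classical matrix proof via a fixed--point argument on the nonnegative simplex, and then bootstrapping the weak irreducibility hypothesis to upgrade nonnegativity to positivity. First, I would set $S = \{x \in \mathbb{R}^n_{\geq 0} : \sum_i x_i^r = 1\}$ and define the Collatz--Wielandt functional
\[
r(x) = \min_{i:\, x_i > 0} \frac{(\mathcal{A} x)_i}{x_i^{r-1}}, \qquad x \in \mathbb{R}^n_{\geq 0}\setminus\{0\}.
\]
Setting $\lambda^* = \sup_{x \geq 0,\, x \neq 0} r(x)$, I would show the supremum is attained by a continuity/compactness argument on $S$ (using homogeneity to restrict to the compact simplex). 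The maximizer $x^*$ must satisfy $\mathcal{A} x^* = \lambda^* (x^*)^{[r-1]}$; otherwise a perturbation along a coordinate where equality fails would strictly increase $r(x)$, contradicting maximality. This yields the first statement: $\lambda^* = \rho(\mathcal{A})$ is an eigenvalue with nonnegative eigenvector.

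Next, to upgrade to strict positivity under weak irreducibility, I would argue by contradiction. Suppose $x^*$ has a nonempty zero set $Z = \{i : x^*_i = 0\}$ and nonempty support $\bar Z = [n]\setminus Z$. Evaluating the eigen-equation at any $i \in Z$ gives
\[
0 = \lambda^*\cdot 0 = (\mathcal{A}x^*)_i = \sum_{i_2,\ldots,i_r} a_{i i_2\cdots i_r} x^*_{i_2}\cdots x^*_{i_r},
\]
which forces $a_{i i_2\cdots i_r}=0$ whenever all $i_2,\ldots,i_r \in \bar Z$. Translating to the representing digraph used to define weak irreducibility (where there is an arc $i \to j$ iff some entry $a_{i i_2 \cdots i_r}$ with $j \in \{i_2,\ldots,i_r\}$ is positive), this says there is no arc from $Z$ to $\bar Z$, contradicting weak irreducibility. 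Hence $Z = \emptyset$ and $x^*$ is strictly positive.

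For uniqueness of the positive eigenvector, I would use the standard scaling trick. Suppose $y > 0$ also satisfies $\mathcal{A}y = \mu\, y^{[r-1]}$ for some $\mu$. Setting $t = \max_i (y_i / x^*_i)$ and $z = t x^* - y \geq 0$ with at least one zero coordinate $i_0$, one shows by plugging into the eigen-equation and using positivity of $x^*$ that in fact $z \equiv 0$, so $y = tx^*$ and consequently $\mu = \lambda^*$. The main obstacle in all three steps is the nonlinearity: unlike the matrix case, $\mathcal{A}x$ is a degree $(r-1)$ homogeneous polynomial map, so careful use of homogeneity (replacing the simplex norm by the $\ell_r$ sphere) and the combinatorial translation between vanishing of polynomial entries and the digraph definition of weak irreducibility are where the argument differs substantively from Perron's original proof; I expect the delicate point to be justifying that the Collatz--Wielandt supremum is attained and that the resulting maximizer genuinely solves the eigenequation rather than a one-sided inequality.
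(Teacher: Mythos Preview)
The paper does not give its own proof of this statement: Theorem~2.2 is quoted verbatim from Qi and Luo's book \cite{QiLuo-2017} as background, with no argument supplied. So there is nothing in the paper to compare your sketch against; your proposal is an attempt to reprove a cited result that the authors take for granted.

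That said, your outline follows the standard Collatz--Wielandt route used in the tensor Perron--Frobenius literature, and the overall architecture is sound. One step, however, is not right as written. In the weak--irreducibility paragraph you deduce from $(\mathcal{A}x^*)_i=0$ for $i\in Z$ that $a_{i i_2\cdots i_r}=0$ whenever \emph{all} of $i_2,\ldots,i_r$ lie in $\bar Z$; that is correct. But you then conclude ``there is no arc from $Z$ to $\bar Z$'' in the representing digraph. That does not follow: an arc $i\to j$ with $i\in Z$, $j\in\bar Z$ only requires some positive entry $a_{i i_2\cdots i_r}$ with $j$ among $i_2,\ldots,i_r$, and such an entry may well have other indices in $Z$ (so the product $x^*_{i_2}\cdots x^*_{i_r}$ vanishes without forcing the coefficient to vanish). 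The usual fix is not a direct digraph contradiction but a perturbation: add $\epsilon>0$ to the zero coordinates of $x^*$ and use strong connectivity of the digraph to show the Collatz--Wielandt value strictly increases, contradicting maximality. You should also be explicit that $r(\cdot)$ is only upper semicontinuous on the simplex, which is what actually guarantees the supremum is attained.
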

The unique positive eigenvector $x$ with $\sum_{i=1}^nx_i^r=1$  corresponding to $\rho(\mathcal{H})$ is called the {\em principal eigenvector} of $\mathcal{H}$.

\begin{theorem}{\rm (\cite{YANGYANG-SIAM-10})}
\label{yang}
 Let $\mathcal{A}, \mathcal{B}$ be order $r$ and dimension $n$ nonnegative tensors, and $\mathcal{A}\not=\mathcal{B}$. If $\mathcal{B}\leq \mathcal{A}$ and $\mathcal{A}$ is weakly irreducible, then $\rho(\mathcal{A})>\rho(\mathcal{B})$.
\end{theorem}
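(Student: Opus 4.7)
The plan is to prove $\rho(\mathcal{B})\le\rho(\mathcal{A})$ first via a Collatz--Wielandt comparison on the positive Perron eigenvector of $\mathcal{A}$, and then upgrade it to a strict inequality using the weak irreducibility of $\mathcal{A}$ together with a strict Perron--Frobenius argument.

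For the weak inequality, I would use Theorem~\ref{thm:Perron-Frobenius} to pick a positive eigenvector $x>0$ with $\mathcal{A}x=\rho(\mathcal{A})x^{[r-1]}$. Since $0\le\mathcal{B}\le\mathcal{A}$ entrywise and $x>0$,
\[
(\mathcal{B}x)_i=\sum_{i_2,\ldots,i_r}b_{i\,i_2\cdots i_r}\,x_{i_2}\cdots x_{i_r}\le(\mathcal{A}x)_i=\rho(\mathcal{A})\,x_i^{r-1}\qquad(i\in[n]),
\]
and the upper Collatz--Wielandt bound $\rho(\mathcal{B})\le\max_i(\mathcal{B}x)_i/x_i^{r-1}$, valid for any nonnegative tensor tested against a positive vector, immediately gives $\rho(\mathcal{B})\le\rho(\mathcal{A})$. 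For the strict upgrade, I would assume $\rho(\mathcal{B})=\rho(\mathcal{A})=:\rho$, apply the nonnegative part of Theorem~\ref{thm:Perron-Frobenius} to $\mathcal{B}$ to obtain a nonzero eigenvector $y\ge 0$ with $\mathcal{B}y=\rho y^{[r-1]}$, and note that $\mathcal{A}\ge\mathcal{B}$ gives $\mathcal{A}y\ge\rho y^{[r-1]}$ componentwise. If $y$ is strictly positive, then picking any tuple $(i_0,i_2,\ldots,i_r)$ where $a_{i_0\,i_2\cdots i_r}>b_{i_0\,i_2\cdots i_r}$ yields $(\mathcal{A}y)_{i_0}>(\mathcal{B}y)_{i_0}=\rho y_{i_0}^{r-1}$, so the inequality $\mathcal{A}y\ge\rho y^{[r-1]}$ is strict in at least one coordinate; the strict Collatz--Wielandt for weakly irreducible tensors (if $z>0$ satisfies $\mathcal{A}z\ge\mu z^{[r-1]}$ with one strict entry, then $\rho(\mathcal{A})>\mu$) delivers the contradiction $\rho(\mathcal{A})>\rho$.

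The main obstacle is the case that $y$ has zero entries: every tuple where $\mathcal{A}$ strictly exceeds $\mathcal{B}$ could involve some $i_k$ with $y_{i_k}=0$, so that the direct comparison gives only $\mathcal{A}y=\mathcal{B}y$. I would resolve this by perturbation: replace $y$ by $y_\varepsilon:=y+\varepsilon\mathbf{1}$ with small $\varepsilon>0$ (now strictly positive), and use the strong connectivity of the digraph associated to $\mathcal{A}$ (which is equivalent to weak irreducibility) to show that, after possibly iterating $\mathcal{A}$ a bounded number of times in the sense of the Shao product, the strict inequality $\mathcal{A}y_\varepsilon\ge(\rho+\delta)y_\varepsilon^{[r-1]}$ holds in every coordinate for some $\delta>0$ depending on $\varepsilon$. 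Applying the strict Collatz--Wielandt to this positive test vector then yields $\rho(\mathcal{A})>\rho$, contradicting the assumption. Making the spreading of the strict inequality quantitative---so that the perturbation $\varepsilon\mathbf{1}$ does not wash out the strict gap---is where the proof does real work, and it is exactly the step that uses the weak irreducibility of $\mathcal{A}$ in an essential way.
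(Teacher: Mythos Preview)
The paper does not give a proof of this statement: it is quoted from \cite{YANGYANG-SIAM-10} as a background tool (note the citation in the theorem heading) and is used without argument, so there is no proof in the present paper to compare your proposal against.

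For what it is worth, your outline follows the standard Perron--Frobenius/Collatz--Wielandt strategy, and the easy half (the weak inequality $\rho(\mathcal{B})\le\rho(\mathcal{A})$ via the positive Perron vector $x$ of $\mathcal{A}$) is correct as written. You also correctly isolate the genuine difficulty in the strict upgrade: when the nonnegative Perron eigenvector $y$ of $\mathcal{B}$ has zero entries, the comparison $\mathcal{A}y\ge\mathcal{B}y$ need not be strict anywhere. Your proposed fix via $y_\varepsilon=y+\varepsilon\mathbf{1}$ and ``spreading'' the strict gap through Shao-product iterates is plausible in spirit but is left as an assertion; making that quantitative is exactly the nontrivial content carried by the cited reference. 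A cleaner packaging that you may find easier to complete is to interpose $\mathcal{C}=\tfrac12(\mathcal{A}+\mathcal{B})$: it satisfies $\mathcal{B}\le\mathcal{C}\le\mathcal{A}$, inherits weak irreducibility from $\mathcal{A}$ (same support), and the positive vector $x$ gives $\mathcal{C}x\le\rho(\mathcal{A})x^{[r-1]}$ with a genuinely strict coordinate since $x>0$; invoking the strict Collatz--Wielandt bound for the weakly irreducible tensor $\mathcal{C}$ (the very tool you already appeal to for $\mathcal{A}$) then yields $\rho(\mathcal{B})\le\rho(\mathcal{C})<\rho(\mathcal{A})$ without ever touching an eigenvector of $\mathcal{B}$.
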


The  following result can be obtained directly from Theorem \ref{yang} and will be often used in the sequel.

\begin{theorem}\label{YANGYANG-SIAM-10}
Suppose that $\mathcal{G}$  is a uniform hypergraph,   and $\mathcal{G}'$ is a partial hypergraph of $\mathcal{G}$. Then  $\rho(\mathcal{G}')\leq\rho(\mathcal{G})$. Furthermore, if in addition $\mathcal{G}$ is connected and $\mathcal{G}'$ is a proper partial hypergraph, we have  $\rho(\mathcal{G}')<\rho(\mathcal{G})$.
\end{theorem}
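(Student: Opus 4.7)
My plan is to reduce the statement to Theorem \ref{yang} after aligning the dimensions of the two adjacency tensors. Since $\mathcal{G}'$ is a partial hypergraph of $\mathcal{G}$, we have $V(\mathcal{G}')\subseteq V(\mathcal{G})$ and $E(\mathcal{G}')\subseteq E(\mathcal{G})$. Set $n=|V(\mathcal{G})|$ and pad $\mathcal{A}(\mathcal{G}')$ with zeros on every index tuple that meets $V(\mathcal{G})\setminus V(\mathcal{G}')$, obtaining an order-$r$, dimension-$n$ nonnegative tensor $\mathcal{A}'$; this is the adjacency tensor of $\mathcal{G}'$ together with $n-|V(\mathcal{G}')|$ extra isolated vertices. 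A direct argument with nonnegative eigenvectors (extend any Perron eigenvector of $\mathcal{A}(\mathcal{G}')$ by zero on the padded coordinates; conversely, restrict a Perron eigenvector of $\mathcal{A}'$ to the nonzero block) shows $\rho(\mathcal{A}')=\rho(\mathcal{G}')$. By the inclusion $E(\mathcal{G}')\subseteq E(\mathcal{G})$, the entrywise domination $\mathcal{A}'\leq\mathcal{A}(\mathcal{G})$ holds.

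For the strict inequality, assume $\mathcal{G}$ is connected and $\mathcal{G}'$ is a proper partial hypergraph. The paper records that connectedness of $\mathcal{G}$ is equivalent to weak irreducibility of $\mathcal{A}(\mathcal{G})$, so Theorems \ref{thm:Perron-Frobenius} and \ref{yang} apply to $\mathcal{A}(\mathcal{G})$. Properness forces either $V(\mathcal{G}')\subsetneq V(\mathcal{G})$ or $E(\mathcal{G}')\subsetneq E(\mathcal{G})$. In the first case an absent vertex $v$ lies in some edge of $\mathcal{G}$ (since $\mathcal{G}$ is connected and nontrivial), and that edge contributes a nonzero entry to $\mathcal{A}(\mathcal{G})$ which is zero in $\mathcal{A}'$; in the second case an absent edge itself produces the discrepancy. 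Either way $\mathcal{A}'\neq\mathcal{A}(\mathcal{G})$, and Theorem \ref{yang} yields $\rho(\mathcal{G}')=\rho(\mathcal{A}')<\rho(\mathcal{A}(\mathcal{G}))=\rho(\mathcal{G})$.

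The remaining issue, and the main obstacle, is the weak inequality without any connectedness hypothesis, because Theorem \ref{yang} requires weak irreducibility. I plan to handle this by a perturbation argument: fix a weakly irreducible order-$r$, dimension-$n$ nonnegative tensor $\mathcal{J}$ (for instance the adjacency tensor of the complete $r$-uniform hypergraph on $V(\mathcal{G})$) and, for each $\epsilon>0$, apply Theorem \ref{yang} to the domination $\mathcal{A}'\leq\mathcal{A}(\mathcal{G})+\epsilon\mathcal{J}$; the right-hand side is weakly irreducible and strictly larger than $\mathcal{A}'$, so $\rho(\mathcal{A}')<\rho(\mathcal{A}(\mathcal{G})+\epsilon\mathcal{J})$. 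Letting $\epsilon\to 0^+$ and using continuity of the spectral radius in the tensor entries delivers $\rho(\mathcal{G}')\leq\rho(\mathcal{G})$. An alternative would be to invoke a Collatz--Wielandt-type formula for nonnegative tensors directly, but the perturbation-and-limit route stays within the apparatus already collected in the preliminaries of the paper.
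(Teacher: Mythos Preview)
The paper gives no proof of this theorem at all; it simply asserts that the result ``can be obtained directly from Theorem~\ref{yang}'' and moves on. Your argument is correct and is exactly the kind of justification the paper suppresses: the padding step to align dimensions, together with the observation that connectedness of $\mathcal{G}$ gives weak irreducibility of $\mathcal{A}(\mathcal{G})$, makes the strict inequality an immediate consequence of Theorem~\ref{yang}.

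Your treatment of the non-strict inequality is more careful than the paper's one-line dismissal. You correctly flag that Theorem~\ref{yang} as quoted demands weak irreducibility of the dominating tensor, which is not assumed in the first part of the statement, and your perturbation-and-limit argument is a clean fix. The paper presumably has in mind that the bare monotonicity $\rho(\mathcal{B})\le\rho(\mathcal{A})$ for $0\le\mathcal{B}\le\mathcal{A}$ (no irreducibility needed) is an even more elementary fact already present in the Yang--Yang and Friedland--Gaubert--Han papers it cites; it follows, for instance, from the Collatz--Wielandt formula for nonnegative tensors. Either route is fine, and yours has the virtue of staying strictly within the statements collected in Section~2.
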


An operation of {\em moving edges} on hypergraphs was introduced by Li et. al in \cite{LiShaoQi16}.
Let $\mathcal{H}=(V,E)$ be a hypergraph with $u\in V$ and $e_1,\ldots, e_k\in E$, such that $u\notin e_i$ for $i=1,\ldots, k$. Suppose that $v_i\in e_i$ and write $e_i'=(e_i\setminus \{v_i\})\cup \{u\}  \  (i=1,\ldots, k)$. Let $\mathcal{H}'=(V,E')$ be the hypergraph with $E'=(E\setminus\{e_1,\ldots, e_k\})\cup \{e_1',\ldots, e_k'\}$. Then we say that $\mathcal{H}'$ is obtained from $\mathcal{H}$ by {\em moving edges} $(e_1,\ldots, e_k)$ from $(v_1,\ldots, v_k)$ to $u$.

 \begin{theorem}{\rm (\cite{LiShaoQi16})}\label{lem-edgemoving}
 Let $\mathcal{H}$ be a connected hypergraph, $\mathcal{H}'$ be the hypergraph obtained from $\mathcal{H}$ by moving edges $(e_1,\ldots, e_k)$  from $(v_1,\ldots, v_k)$ to $u$.  If $x$ is the principal eigenvector of $\mathcal{H}$ corresponding to $\rho(\mathcal{H})$, and suppose that $x_u\ge \max_{1\leq i\leq k}\{x_{v_i}\}$, then $\rho(\mathcal{H}')>\rho(\mathcal{H})$.
\end{theorem}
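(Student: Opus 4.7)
The plan is to use the variational (Rayleigh) characterization of the spectral radius of the adjacency tensor of an $r$-uniform hypergraph,
\[
\rho(\mathcal{H}) \;=\; \max_{y\geq 0,\;\sum_v y_v^r = 1}\; r\sum_{e\in E(\mathcal{H})}\prod_{v\in e} y_v,
\]
where the maximum is attained by the strictly positive principal eigenvector $x$ supplied by Theorem \ref{thm:Perron-Frobenius}. I would plug $x$ itself into the Rayleigh quotient for $\mathcal{H}'$ and compare the two quantities edge-by-edge.

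Because $E(\mathcal{H}') = (E(\mathcal{H})\setminus\{e_1,\ldots,e_k\})\cup\{e_1',\ldots,e_k'\}$ with $e_i'=(e_i\setminus\{v_i\})\cup\{u\}$, the only change in the objective at $x$ comes from the moved edges, and the difference factors cleanly as
\[
r\sum_{i=1}^k\Bigl(\prod_{w\in e_i'} x_w - \prod_{w\in e_i} x_w\Bigr) \;=\; r\sum_{i=1}^k\Bigl(\prod_{w\in e_i\setminus\{v_i\}} x_w\Bigr)(x_u - x_{v_i}).
\]
Every factor on the right is nonnegative by the positivity of $x$ and the hypothesis $x_u\geq x_{v_i}$, so the Rayleigh quotient of $\mathcal{H}'$ at $x$ is at least $\rho(\mathcal{H})$, giving the weak inequality $\rho(\mathcal{H}')\geq\rho(\mathcal{H})$.

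The main obstacle is upgrading this to a strict inequality, because under the hypothesis the individual factors $x_u-x_{v_i}$ are allowed to vanish. I would handle this by contradiction. Suppose $\rho(\mathcal{H}') = \rho(\mathcal{H})$; then $x$ is a positive maximizer of the Rayleigh quotient for $\mathcal{H}'$, and since $x$ is strictly positive the positivity constraints are inactive, so the Lagrange condition forces $\mathcal{A}(\mathcal{H}')x = \rho(\mathcal{H}')x^{[r-1]} = \rho(\mathcal{H})x^{[r-1]} = \mathcal{A}(\mathcal{H})x$. Reading this coordinate identity at the vertex $u$, which lies in every $e_i'$ but in no $e_i$, gives
\[
0 \;=\; \bigl(\mathcal{A}(\mathcal{H}')x - \mathcal{A}(\mathcal{H})x\bigr)_u \;=\; \sum_{i=1}^k \prod_{w\in e_i\setminus\{v_i\}} x_w,
\]
and the right-hand side is strictly positive because $x>0$. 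This contradiction delivers $\rho(\mathcal{H}')>\rho(\mathcal{H})$.
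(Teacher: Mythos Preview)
The paper does not supply its own proof of this theorem; it is quoted from \cite{LiShaoQi16} as a known result and used as a tool in later sections. Your argument is correct and is essentially the standard one found in the literature for this edge-moving lemma: compare the Rayleigh quotients of $\mathcal{H}$ and $\mathcal{H}'$ at the same principal eigenvector $x$ to obtain $\rho(\mathcal{H}')\ge\rho(\mathcal{H})$, and then force strictness by observing that equality would make $x$ a positive eigenvector of $\mathcal{A}(\mathcal{H}')$ as well, which is impossible at the coordinate $u$ since $u\notin e_i$ but $u\in e_i'$ for every $i$. Each step is sound; in particular your use of the first-order optimality condition at a strictly positive maximizer to recover $\mathcal{A}(\mathcal{H}')x=\rho(\mathcal{H}')x^{[r-1]}$ is legitimate and does not require $\mathcal{H}'$ itself to be connected.
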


The following {\em edge-releasing operation} on linear hypergraphs was given in \cite{LiShaoQi16}.

Let $\mathcal{H}$ be an $r$-uniform linear hypergraph,  $e$ be a non-pendent edge of $\mathcal{H}$ and $u\in e$.
  Let $e_1,e_2,\ldots,e_k$ be all  edges of $G$ adjacent to $e$ but not containing $u$, and suppose that $e_i\cap e=\{v_i\}$ for $i=1,\ldots,k$.    Let $\mathcal{H}'$ be the hypergraph obtained from $\mathcal{H}$ by moving edges $(e_{1},\ldots, e_k)$ from $(v_{1},\ldots, v_k)$ to $u$.  Then $\mathcal{H}'$ is said to be obtained by an {\em edge-releasing operation} on $e$ at $u$.

   By the above definition we  see that if  $\mathcal{H}'$ and $\mathcal{H}''$ are the hypergraphs obtained from an $r$-uniform linear hypergraph $\mathcal{H}$ by an edge-releasing operation on some $e$ at $u$ and at $v$, respectively. Then $\mathcal{H}'$ and $\mathcal{H}''$ are isomorphic. So we simply say $\mathcal{H}'$ is obtained from $\mathcal{H}$ by an {\em edge-releasing operation} on $e$.

The following result was obtained by Zhou et.al \cite{ZhouSunWangBu-EJC-14}, we will use it  in the sequel.
 \begin{theorem}{\rm (\cite{ZhouSunWangBu-EJC-14})}\label{lem_eiggraph-eigpowergraph}
If $\lambda\neq0$  is an eigenvalue of a graph $G$, then $\lambda^{\frac{2}{r}}$ is an eigenvalue of $G^{r}$. Moreover, $\rho(G^{r})=\rho(G)^{\frac{2}{r}}$.
\end{theorem}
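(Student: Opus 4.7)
The plan is to start from an eigenvector $y$ of the adjacency matrix of $G$ and explicitly construct a tensor eigenvector of $\mathcal{A}(G^r)$; the spectral-radius identity will then follow from the Perron--Frobenius theorem (Theorem \ref{thm:Perron-Frobenius}). Concretely, given $Ay = \lambda y$ with $\lambda \neq 0$, I fix a branch of $\lambda^{1/r}$ and of $y_i^{1/r}$, and set
\[
x_i = y_i^{2/r} \quad (i \in V(G)), \qquad x_{i_{e,k}} = \lambda^{-1/r}(y_u y_v)^{1/r} \quad (e = \{u,v\} \in E(G),\ 1 \le k \le r-2),
\]
with the convention $0^{1/r} = 0$. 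This is my candidate eigenvector of $\mathcal{A}(G^r)$ with eigenvalue $\lambda^{2/r}$.

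The next step is to verify the eigen-equation $(\mathcal{A}(G^r) x)_w = \lambda^{2/r} x_w^{r-1}$ at every vertex $w$ of $G^r$, splitting into two cases. For a new vertex $w = i_{e,k}$, only the single hyperedge $\{u,v,i_{e,1},\ldots,i_{e,r-2}\}$ contains $w$, so $(\mathcal{A}(G^r) x)_w = x_u x_v \prod_{l \neq k} x_{i_{e,l}}$; substituting the ansatz and collecting exponents yields $\lambda^{-(r-3)/r}(y_u y_v)^{(r-1)/r}$, which matches $\lambda^{2/r} x_w^{r-1}$. For an original vertex $w = i \in V(G)$, each neighbor $j$ of $i$ in $G$ contributes $x_j \prod_{k=1}^{r-2} x_{i_{e,k}} = \lambda^{-(r-2)/r} y_i^{(r-2)/r} y_j$, and summing while using $\sum_{j \sim i} y_j = \lambda y_i$ collapses the expression to $\lambda^{2/r} y_i^{2(r-1)/r} = \lambda^{2/r} x_i^{r-1}$. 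This proves the first assertion.

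For the spectral-radius statement I then specialize to $\lambda = \rho(G)$. Assuming $G$ is connected (otherwise one argues componentwise), the classical Perron--Frobenius theorem for nonnegative matrices yields a strictly positive eigenvector $y$, so the construction above produces a strictly positive $x$ realizing $\rho(G)^{2/r}$ as an eigenvalue of $\mathcal{A}(G^r)$. Since $G^r$ is connected iff $G$ is, $\mathcal{A}(G^r)$ is weakly irreducible, and Theorem \ref{thm:Perron-Frobenius} guarantees that $\rho(G^r)$ is the \emph{unique} eigenvalue admitting a positive eigenvector; consequently $\rho(G^r) = \rho(G)^{2/r}$.

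The main obstacle is really the ansatz rather than the verification: one must see that $x_{i_{e,k}}$ has to be a geometric mean of $x_u, x_v$ scaled by a power of the target eigenvalue (this is forced by the new-vertex equation, where only one hyperedge contributes), and only then does the original-vertex equation pin down the scaling $2/r$ and reduce the whole problem to an instance of the matrix eigen-equation. Once this observation is in hand the remainder is straightforward bookkeeping with fractional exponents.
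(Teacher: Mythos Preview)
The paper does not give its own proof of this statement; it is quoted from \cite{ZhouSunWangBu-EJC-14} as a tool and left unproved here, so there is no in-paper argument to compare against. Your proof is correct and is essentially the construction in the original reference: lift a matrix eigenvector $y$ of $G$ to a tensor eigenvector of $G^r$ by setting $x_i=y_i^{2/r}$ on the old vertices and the appropriately scaled geometric mean on the inserted core vertices, verify the two eigen-equations by direct substitution, and then invoke the tensor Perron--Frobenius theorem (Theorem~\ref{thm:Perron-Frobenius}) on the positive Perron vector to pin down $\rho(G^r)$. The bookkeeping with $r$th roots and the componentwise reduction for disconnected $G$ are handled correctly; nothing is missing.
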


\section{The matching polynomial of hypergraphs}

Let $\mathcal{H}=(V,E)$ be an   $r$-uniform hypergraph of order $n$ and size $m$.    {\em A matching} of $\mathcal{H}$ is a set of pairwise
nonadjacent edges in $E$. A {\em $k$-matching} is  a matching
consisting of $k$ edges. We denote by $m(\mathcal{H},k)$ the number of
$k$-matchings of $\mathcal{H}$. The  {\em matching number} $\nu(\mathcal{H})$ of $\mathcal{H}$ is the maximum cardinality of a matching.

Recently, Zhang et. al ~\cite{Zhang_17} obtained the following result.

\begin{theorem}{\rm (\cite{Zhang_17})} \label{thm_Zhang_matchingpolyradius}
 $\lambda$ is a nonzero eigenvalue of a supertree $\mathcal{H}$ with the corresponding
eigenvector $x$ having all elements nonzero if and only if it is a root of the polynomial
\begin{equation*}\label{e_matchenergy}
\varphi(\mathcal{H},x)=\sum\limits_{k=0}^{\nu(\mathcal{H})}(-1)^{k}m(\mathcal{H},k)x^{(\nu(\mathcal{H})-k)r}.
\end{equation*}
\end{theorem}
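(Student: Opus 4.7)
The plan is to induct on the number of edges $m$ of the supertree $\mathcal{H}$, mirroring the classical proof that for an ordinary tree the characteristic polynomial equals the matching polynomial. The base cases are direct. For $m=0$, $\varphi(\mathcal{H},x)\equiv 1$ has no root, while any candidate eigenvector would satisfy $\lambda x_v^{r-1}=0$, forcing a zero coordinate as soon as $\lambda\neq 0$. For $m=1$, $\mathcal{H}$ is a single edge $\{v_1,\dots,v_r\}$; multiplying the $i$th eigenvalue equation by $x_{v_i}$ gives $\lambda x_{v_i}^r=\prod_j x_{v_j}$ independent of $i$, which forces $\lambda^r=1$, matching the roots of $\varphi(\mathcal{H},x)=x^r-1$.

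For the inductive step I would pick a pendant edge $e=\{v_1,\dots,v_{r-1},u\}$ (which exists at either end of a longest path), with $v_1,\dots,v_{r-1}$ core and $u$ the intersection vertex. The counting identity $m(\mathcal{H},k)=m(\mathcal{H}\setminus e,k)+m(\mathcal{H}-V(e),k-1)$, together with $\nu(\mathcal{H}-V(e))=\nu(\mathcal{H})-1$ (every maximum matching can be arranged to contain the pendant edge $e$), yields the polynomial recurrence
\[
\varphi(\mathcal{H},x)=x^{\alpha r}\,\varphi(\mathcal{H}\setminus e,x)\;-\;\varphi(\mathcal{H}-V(e),x),\qquad \alpha=\nu(\mathcal{H})-\nu(\mathcal{H}\setminus e)\in\{0,1\}.
\]
On the linear-algebraic side, the eigenvalue equations at $v_1,\dots,v_{r-1}$ give $\lambda x_{v_i}^r=x_u\prod_j x_{v_j}$ independently of $i$, so $x_{v_1}^r=\dots=x_{v_{r-1}}^r$ and a short manipulation yields $\prod_j x_{v_j}=x_u^{r-1}/\lambda^{r-1}$. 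Substituting back into the equation at $u$ absorbs the pendant contribution into a modified self-coefficient $(\lambda^r-1)/\lambda^{r-1}$ on $x_u^{r-1}$. For ($\Rightarrow$), applying the induction hypothesis to the reduced superforests $\mathcal{H}\setminus e$ and $\mathcal{H}-V(e)$ (each with strictly fewer edges, after discarding now-isolated vertices that do not affect matchings) and combining via the polynomial recurrence forces $\varphi(\mathcal{H},\lambda)=0$. For ($\Leftarrow$), given a nonzero root $\lambda$ of $\varphi(\mathcal{H},x)$, I would use the recurrence to locate a nonzero root of one of the reduced polynomials, invoke induction to obtain an all-nonzero eigenvector on the reduced supertree, and extend by setting $x_{v_i}=x_u/\lambda$; the hypothesis $\lambda\neq 0$ ensures the new coordinates are nonzero.

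The main obstacle will be reconciling the linear-algebraic reduction with the polynomial-combinatorial one: the modified equation at $u$, with the $\lambda$-dependent self-coefficient $(\lambda^r-1)/\lambda^{r-1}$, is \emph{not} literally the eigenvalue equation of any sub-supertree, so one cannot merely cite the induction hypothesis on a reduced object. Making the induction pass cleanly requires a careful case split on the value of $\alpha$ (equivalently, whether the pendant edge is essential for a maximum matching) and on whether $u$ survives as a non-isolated vertex of the relevant reduced hypergraph, with separate arguments in each sub-case to align the scalar factor $(\lambda^r-1)/\lambda^{r-1}$ with the prefactor $x^{\alpha r}$ appearing in the polynomial recurrence.
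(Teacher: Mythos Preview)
The paper does not prove this theorem: it is quoted verbatim from Zhang, Kang, Shan and Bai \cite{Zhang_17} and used as a black box, so there is no proof here to compare against. That said, your proposal has a genuine gap that a case split on $\alpha$ will not repair.

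Your base cases and the derivation of the pendant--edge relations are correct: after eliminating the core vertices you indeed obtain
\[
\frac{\lambda^r-1}{\lambda^{r-1}}\,x_u^{r-1}=(\mathcal{A}(\mathcal{H}')x)_u,
\qquad
\lambda x_w^{r-1}=(\mathcal{A}(\mathcal{H}')x)_w\ \ (w\neq u),
\]
where $\mathcal{H}'$ is $\mathcal{H}$ with the pendant edge and its core vertices removed. The difficulty is that the induction hypothesis you carry --- ``$\lambda$ is an eigenvalue with an all--nonzero eigenvector iff $\varphi=0$'' --- says nothing about this mixed system. In the $(\Rightarrow)$ direction, $x|_{\mathcal{H}'}$ is \emph{not} an eigenvector of $\mathcal{H}'$ for the eigenvalue $\lambda$, so you cannot conclude $\varphi(\mathcal{H}\setminus e,\lambda)=0$; and the polynomial recurrence $\varphi(\mathcal{H},\lambda)=\lambda^{\alpha r}\varphi(\mathcal{H}\setminus e,\lambda)-\varphi(\mathcal{H}-V(e),\lambda)$ is useless unless you know both terms on the right. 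In the $(\Leftarrow)$ direction the same recurrence only gives $\lambda^{\alpha r}\varphi(\mathcal{H}\setminus e,\lambda)=\varphi(\mathcal{H}-V(e),\lambda)$, which forces neither side to vanish, so you cannot invoke the hypothesis on a smaller supertree to produce an eigenvector to extend. Splitting on $\alpha\in\{0,1\}$ does not change this: in either case the equation at $u$ still fails to be the eigenvalue equation of any sub--supertree, so the scalar factor $(\lambda^r-1)/\lambda^{r-1}$ cannot be ``aligned'' with $\lambda^{\alpha r}$.

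The standard remedy is to strengthen the induction hypothesis so that it \emph{does} speak about the mixed system. Root the supertree at an arbitrary vertex $u$ and prove, by induction on the number of edges: for every nonzero $\lambda$ with $\varphi(\mathcal{H}-u,\lambda)\neq 0$ there is an all--nonzero vector $x$ satisfying the eigenvalue equations at every vertex except possibly $u$, and any such vector obeys
\[
\lambda x_u^{r}-\sum_{e\ni u}\prod_{w\in e}x_w
\;=\;\frac{\varphi(\mathcal{H},\lambda)}{\varphi(\mathcal{H}-u,\lambda)}\,x_u^{r}.
\]
(For a single edge this is exactly your computation $\lambda-1/\lambda^{r-1}=(\lambda^r-1)/\lambda^{r-1}$, since $\varphi(\mathcal{H}-u,\lambda)=\lambda^{r-1}$ in the paper's normalisation.) The inductive step uses the vertex--deletion recurrence $\varphi(\mathcal{H},x)=x\,\varphi(\mathcal{H}-u,x)-\sum_{e\ni u}\varphi(\mathcal{H}-V(e),x)$ from Theorem~\ref{thm_matchingpoly}(c), together with your pendant computation applied at each edge below $u$. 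The theorem then follows immediately: an all--nonzero eigenvector exists iff the defect at $u$ vanishes iff $\varphi(\mathcal{H},\lambda)=0$.
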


Based on the result above, Clark and Cooper  \cite{ClarkCooper} called the polynomial in Theorem~\ref{thm_Zhang_matchingpolyradius} as matching polynomial of $\mathcal{H}$.    Set $m(\mathcal{H},0)=1$. We redefine the {\em matching polynomial} of $\mathcal{H}$ as
\begin{equation*}
\varphi(\mathcal{H},x)=\sum\limits_{k\geq0}(-1)^{k}m(\mathcal{H},k)x^{n-kr}.
\end{equation*}
For exmaple, the matching polynomial of $N_k$ is $\varphi(N_k,x)=x^k$, rather than 1 by Zhang's definition.
The definition here seems more appropriate as it guarantees that  matching polynomials of hypergraphs of the same order have the same degree and the result in Theorem~\ref{thm_Zhang_matchingpolyradius} is still valid.

Some classical results on matching polynomial of a graph can be extended to  a hypergraph as well. However, the matching polynomial of a hypergraph
 has its own flavour, e.g. as shown  in \cite{ClarkCooper}, the roots of matching polynomial of  an $r$-uniform hypergraph with $r>2$ need not necessarily be real.

\begin{theorem}\label{thm_matchingpoly}
Let $\mathcal{G}$ and $\mathcal{H}$ be two $r$-uniform hypergraphs. Then the following statements hold.
\begin{enumerate}
  \item $\varphi(\mathcal{G}\cup\mathcal{H},x)=\varphi(\mathcal{G},x)\varphi(\mathcal{H},x)$.
\item $\varphi(\mathcal{G},x)=\varphi(\mathcal{G}\setminus e,x)-\varphi(\mathcal{G}-V(e),x)$ if $e$ is an edge of $\mathcal{G}$.
\item If $u\in V(\mathcal{G})$ and $I=\{i | e_i\in E_u\}$, for any $J\subseteq I$, we have
\begin{eqnarray*}
\varphi(\mathcal{G},x)=\varphi(\mathcal{G}\setminus\{e_i : i\in J\},x)-\sum_{i\in J}\varphi(\mathcal{G}-V(e_i),x)
\end{eqnarray*}
and
\begin{eqnarray*}\varphi(\mathcal{G},x)=x\varphi(\mathcal{G}-u,x)-\sum_{e\in E_u}\varphi(\mathcal{G}-V(e),x).
\end{eqnarray*}
\item $\sum_{u\in V(\mathcal{G})}\varphi(\mathcal{G}-u,x)=\frac{d}{dx}\varphi(\mathcal{G},x)$.
\end{enumerate}
\end{theorem}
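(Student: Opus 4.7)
The plan is to prove each of the four identities by the standard combinatorial interpretation of the coefficients of the matching polynomial, adapted to the $r$-uniform setting by keeping careful track of the vertex count that appears in the exponent of $x$. The key bookkeeping observation is that removing the vertex set of an $r$-edge decreases the number of vertices by exactly $r$, which is precisely compensated by the shift in the matching index $k \mapsto k-1$ when the removed edge is put into the matching.

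For part (a) I would observe that every $k$-matching of $\mathcal{G}\cup\mathcal{H}$ decomposes uniquely into an $i$-matching of $\mathcal{G}$ and a $(k-i)$-matching of $\mathcal{H}$, giving the convolution $m(\mathcal{G}\cup\mathcal{H},k)=\sum_{i} m(\mathcal{G},i)\,m(\mathcal{H},k-i)$; since $|V(\mathcal{G}\cup\mathcal{H})|=|V(\mathcal{G})|+|V(\mathcal{H})|$, substituting into the definition yields the product $\varphi(\mathcal{G},x)\varphi(\mathcal{H},x)$. For part (b) I would partition the $k$-matchings of $\mathcal{G}$ according to whether they contain $e$: those not containing $e$ are the $k$-matchings of $\mathcal{G}\setminus e$, while those containing $e$ biject with the $(k-1)$-matchings of $\mathcal{G}-V(e)$. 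Writing $n=|V(\mathcal{G})|$ so that $|V(\mathcal{G}\setminus e)|=n$ and $|V(\mathcal{G}-V(e))|=n-r$, the recurrence $m(\mathcal{G},k)=m(\mathcal{G}\setminus e,k)+m(\mathcal{G}-V(e),k-1)$ translates, after the index shift, into the claimed identity.

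For part (c), note that all edges in $\{e_i : i\in J\}$ share the vertex $u$, so any matching of $\mathcal{G}$ contains at most one of them. Classifying matchings by which $e_i$ (if any) is used gives $m(\mathcal{G},k)=m(\mathcal{G}\setminus\{e_i:i\in J\},k)+\sum_{i\in J} m(\mathcal{G}-V(e_i),k-1)$, which after the same exponent bookkeeping as in (b) yields the first formula. For the second formula, I would take $J=I$; then $\mathcal{G}\setminus\{e_i:i\in I\}$ is $\mathcal{G}-u$ together with $u$ as an isolated vertex, i.e.\ $(\mathcal{G}-u)\cup N_1$, and part (a) combined with $\varphi(N_1,x)=x$ gives the stated identity. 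For part (d) I would double count pairs $(M,u)$ where $M$ is a $k$-matching of $\mathcal{G}$ and $u$ is a vertex not covered by $M$: counting $M$ first yields $(n-kr)\,m(\mathcal{G},k)$, and counting $u$ first yields $\sum_{u}m(\mathcal{G}-u,k)$. Multiplying by $(-1)^k x^{n-1-kr}$ and summing over $k$, the left side becomes $\sum_{u}\varphi(\mathcal{G}-u,x)$ and the right side is exactly the termwise derivative of $\varphi(\mathcal{G},x)$. I do not anticipate a real obstacle beyond the exponent accounting; once that is in hand, each identity reduces to a transparent bijection, and no argument beyond the classical graph case is needed.
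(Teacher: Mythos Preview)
Your proposal is correct and follows essentially the same combinatorial approach as the paper's proof: parts (a), (b), and (d) are argued identically, and for the second formula in (c) both you and the paper specialize to $J=I$ and use that $u$ becomes isolated. The only cosmetic difference is in the first formula of (c): the paper obtains it by iterating (b) edge-by-edge (using that each $e_j$ contains $u\in V(e_i)$, so $(\mathcal{G}\setminus e_j)-V(e_i)=\mathcal{G}-V(e_i)$), whereas you give the one-shot classification by which $e_i\in J$, if any, lies in the matching; both routes are equally valid and equally short.
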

\begin{proof}
(a) From the fact that each $k$-matching in $\mathcal{G}\cup\mathcal{H}$ consists of an $s$-matching in $\mathcal{G}$  combined with a  $(k-s)$-matching from $\mathcal{H}$  for some $s$, the result  follows immediately.

(b) In order to compute the matching polynomial, we count the number of $k$-matching in  $\mathcal{G}$ according to the edge $e$ being contained  or not. The number of $k$-matching not containing $e$ is equal to $m(\mathcal{G}-e, k)$. The number of $k$-matching containing $e$ is equal to $m(\mathcal{G}-V(e),k-1)$. Thus we have
\begin{align*}
  m(\mathcal{G}, k)=m(\mathcal{G}-e, k)+m(\mathcal{G}-V(e), k-1).
\end{align*}
By comparing the coefficients of the corresponding matching polynomial in two sides of (b), the result follows.

(c) Assume that $\{e_i\}_{i\in J}=\{e_1,\ldots,e_s\}$.  Applying (b) of Theorem~\ref{thm_matchingpoly}, we have
\begin{align*}
\varphi(\mathcal{G},x)&=\varphi(\mathcal{G}\setminus e_1,x)-\varphi(\mathcal{G}-V(e_1),x)\nonumber\\
&=\varphi(\mathcal{G}\setminus \{e_1,e_2\},x)-\varphi(\mathcal{G}-V(e_2),x)-\varphi(\mathcal{G}-V(e_1),x)
\end{align*}
Repeatedly using (b) of Theorem~\ref{thm_matchingpoly}, we get
\begin{align}\label{e-matchpoly-delet-somedeges}
\varphi(\mathcal{G},x)&=\varphi(\mathcal{G}\setminus\{e_1,e_2,\ldots, e_s\},x)-\sum_{i=1}^s\varphi(\mathcal{G}-V(e_i),x)\nonumber\\
&=\varphi(\mathcal{G}\setminus\{e_i : i\in J\},x)-\sum_{i\in J}\varphi(\mathcal{G}-V(e_i),x).
\end{align}

\noindent
Note that $u$ is an isolated vertex of $\mathcal{G}-\cup_{i\in I}e_i$,   it follows directly from ~\eqref{e-matchpoly-delet-somedeges} that
\begin{align*}
\varphi(\mathcal{G},x)&=x\varphi(\mathcal{G}-u,x)-\sum_{e_i\in E_u(G)}\varphi(\mathcal{G}-V(e_i),x).
\end{align*}

(d) Consider the ordered pairs $(u, M)$, where $M$ is a $k$-matching in $\mathcal{G}$ and $u$ is a vertex of $\mathcal{G}$ not covered by $M$.
 Counting the number of the ordered pairs, we obtain that the number of  such ordered pairs is equal to
$ m(\mathcal{G}, k)(n-rk)$, which is just the absolute value of the coefficient of $x^{n-rk-1}$ in $\frac{d}{dx}\varphi(\mathcal{G},x)$. On the other hand, if we choose a vertex first, say $u$,  then the number of  $k$-matching not covering $u$  is equal to $ m(\mathcal{G}-u, k)$. Then, the number of such ordered pairs is equal to $\sum_{u\in V(\mathcal{G})}m(\mathcal{G}-u, k)$, which is the absolute value of the coefficient of $x^{n-rk-1}$ in $\sum_{u\in V(\mathcal{G})}\varphi(\mathcal{G}-u,x)$.  The desired result follows.
\end{proof}

\begin{prop}\label{prop-tree-powertree-matchpoly}
Let $T$ be an ordinary tree on $n$ vertices,   $r \, (r\geq3)$ a positive integer. Then the matching polynomials of $T$ and its $r$th power $T^r$ satisfy the following relation:
\[
\varphi(T^r,x)=x^{\frac{(n-2)(r-2)}{2}}\varphi(T,x^{\frac{r}{2}}).
\]
\end{prop}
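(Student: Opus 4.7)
The plan is to reduce everything to a direct comparison of coefficients, using the fact that the hypergraph $T^r$ is obtained from $T$ by inflating each edge with $r-2$ fresh core vertices that belong to no other edge. Because these new vertices are distinct for distinct edges of $T$, two hyperedges of $T^r$ intersect if and only if the underlying edges of $T$ intersect, and hence matchings in $T^r$ correspond bijectively to matchings in $T$ with the same number of edges.

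First I would record the numerical data. Since $T$ is a tree on $n$ vertices with $n-1$ edges, the vertex set of $T^r$ has cardinality
\[
N := |V(T^r)| = n + (n-1)(r-2).
\]
Then I would prove the key combinatorial identity
\[
m(T^r,k) = m(T,k) \qquad \text{for every } k\ge 0,
\]
by exhibiting the obvious bijection $M \mapsto \{\, e \cup \{i_{e,1},\dots,i_{e,r-2}\} : e \in M\,\}$ and using the fact that the newly added "core" vertices $i_{e,j}$ depend only on $e$, so they never create new intersections between hyperedges.

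With these two facts in hand, the rest is a short exponent check. By the definition of the matching polynomial,
\[
\varphi(T^r,x) = \sum_{k\ge 0} (-1)^k m(T^r,k)\, x^{N-kr} = \sum_{k\ge 0} (-1)^k m(T,k)\, x^{N-kr},
\]
while
\[
x^{\frac{(n-2)(r-2)}{2}}\varphi(T, x^{r/2}) = \sum_{k\ge 0} (-1)^k m(T,k)\, x^{\frac{(n-2)(r-2)}{2} + \frac{r(n-2k)}{2}}.
\]
So the claim reduces to verifying
\[
\frac{(n-2)(r-2)}{2} + \frac{r(n-2k)}{2} = N - kr,
\]
which after expanding $(n-2)(r-2)$ collapses to the identity $nr-n-r+2-kr = nr-n-r+2-kr$.

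There is no real obstacle here: once the bijection between matchings of $T$ and of $T^r$ is stated cleanly, the rest is bookkeeping. The only thing to be mildly careful about is that the exponent $\tfrac{(n-2)(r-2)}{2}$ in the statement is always a nonnegative integer when considered together with the substitution $x \mapsto x^{r/2}$; this is transparent when one rewrites both sides as $\sum_k(-1)^k m(T,k)\,x^{nr-n-r+2-kr}$, which is the form I would finally present.
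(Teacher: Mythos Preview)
Your proposal is correct and follows essentially the same approach as the paper: both arguments observe that $m(T^r,k)=m(T,k)$ via the obvious bijection on matchings, compute the order $N=n+(n-1)(r-2)$ of $T^r$, and then verify the identity by comparing exponents (the paper does this via the substitution $y=x^{r/2}$, while you check the exponent identity directly). Your write-up is in fact slightly more explicit about why the matching bijection holds.
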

\begin{proof}
It is easy to see that  $m(T,k)=m(T^r,k)$ for any $k$. Let $n'$ denote the order of $T^r$. Then $n'=n+(n-1)(r-2)$. So we have
\begin{align*}
\varphi(T^r,x)&=\sum_{k\geq0}(-1)^km(T^r,k)x^{n'-kr}=\sum_{k\geq0}(-1)^km(T,k)(y^{\frac{2}{r}})^{n'-kr}\\
&=y^{\frac{2n'}{r}-n}\sum_{k\geq0}(-1)^km(T,k)y^{n-2k}=x^{\frac{(n-2)(r-2)}{2}}\varphi(T,x^{\frac{r}{2}}),
\end{align*}
where a new variable $y=x^{\frac{r}{2}}$ is used in the second and third  equations.
\end{proof}

The ordering on forests has been introduced by  Lov\'asz and Pelik\'an in \cite{LovaszPelikan73}. Now we extend the ordering on forests to superforests. Let $\mathcal{T}$ and  $\mathcal{T}'$ be superforests of $n$ vertices.
  We call $\mathcal{T}'\preceq \mathcal{T}$  if $\varphi(\mathcal{T}',x)\geq \varphi(\mathcal{T},x)$ for every $x\geq \rho(\mathcal{T}')$; call $\mathcal{T}'\prec \mathcal{T}$  if  $\mathcal{T}'\preceq \mathcal{T}$ and the polynomial $\varphi(\mathcal{T}',x)-\varphi(\mathcal{T},x)$ does not vanish at the point $x=\rho(\mathcal{T}')$.  Note that $\mathcal{T}'\prec \mathcal{T}$ ($\mathcal{T}'\preceq \mathcal{T}$, resp.) implies $\rho(\mathcal{T}')< \rho(\mathcal{T})$ ($\rho(\mathcal{T}')\leq\rho(\mathcal{T})$, resp.).

 \begin{remark}
From (a) of Theorem~\ref{thm_matchingpoly}, it is easily seen  that if $\mathcal{T}'\preceq \mathcal{T}$ ($\mathcal{T}'\prec  \mathcal{T}$, resp.), then $\mathcal{T}'\cup \mathcal{H}\preceq \mathcal{T}\cup \mathcal{H}$ ($\mathcal{T}'\cup \mathcal{H}\prec  \mathcal{T}\cup \mathcal{H}$, resp.) for any superforest $\mathcal{H}$.
\end{remark}

\section{Grafting transformations on uniform supertrees}

Li and Feng \cite{LiFeng79}  investigated how the spectral radius change when  a certain transformation is applied to the graph, and obtained the following result.

\begin{theorem}{\rm (\cite{LiFeng79})}\label{thm-lifeng79}
Let $u,v$  be two vertices of $G$ such that $d(u,v)=m$. Let $G(u,v;p,q)$ denote the graph obtained from $G$ by attaching a path of length $p$ at $u$ and a path of length $q$ at $v$. Then $\rho(G(u,v;p,q))>\rho(G(u,v;p+1,q-1))$ under any of the following conditions
\begin{enumerate}
  \item $m=0$, $deg(u)\geq1$, and $p\geq q\geq1$;
  \item $m=1$, $deg(u)\geq2$, $deg(v)\geq2$ and $p\geq q\geq1$;
  \item $m>1$, $deg(u)\geq2$, $deg(v)\geq2$, $p-q\geq m$ and $q\geq1$.
\end{enumerate}
\end{theorem}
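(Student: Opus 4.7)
My plan is to compare the characteristic polynomials of the two graphs directly. Set $\Delta(x) := \phi(G(u,v;p,q),x) - \phi(G(u,v;p+1,q-1),x)$, and write $\rho' := \rho(G(u,v;p+1,q-1))$. If I can show $\Delta(\rho')<0$, then from $\phi(G(u,v;p+1,q-1),\rho')=0$ I obtain $\phi(G(u,v;p,q),\rho')<0$; since the characteristic polynomial of a graph is monic with real roots and positive past its largest root, the intermediate value theorem places a root of $\phi(G(u,v;p,q),\cdot)$ strictly past $\rho'$, giving the desired $\rho(G(u,v;p,q))>\rho'$.

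The main tool is the pendant-path attachment formula: if $H_k$ denotes $H$ with a pendant path of length $k$ attached at a vertex $w$, then $\phi(H_k,x)=U_k(x)\phi(H,x)-U_{k-1}(x)\phi(H-w,x)$, where $U_k$ is the characteristic polynomial of the path $P_k$, satisfying $U_{k+1}=xU_k-U_{k-1}$ with $U_0=1$ and $U_1=x$ (proved by induction on $k$, expanding at the free end of the pendant path). For case (a), $m=0$, I apply this formula twice at $u=v$ and subtract; invoking the Chebyshev identities $U_pU_q-U_{p+1}U_{q-1}=U_{p-q}$ and $U_{p-1}U_q-U_{p+1}U_{q-2}=xU_{p-q}$ (both verified via the substitution $U_k(2\cos\theta)=\sin((k+1)\theta)/\sin\theta$ and the sine product-to-sum formulas), the difference collapses to
\[
\Delta(x)=U_{p-q}(x)\bigl[\phi(G,x)-x\phi(G-u,x)\bigr].
\]
The standard vertex-deletion recurrence identifies the bracketed factor as $-\sum_{w\sim u}\phi(G-u-w,x)-2\sum_{Z\ni u}\phi(G-V(Z),x)$, which at $\rho'$ is strictly negative: the first sum is nonempty thanks to $\deg(u)\geq 1$, and every deleted subgraph is a proper subgraph of $G(u,u;p+1,q-1)$, so its characteristic polynomial at $\rho'$ is positive. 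Meanwhile $U_{p-q}(\rho')>0$ because $P_{p-q}$ sits as a proper subgraph of $G(u,u;p+1,q-1)$, forcing $\rho'>\rho(P_{p-q})$. Hence $\Delta(\rho')<0$ as required.

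For cases (b) and (c), I repeat the maneuver with the two paths attached at distinct vertices $u,v$ at distance $m\geq 1$. The expressions for the characteristic polynomials now involve $\phi(G,x)$, $\phi(G-u,x)$, $\phi(G-v,x)$, and $\phi(G-u-v,x)$, augmented for $m>1$ by the polynomials obtained by deleting intermediate vertices on a shortest $u$--$v$ path. After an analogous round of Chebyshev simplifications, $\Delta(x)$ should factor as $U_{p-q-m}(x)$ multiplied by a linear combination of these polynomials; a second round of vertex-deletion expansion at $u$ and at $v$, combined with the hypotheses $\deg(u)\geq 2$ and $\deg(v)\geq 2$, converts that combination into a sum of terms of the form (positive constant)\,$\times$\,(characteristic polynomial of a proper subgraph at $\rho'$), and so is negative. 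The inequality $p-q\geq m$ in case (c) is exactly what is needed to secure $U_{p-q-m}(\rho')>0$ via the same proper-subgraph containment argument. The main obstacle is precisely this bookkeeping for cases (b) and (c): organizing the proliferating Chebyshev products $U_aU_b-U_{a'}U_{b'}$ (with $a+b=a'+b'$) so that they collapse cleanly into a single factor $U_{p-q-m}(x)$, and pinpointing the remainder whose sign is controlled by $\deg(u),\deg(v)\geq 2$. Once the factorization is secured, the sign analysis and intermediate value argument proceed exactly as in case (a).
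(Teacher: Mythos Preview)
The paper does not actually prove this theorem; it is quoted from Li and Feng and then \emph{generalized} to supertrees in Theorems~\ref{thm_graftat1vertex}, \ref{thm_graftat2adjvertex}, \ref{thm_graftat2vertex-dist-s}. The method used there is different from yours: rather than a closed Chebyshev factorization, the paper reduces $\varphi(\cdot\,;p,q)-\varphi(\cdot\,;p+1,q-1)$ to $\varphi(\cdot\,;p-1,q-1)-\varphi(\cdot\,;p,q-2)$ and iterates down to $q=1$, and for distance $s>1$ it runs an induction on $s$. Your case~(a) is correct and in fact more explicit than this iterative route: the identity $\Delta(x)=U_{p-q}(x)\bigl[\phi(G,x)-x\phi(G-u,x)\bigr]$ holds and the sign analysis via Schwenk's formula is sound.

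Your plan for (b) and (c), however, is built on a factorization that does not materialize. Carrying out exactly your computation for any $u\ne v$ yields, independently of $m=d(u,v)$,
\[
\Delta(x)=U_{p-q}\bigl[\phi(G)+\phi(G-u-v)\bigr]-U_{p-q+1}\,\phi(G-v)-U_{p-q-1}\,\phi(G-u),
\]
with three distinct Chebyshev coefficients, not a single factor $U_{p-q-m}$. A quick sanity check already kills the claim: case~(b) allows $p=q$, where your proposed factor would be $U_{-1}=0$, forcing $\Delta\equiv 0$ and hence $\rho(G(u,v;p,p))=\rho(G(u,v;p+1,p-1))$, contrary to the theorem. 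For $m=1$ the displayed expression \emph{can} be handled, but only by rewriting it in two ways via $U_{p-q\pm1}=xU_{p-q}-U_{p-q\mp1}$ and then splitting on the sign of $\phi(G-v,\rho')-\phi(G-u,\rho')$; it is precisely here that \emph{both} hypotheses $\deg(u)\ge2$ and $\deg(v)\ge2$ enter, each controlling one branch. For $m>1$ the Schwenk expansion at $v$ no longer absorbs the term $\phi(G-u-v)$ (since $u\not\sim v$), so this trick breaks down and one genuinely needs something like the paper's induction on the distance together with the hypothesis $p-q\ge m$. As written, your sketch for (b) and (c) does not supply a working argument.
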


Since then, the result has been extensively used in spectral perturbation and proved to be efficient in ordering graphs by spectral radius.  The result  above  is proved by comparing characteristic polynomials of graphs. The characteristic polynomial of a hypergraph is  complicated and very little is known about it up to now. However the result of Theorem~\ref{thm_Zhang_matchingpolyradius}  makes it feasible to compare the spectral radii of supertrees by using  the matching polynomials of supertrees.

It is known that for any forest, its matching polynomial and characteristic polynomial coincide. Following a similar proof of Lemma 4 in \cite{LovaszPelikan73}, the following result can be obtained.
\begin{prop}\label{prop_pa-pb-matchpoly}
If $a+b=c+d$, $a<c\leq d$, then  $ P_a\cup P_b\prec P_c\cup P_d$.
\end{prop}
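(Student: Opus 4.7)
The plan is to adapt the proof of Lemma~4 in \cite{LovaszPelikan73} to the matching polynomial defined here, the key tool being the second-order recurrence for the matching polynomial of a path. For an ordinary path, part~(c) of Theorem~\ref{thm_matchingpoly} applied at a pendant vertex gives $\varphi(P_n,x)=x\varphi(P_{n-1},x)-\varphi(P_{n-2},x)$ directly, and a short induction on $a$ then yields the Chebyshev-type identity
\begin{equation*}
\varphi(P_a,x)\varphi(P_b,x)-\varphi(P_{a-1},x)\varphi(P_{b+1},x)=\varphi(P_{b-a-1},x)
\end{equation*}
for $0\le a\le b$ (with the convention $\varphi(P_{-1},x):=1$). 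The hypotheses $a+b=c+d$ and $a<c\le d$ force $a<c\le d<b$, so I can telescope the identity along the chain of single-step balance moves $(a,b)\to(a+1,b-1)\to\dots\to(c,d)$, obtaining a representation of the difference $\varphi(P_c,x)\varphi(P_d,x)-\varphi(P_a,x)\varphi(P_b,x)$ as a finite sum of matching polynomials $\varphi(P_k,x)$ with indices $k<b$.

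To convert this into the required $\prec$-comparison, I would then verify that the sum has constant sign on the interval $x\ge\rho(P_a\cup P_b)$ and does not vanish at the endpoint. Since each index $k$ in the sum satisfies $k<b$, we have $\rho(P_k)<\rho(P_b)=\rho(P_a\cup P_b)$; and by Proposition~\ref{prop-tree-powertree-matchpoly} the matching polynomial of any forest is, up to a monomial factor and a substitution, the characteristic polynomial of an ordinary tree, so it has only real roots and is strictly positive beyond its spectral radius. In particular each $\varphi(P_k,x)$ is strictly positive at $x=\rho(P_a\cup P_b)$, so the telescoped sum is a positive combination there, and the desired strict comparison follows from the definition of $\prec$.

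If the $P_a$'s are instead to be interpreted as loose paths $P_a^{r}$, the same argument applies with the analogous recurrence $\varphi(P_n^{r},x)=x^{r-1}\varphi(P_{n-1}^{r},x)-x^{r-2}\varphi(P_{n-2}^{r},x)$ from part~(c) of Theorem~\ref{thm_matchingpoly}; even more efficiently, one can pull the ordinary-path comparison through the substitution $y=x^{r/2}$ of Proposition~\ref{prop-tree-powertree-matchpoly}, matching the two $\prec$-domains via $\rho(P_a^{r}\cup P_b^{r})=\rho(P_a\cup P_b)^{2/r}$ from Theorem~\ref{lem_eiggraph-eigpowergraph}. I expect the principal obstacle to be the sign bookkeeping of the telescoping -- keeping the orientations of the single-step identities consistent and checking that each intermediate pair in the chain lies in the range where the Chebyshev-type identity applies -- rather than any deeper technical issue, since once that identity is in hand the remaining steps are routine.
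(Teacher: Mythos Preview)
Your approach coincides with the paper's: its entire proof is the sentence ``Following a similar proof of Lemma~4 in \cite{LovaszPelikan73}, the following result can be obtained,'' and your plan --- the path recurrence from Theorem~\ref{thm_matchingpoly}(c), the identity
\[
\varphi(P_a,x)\varphi(P_b,x)-\varphi(P_{a-1},x)\varphi(P_{b+1},x)=\varphi(P_{b-a-1},x),
\]
and telescoping along single balance moves --- is exactly that Lov\'asz--Pelik\'an argument. The identity is correct with your convention $\varphi(P_{-1},x)=1$.

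The ``sign bookkeeping'' you flagged is, however, a genuine gap rather than cosmetic. Your telescoping gives $\varphi(P_c\cup P_d,x)-\varphi(P_a\cup P_b,x)>0$, and under the paper's definition of $\preceq$ (where $\mathcal{T}'\preceq\mathcal{T}$ means $\varphi(\mathcal{T}',x)\ge\varphi(\mathcal{T},x)$ for $x\ge\rho(\mathcal{T}')$) this yields $P_c\cup P_d\prec P_a\cup P_b$, the \emph{reverse} of the displayed relation. In fact the displayed direction cannot hold at all: $a<c\le d$ and $a+b=c+d$ force $b>d$, whence $\rho(P_a\cup P_b)=\rho(P_b)>\rho(P_d)=\rho(P_c\cup P_d)$, contradicting the remark immediately after the definition that $\mathcal{T}'\prec\mathcal{T}$ implies $\rho(\mathcal{T}')<\rho(\mathcal{T})$. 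So Propositions~\ref{prop_pa-pb-matchpoly} and~\ref{prop_powertree-pa-pb-matchpoly} are printed with $\prec$ oriented the wrong way; your computation actually establishes the correct inequality, not the one written.

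Once you aim for the corrected relation $P_c\cup P_d\prec P_a\cup P_b$, there is a second point to tidy. The definition now demands positivity on $x\ge\rho(P_c\cup P_d)=\rho(P_d)$, not merely on $x\ge\rho(P_b)$ as you check. Since the telescoped indices reach $b-a-3$, which can exceed $d$, individual summands $\varphi(P_k,x)$ need not stay positive on $[\rho(P_d),\rho(P_b))$. The cleanest repair is to argue one balance step at a time: for $(i,j)\to(i+1,j-1)$ the single summand $\varphi(P_{j-i-3},x)$ has index $<j-1$, so it is positive for $x\ge\rho(P_{j-1})$, giving $P_{i+1}\cup P_{j-1}\prec P_i\cup P_j$; then track the thresholds carefully as you chain from $(c,d)$ up to $(a,b)$.
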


Based on Propositions ~\ref{prop-tree-powertree-matchpoly} and \ref{prop_pa-pb-matchpoly}, the corresponding result for hypertree can be easily obtained.

\begin{prop}\label{prop_powertree-pa-pb-matchpoly}
If $a+b=c+d$, $a<c\leq d$, $r (r\geq3)$ is an integer, then $ P_a^r\cup P_b^r\prec P_c^r\cup P_d^r$.
\end{prop}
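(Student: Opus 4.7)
The plan is to lift the ordinary-graph statement of Proposition~\ref{prop_pa-pb-matchpoly} to the hyper-setting by means of Proposition~\ref{prop-tree-powertree-matchpoly}, which expresses the matching polynomial of the $r$th power of a tree as a monomial shift of the substituted matching polynomial of the tree. Since $P_a$ and $P_b$ are each trees, the formula applies componentwise, and the multiplicativity in part (a) of Theorem~\ref{thm_matchingpoly} then produces a clean identity between $\varphi(P_a^r\cup P_b^r,x)$ and $\varphi(P_a\cup P_b,x^{r/2})$.

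Concretely, I would first compute, using Theorem~\ref{thm_matchingpoly}(a) and Proposition~\ref{prop-tree-powertree-matchpoly},
\[
\varphi(P_a^r\cup P_b^r,x)=\varphi(P_a^r,x)\,\varphi(P_b^r,x)=x^{\frac{(a-1)(r-2)}{2}}\varphi(P_a,x^{r/2})\cdot x^{\frac{(b-1)(r-2)}{2}}\varphi(P_b,x^{r/2}),
\]
which collapses to $x^{(a+b-2)(r-2)/2}\,\varphi(P_a\cup P_b,x^{r/2})$. An identical manipulation gives $\varphi(P_c^r\cup P_d^r,x)=x^{(c+d-2)(r-2)/2}\,\varphi(P_c\cup P_d,x^{r/2})$. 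Because $a+b=c+d$, the two prefactors are the same nonnegative monomial, so the difference of matching polynomials factors as
\[
\varphi(P_a^r\cup P_b^r,x)-\varphi(P_c^r\cup P_d^r,x)=x^{\frac{(a+b-2)(r-2)}{2}}\Bigl[\varphi(P_a\cup P_b,x^{r/2})-\varphi(P_c\cup P_d,x^{r/2})\Bigr].
\]

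The remaining task is to check that the inequality supplied by Proposition~\ref{prop_pa-pb-matchpoly} at the graph level transports through the substitution $y=x^{r/2}$ to exactly the range of $x$ required by the definition of $\prec$. This is where Theorem~\ref{lem_eiggraph-eigpowergraph} comes in: since $P_a^r\cup P_b^r=(P_a\cup P_b)^r$, it yields $\rho(P_a^r\cup P_b^r)=\rho(P_a\cup P_b)^{2/r}$. Hence $x\geq\rho(P_a^r\cup P_b^r)$ is equivalent to $x^{r/2}\geq \rho(P_a\cup P_b)$, and under this condition Proposition~\ref{prop_pa-pb-matchpoly} gives $\varphi(P_a\cup P_b,x^{r/2})\geq \varphi(P_c\cup P_d,x^{r/2})$, strictly at $x=\rho(P_a^r\cup P_b^r)$. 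Noting that $x>0$ there and $a+b\geq 2$ (so the monomial prefactor is strictly positive), the claimed inequality and strictness for $P_a^r\cup P_b^r\prec P_c^r\cup P_d^r$ follow at once.

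There is no serious obstacle; the only point that deserves care is the identification $\rho(P_a^r\cup P_b^r)=\rho(P_a\cup P_b)^{2/r}$, which is needed to ensure that the substitution $y=x^{r/2}$ maps the relevant tail $[\rho(P_a^r\cup P_b^r),\infty)$ precisely onto $[\rho(P_a\cup P_b),\infty)$; without this alignment the transfer of $\prec$ through the monomial factor would be unjustified at the endpoint.
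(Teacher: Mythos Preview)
Your proof is correct and follows exactly the route the paper intends: the paper's own argument is the single sentence ``Based on Propositions~\ref{prop-tree-powertree-matchpoly} and~\ref{prop_pa-pb-matchpoly}, the corresponding result for hypertree can be easily obtained,'' and you have supplied the details behind that sentence. Your explicit invocation of Theorem~\ref{lem_eiggraph-eigpowergraph} to align $\rho(P_a^r\cup P_b^r)$ with $\rho(P_a\cup P_b)^{2/r}$ is a point the paper leaves implicit but which is indeed needed to transport the ordering $\prec$ through the substitution $y=x^{r/2}$.
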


\begin{theorem}\label{thm_subgraphmatchpoly}
If $\mathcal{T}$  is an  uniform supertree,   and $\mathcal{T}'$ is a proper partial hypergraph of $\mathcal{T}$ with $V(\mathcal{T}')=V(\mathcal{T})$, then $\mathcal{T}'\prec\mathcal{T}$.
\end{theorem}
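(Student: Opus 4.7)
The plan is to prove $\mathcal{T}'\prec\mathcal{T}$ by induction on $s:=|E(\mathcal{T})|-|E(\mathcal{T}')|$, the number of deleted edges, using the edge-deletion identity of Theorem~3.2(b) as the driving tool, together with the spectral-radius monotonicity of Theorem~2.4, the product formula of Theorem~3.2(a), and the union-preservation Remark after Proposition~4.1.

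For the base case $s=1$, write $\mathcal{T}'=\mathcal{T}\setminus e$. Theorem~3.2(b) gives
\[
\varphi(\mathcal{T}',x)-\varphi(\mathcal{T},x)=\varphi(\mathcal{T}-V(e),x).
\]
Because $\mathcal{T}$ is acyclic, $\mathcal{T}'$ decomposes into exactly $r$ supertree components $\mathcal{T}'_1,\ldots,\mathcal{T}'_r$, each containing a distinct vertex $v_i$ of $e$, and $\mathcal{T}-V(e)=\bigsqcup_{i=1}^{r}(\mathcal{T}'_i-v_i)$. In every nontrivial component $v_i$ is incident to at least one edge, so Theorem~2.4 yields $\rho(\mathcal{T}'_i-v_i)<\rho(\mathcal{T}'_i)$; taking the maximum over $i$ gives $\rho(\mathcal{T}-V(e))<\rho(\mathcal{T}')$ (the degenerate case $\mathcal{T}=\{e\}$ giving $\varphi(\mathcal{T}-V(e),x)\equiv 1$ directly). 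Since the matching polynomial of a superforest factors over components by Theorem~3.2(a), and that of each supertree component is strictly positive above its spectral radius (by Theorem~3.1 together with Perron--Frobenius, which guarantees a simple Perron root), we conclude $\varphi(\mathcal{T}-V(e),x)>0$ for every $x\geq\rho(\mathcal{T}')$, establishing $\mathcal{T}'\prec\mathcal{T}$.

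For the inductive step $s\geq 2$, pick any $e\in E(\mathcal{T})\setminus E(\mathcal{T}')$ and set $\mathcal{T}''=\mathcal{T}\setminus e$. The base case applied to $(\mathcal{T},\mathcal{T}'')$ yields the exact identity $\varphi(\mathcal{T}'',x)-\varphi(\mathcal{T},x)=\varphi(\mathcal{T}-V(e),x)$. The superforest $\mathcal{T}''$ is a disjoint union of supertrees each with strictly fewer edges than $\mathcal{T}$, and $\mathcal{T}'$ is a proper spanning partial of $\mathcal{T}''$. Applying the induction hypothesis componentwise and iterating the union-preservation Remark yields $\mathcal{T}'\prec\mathcal{T}''$. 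Then the decomposition
\[
\varphi(\mathcal{T}',x)-\varphi(\mathcal{T},x)=\bigl[\varphi(\mathcal{T}',x)-\varphi(\mathcal{T}'',x)\bigr]+\varphi(\mathcal{T}-V(e),x)
\]
reduces the task to showing both summands are nonnegative on $[\rho(\mathcal{T}'),\infty)$ with strict inequality somewhere at $x=\rho(\mathcal{T}')$.

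The main obstacle is the second summand: $\varphi(\mathcal{T}-V(e),x)\geq 0$ on $[\rho(\mathcal{T}'),\infty)$ is equivalent to $\rho(\mathcal{T}-V(e))\leq\rho(\mathcal{T}')$, which can fail for a careless choice of $e$ because $\mathcal{T}-V(e)$ may retain a large sub-supertree of $\mathcal{T}$ carrying edges of $E(\mathcal{T})\setminus E(\mathcal{T}')$ whose spectral radius exceeds $\rho(\mathcal{T}')$. I plan to address this by selecting $e$ strategically as an edge incident to a Perron component $\mathcal{T}_*$ of $\mathcal{T}'$---such an $e$ exists because $\mathcal{T}$ is connected while $\mathcal{T}'$ is not---so that the ``large'' side of $\mathcal{T}-V(e)$ can be identified, through Theorem~2.4 applied componentwise, with a partial hypergraph of an appropriate component of $\mathcal{T}'$. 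Should the single-edge strategic choice still fall short, an alternative is to iterate Theorem~3.2(b) to obtain a telescoping expansion $\varphi(\mathcal{T}',x)-\varphi(\mathcal{T},x)=\sum_{j=1}^{s}\varphi((\mathcal{T}\setminus\{e_1,\ldots,e_{j-1}\})-V(e_j),x)$ and to order the deletions so that each summand is (or factors through) a partial hypergraph of $\mathcal{T}'$, thereby making the cumulative sum nonnegative on the required range.
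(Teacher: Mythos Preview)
Your base case $s=1$ is correct and is exactly the paper's proof: the paper writes ``without loss of generality $\mathcal{T}'=\mathcal{T}\setminus e$'' and then argues, via Theorem~2.4, that $\rho(\mathcal{T}-V(e))<\rho(\mathcal{T}')$, so $\varphi(\mathcal{T}-V(e),x)>0$ for $x\geq\rho(\mathcal{T}')$. Your component-by-component verification of that strict inequality is in fact more careful than the paper's bare citation of Theorem~2.4 (which as stated needs the larger hypergraph to be connected).

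Your inductive step, however, has a genuine gap: neither proposed strategy can be completed. Take $\mathcal{T}=P_{200}^r$ and delete every second edge on the right half, say $\mathcal{T}'=\mathcal{T}\setminus\{e_{51},e_{53},\ldots,e_{199}\}$. Then the Perron component of $\mathcal{T}'$ is the initial segment $e_1,\ldots,e_{50}$, so $\rho(\mathcal{T}')=\rho(P_{50}^r)$. For \emph{any} deleted edge $e_k$ (odd $k\in[51,199]$) one has, up to isolated vertices, $\mathcal{T}-V(e_k)\cong P_{k-2}^r\cup P_{199-k}^r$, and $\max(k-2,199-k)\geq 99>50$; hence $\rho(\mathcal{T}-V(e_k))>\rho(\mathcal{T}')$ and $\varphi(\mathcal{T}-V(e_k),x)<0$ at $x=\rho(\mathcal{T}')$. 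In particular the unique deleted edge adjacent to the Perron component is $e_{51}$, and $\mathcal{T}-V(e_{51})$ contains a $P_{148}^r$: Strategy~1 fails outright. For Strategy~2, whichever deleted edge is placed first in the ordering, the first telescoping summand $\varphi(\mathcal{T}-V(e_{\text{first}}),x)$ is already negative at $x=\rho(\mathcal{T}')$, so no ordering makes every term nonnegative on $[\rho(\mathcal{T}'),\infty)$; the identification you hope for (``each summand factors through a partial hypergraph of $\mathcal{T}'$'') is impossible because each $(\mathcal{T}\setminus\{e_1,\ldots,e_{j-1}\})-V(e_j)$ still retains many edges of $E(\mathcal{T})\setminus E(\mathcal{T}')$. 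The overall difference $\varphi(\mathcal{T}',x)-\varphi(\mathcal{T},x)$ is indeed positive on the required range, but proving it needs a different mechanism than termwise nonnegativity. (The paper, for its part, never spells out the multi-edge reduction beyond the phrase ``without loss of generality''.)
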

\begin{proof}
Without loss of generality, we assume that $\mathcal{T}'=\mathcal{T}\setminus e$ for some $e$ in $\mathcal{T}$. If $x\geq\rho(\mathcal{T}')$, then $x>\rho(\mathcal{T}-V(e))$ by Theorem~\ref{YANGYANG-SIAM-10}. So $\varphi(\mathcal{T}-V(e), x)>0$. Further by Theorem~\ref{thm_matchingpoly},
\begin{align*}
\varphi(\mathcal{T}, x)=\varphi(\mathcal{T}', x)-\varphi(\mathcal{T}-V(e), x)<\varphi(\mathcal{T}', x),
\end{align*}
 the desired result follows.
\end{proof}

\begin{figure}[!hbpt]
\begin{center}
\includegraphics[scale=0.6]{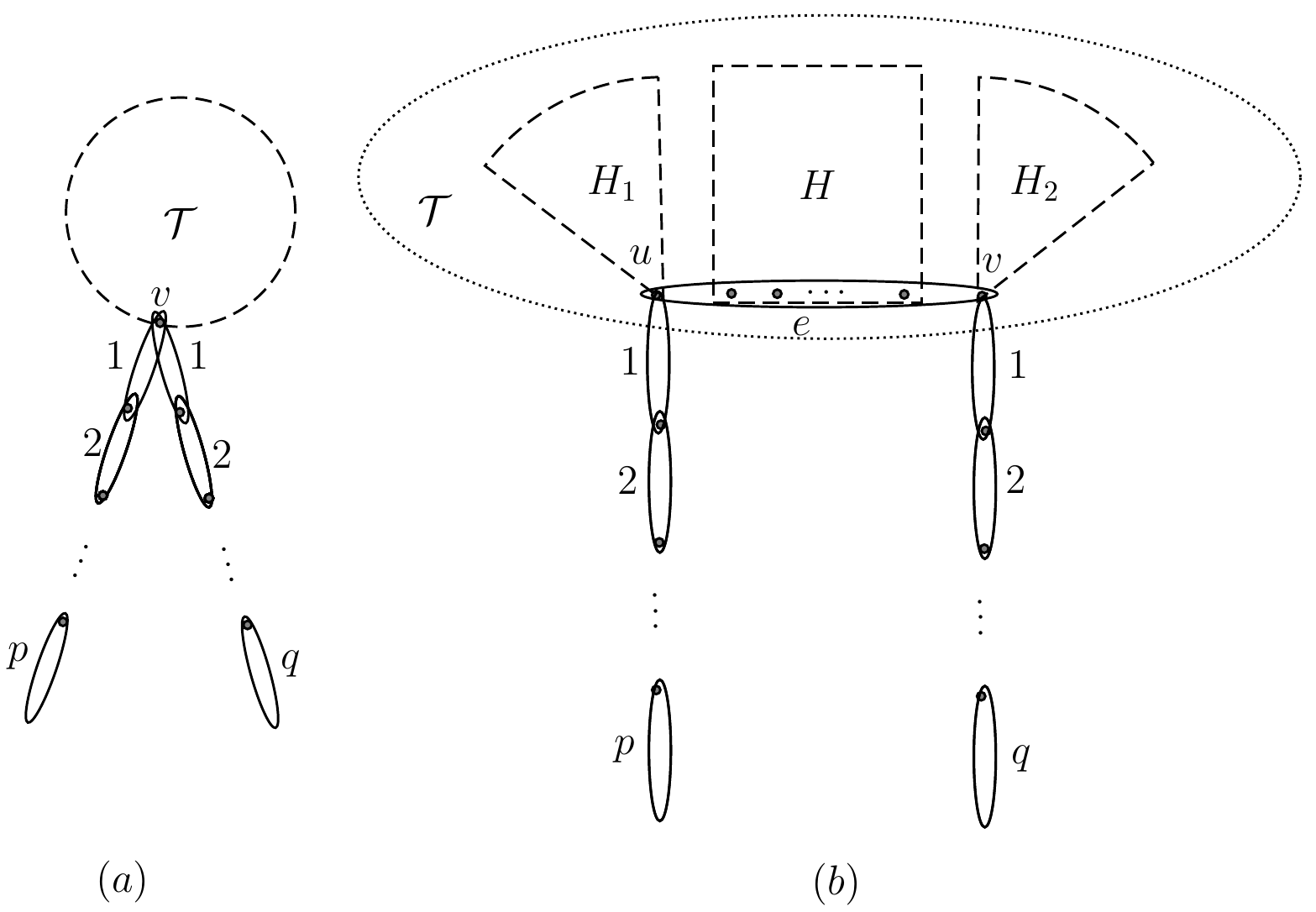}
\caption{Supertrees $(a)\,\,\,\mathcal{T} ( v;p,q)$;\quad$(b)\,\,\,\mathcal{T}^{(1)}(u,v;p,q)$}.
\end{center}\label{figTv43Tuv43}
\end{figure}
Suppose that $\mathcal{T}$  is an  $r$-uniform supertree and $v$ is a vertex in  $\mathcal{T}$.   Let $\mathcal{T}(v;p,q)$ be obtained by attaching two pendent paths of length $p$ and $q$ at $v$ (see  Fig.~1(a)).

\begin{theorem}\label{thm_graftat1vertex}
 If $p\geq q\geq 1$, then $\mathcal{T}(v;p,q)\succ\mathcal{T}(v;p+1,q-1).$
In particular, $\rho(\mathcal{T}(v;p,q))>\rho(\mathcal{T}(v;p+1,q-1)).$
\end{theorem}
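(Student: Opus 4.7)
The plan is to realize both $\mathcal{T}(v;p,q)$ and $\mathcal{T}(v;p+1,q-1)$ as coalescences of $\mathcal{T}$ with the common loose path $P_{p+q}^r$, identifying $v$ with two different interior vertices, and then reduce the comparison of matching polynomials to a single two-factor expression whose sign I can control. My first step is the general coalescence identity
\begin{equation*}
\varphi(\mathcal{G}\cdot\mathcal{H},x)=\varphi(\mathcal{G},x)\varphi(\mathcal{H}-v,x)+\varphi(\mathcal{G}-u,x)\varphi(\mathcal{H},x)-x\,\varphi(\mathcal{G}-u,x)\varphi(\mathcal{H}-v,x),
\end{equation*}
which falls out of one application of Theorem~\ref{thm_matchingpoly}(c) at the identified vertex $w$: the set $E_w$ partitions into $E_u(\mathcal{G})\sqcup E_v(\mathcal{H})$, and $(\mathcal{G}\cdot\mathcal{H})-V(e)$ splits as either $(\mathcal{G}-V(e))\sqcup(\mathcal{H}-v)$ or $(\mathcal{G}-u)\sqcup(\mathcal{H}-V(e))$ depending on which side $e$ lies on.

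Labelling $P_{p+q}^r$ by $w_0,\dots,w_{p+q}$, the identifications $v\leftrightarrow w_p$ and $v\leftrightarrow w_{p+1}$ recover $\mathcal{T}(v;p,q)$ and $\mathcal{T}(v;p+1,q-1)$ respectively, and the splitting $P_{p+q}^r-w_k\cong P_{k-1}^r\cup P_{p+q-k-1}^r\sqcup N_{2(r-2)}$ at an interior $w_k$ gives $\varphi(P_{p+q}^r-w_k,x)=x^{2(r-2)}p_{k-1}p_{p+q-k-1}$, where $p_k=\varphi(P_k^r,x)$. Setting $T=\varphi(\mathcal{T},x)$ and $T'=\varphi(\mathcal{T}-v,x)$ and plugging into the coalescence identity, the common term $T'p_{p+q}$ cancels on subtraction and leaves
\begin{equation*}
\varphi(\mathcal{T}(v;p+1,q-1),x)-\varphi(\mathcal{T}(v;p,q),x)=x^{2(r-2)}(T-xT')\bigl(p_pp_{q-2}-p_{p-1}p_{q-1}\bigr).
\end{equation*}

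A second application of Theorem~\ref{thm_matchingpoly}(c), this time at $v$ inside $\mathcal{T}$, rewrites $T-xT'=-\sum_{e\in E_v(\mathcal{T})}\varphi(\mathcal{T}-V(e),x)$, which is strictly negative at $x=\rho(\mathcal{T}(v;p+1,q-1))$ by Theorem~\ref{YANGYANG-SIAM-10} (each $\mathcal{T}-V(e)$ is a proper partial hypergraph), under the implicit nontriviality hypothesis $\deg_\mathcal{T}(v)\geq 1$. For the second factor, a short induction using the loose-path recursion $p_k=x^{r-1}p_{k-1}-x^{r-2}p_{k-2}$ yields the Cassini-type identity $p_{p-1}p_{q-1}-p_pp_{q-2}=x^{(q-1)(r-2)}p_{p-q-1}(x)$ when $p>q$ and $=x^{(p-2)(r-2)}$ when $p=q$, both strictly positive at $x=\rho(\mathcal{T}(v;p+1,q-1))$ (which agrees with the content of Proposition~\ref{prop_powertree-pa-pb-matchpoly}, the partition $\{p-1,q-1\}$ being more balanced than $\{p,q-2\}$). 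The product of two strictly negative factors is strictly positive, giving $\mathcal{T}(v;p+1,q-1)\prec\mathcal{T}(v;p,q)$, whence the spectral-radius inequality follows from the remark after the definition of $\prec$. The step that needs most care is the boundary case $q=1$: then $w_{p+1}=w_{p+q}$ is an endpoint rather than an interior vertex, so $\varphi(P_{p+q}^r-w_{p+q},x)=x^{r-2}p_{p+q-1}$ must be used in place of the interior formula; a parallel direct computation handles this case, collapsing $D(p,1)$ via the path recursion to $x^{2(r-2)}p_{p-2}\sum_{e\in E_v(\mathcal{T})}\varphi(\mathcal{T}-V(e),x)$, which is again manifestly positive at the relevant spectral radius.
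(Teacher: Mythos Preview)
Your argument is correct and arrives at the same final factorization as the paper, but via a genuinely different packaging. The paper proceeds by repeatedly applying the edge-deletion formula (Theorem~\ref{thm_matchingpoly}(b)) to peel off one pendent edge at a time, telescoping the difference down to
\[
x^{q(r-2)}\bigl(\varphi(\mathcal{T}(v;p-q,0),x)-\varphi((\mathcal{T}-v)\cup P_{p-q}^r,x)\bigr),
\]
and then invokes Theorem~\ref{thm_subgraphmatchpoly} on the proper partial hypergraph $(\mathcal{T}-v)\cup P_{p-q}^r\subset\mathcal{T}(v;p-q,0)$. You instead set up a Schwenk-type coalescence identity once, realize both supertrees as $\mathcal{T}\cdot P_{p+q}^r$ at two adjacent interior vertices, and split the difference into a $\mathcal{T}$-factor $T-xT'=-\sum_{e\in E_v}\varphi(\mathcal{T}-V(e),x)$ and a pure path factor governed by the Cassini identity $p_{p-1}p_{q-1}-p_pp_{q-2}=x^{(q-1)(r-2)}p_{p-q-1}$. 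Expanding the paper's final expression via the same coalescence identity gives exactly your $x^{(q+1)(r-2)}(T-xT')\,p_{p-q-1}$, so the two computations coincide; yours is more modular (the coalescence identity is reusable, cf.\ the paper's own Theorem~\ref{thm_path-verH<edgeH}) while the paper's is more self-contained. Two small points: you phrase the sign checks ``at $x=\rho(\mathcal{T}(v;p+1,q-1))$'', but the definition of $\prec$ requires $\varphi(\mathcal{T}',x)\geq\varphi(\mathcal{T},x)$ for \emph{all} $x\geq\rho(\mathcal{T}')$; your argument gives strict inequality throughout that range, so just say so. Also, in the $q=1$ boundary case your stated formula $x^{2(r-2)}p_{p-2}$ presupposes $p\geq 2$ (the path recursion $p_k=x^{r-1}p_{k-1}-x^{r-2}p_{k-2}$ does not extend to $p_{-1}$ when $r>2$); for $p=q=1$ a direct check gives the factor $x^{r-2}$ instead, with the same sign.
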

\begin{proof}
We first consider the case that $p\geq q=1$. Applying  (b) of Theorem~\ref{thm_matchingpoly} on $\mathcal{T}(v;p,1)$ and   the pendent edge attached at $v$, we have
\begin{align}\label{equ3}
 \varphi(\mathcal{T}(v;p,1),x)=& x^{r-1}\varphi(\mathcal{T}(v;p,0),x)- x^{r-2}\varphi((\mathcal{T}-v)\cup P_{p-1}^r).
\end{align}
\noindent
Similarly, applying  (b) of Theorem~\ref{thm_matchingpoly} on $\mathcal{T}(v;p+1,0)$ and   the pendent edge of the pendent path of length $p+1$  attached at $v$, we have
\begin{align}\label{equ4}
 \varphi(\mathcal{T}(v;p+1, 0),x)=& x^{r-1}\varphi(\mathcal{T}(v;p,0),x)- x^{r-2}\varphi(\mathcal{T}(v;p-1,0)).
\end{align}
By (\ref{equ3}) and (\ref{equ4}), we deduce that
\begin{align*}
 \varphi(\mathcal{T}(v;p,1),x)-\varphi(\mathcal{T}(v;p+1,0),x)
 =x^{(r-2)}(\varphi(\mathcal{T}(v;p-1,0))-\varphi((\mathcal{T}-v)\cup P_{p-1}^r)).
\end{align*}
Note  that $(\mathcal{T}-v)\cup P_{p-1}^r$ is a proper partial hypergraph of $\mathcal{T}(v;p-1,0)$. By Theorems  \ref{YANGYANG-SIAM-10} and \ref{thm_subgraphmatchpoly}, the desired result follows.

When $p\geq q\geq2$, applying (b) of Theorem~\ref{thm_matchingpoly} on $\mathcal{T}(v;p,q)$ and  the pendent edge of the  pendent path of length $q$ attached at $v$, we have
\begin{align}\label{eq3}
  \varphi(\mathcal{T}(v;p,q),x)= & x^{r-1}\varphi(\mathcal{T}(v;p,q-1),x)-x^{r-2}\varphi(\mathcal{T}(v;p,q-2),x).
\end{align}
\noindent
Similarly,
\begin{align}\label{eq4}
\varphi(\mathcal{T}(v;p+1,q-1),x)=& x^{r-1}\varphi(\mathcal{T}(v;p,q-1),x)-x^{r-2}\varphi(\mathcal{T}(v;p-1,q-1),x).
\end{align}

\noindent
By (\ref{eq3}) and (\ref{eq4}), we deduce that
\begin{align*}
    \varphi(\mathcal{T}(v;p,q),x)-\varphi(\mathcal{T}(v;p+1,q-1),x)
   = x^{r-2}(\varphi(\mathcal{T}(v;p-1,q-1),x)-\varphi(\mathcal{T}(v;p,q-2),x)).
\end{align*}

\noindent
Continue this process, we get
\begin{align}\label{e_pqp+1q-1}
 &\ \ \ \  \varphi(\mathcal{T}(v;p,q),x)-\varphi(\mathcal{T}(v;p+1,q-1),x)\nonumber\\
  &=x^{(r-2)(q-1)}(\varphi(\mathcal{T}(v;p-q+1,1),x)-\varphi(\mathcal{T}(v;p-q+2, 0),x)).
\end{align}

\noindent
Applying Theorem~\ref{thm_matchingpoly} once more, we have
\begin{align}\label{e_pq01}
 \varphi(\mathcal{T}(v;p-q+1,1),x)=& x^{r-1}\varphi(\mathcal{T}(v;p-q+1,0),x)- x^{r-2}\varphi((\mathcal{T}-v)\cup P_{p-q}^r)
\end{align}
\noindent
and
\begin{align}\label{eq7}
 \varphi(\mathcal{T}(v;p-q+2, 0),x)=& x^{r-1}\varphi(\mathcal{T}(v;p-q+1,0),x)- x^{r-2}\varphi(\mathcal{T}(v;p-q,0)).
\end{align}
 Substituting \eqref{e_pq01} and \eqref{eq7} into \eqref{e_pqp+1q-1}, we obtain
\begin{align*}
  \varphi(\mathcal{T}(v;p,q),x)-\varphi(\mathcal{T}(v;p+1,q-1),x)
 =x^{q(r-2)}(\varphi(\mathcal{T}(v;p-q,0))-\varphi((\mathcal{T}-v)\cup P_{p-q}^r)).
\end{align*}
Note  that $(\mathcal{T}-v)\cup P_{p-q}^r$ is a proper partial hypergraph of $\mathcal{T}(v;p-q,0)$.  Applying Theorems    \ref{YANGYANG-SIAM-10} and \ref{thm_subgraphmatchpoly}, we get the desired result.
\end{proof}

Suppose that $\mathcal{T}$  is an  $r$-uniform supertree (with at least two edges) and $u$ and $v$ are two  vertices incident with an edge $e$ in  $\mathcal{T}$.   Let $\mathcal{T}^{(1)}(u,v;p,q)$ (see Fig.~1(b)) be obtained by attaching two pendent paths of length $p$ and $q$ at $u$ and $v$, respectively.
\begin{theorem}\label{thm_graftat2adjvertex}
 If $p\geq q\geq 1$, then
\begin{align*}
 \mathcal{T}^{(1)}(u,v;p,q)\succ\mathcal{T}^{(1)}(u,v;p+1,q-1).
\end{align*}
In particularly,
\begin{align*}
\rho(\mathcal{T}^{(1)}(u,v;p,q))>\rho(\mathcal{T}^{(1)}(u,v;p+1,q-1)).
\end{align*}

\end{theorem}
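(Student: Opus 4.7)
The plan is to mirror the proof of Theorem~\ref{thm_graftat1vertex}, reducing via a matching-polynomial recursion to a base case and then invoking Theorem~\ref{thm_subgraphmatchpoly} on a suitable partial hypergraph. The principal obstacle is the base case $q=1$, where the edge $e$ shared by $u$ and $v$ introduces an asymmetry not present in the single-vertex setting of Theorem~\ref{thm_graftat1vertex}.

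First, for $q\ge 2$, apply Theorem~\ref{thm_matchingpoly}(b) to the pendent edge at the outer end of the length-$q$ path at $v$ in $\mathcal{T}^{(1)}(u,v;p,q)$ and to the pendent edge at the outer end of the length-$(p+1)$ path at $u$ in $\mathcal{T}^{(1)}(u,v;p+1,q-1)$. Both identities share the common term $x^{r-1}\varphi(\mathcal{T}^{(1)}(u,v;p,q-1),x)$, and cancelling it yields
\begin{equation*}
\varphi(\mathcal{T}^{(1)}(u,v;p,q),x)-\varphi(\mathcal{T}^{(1)}(u,v;p+1,q-1),x)=x^{r-2}\bigl[\varphi(\mathcal{T}^{(1)}(u,v;p-1,q-1),x)-\varphi(\mathcal{T}^{(1)}(u,v;p,q-2),x)\bigr].
\end{equation*}
Iterating $q-1$ times reduces the problem to showing $\varphi(\mathcal{T}^{(1)}(u,v;P,1),x)\le\varphi(\mathcal{T}^{(1)}(u,v;P+1,0),x)$ for $x\ge\rho(\mathcal{T}^{(1)}(u,v;P+1,0))$, with strict inequality at the endpoint, where $P:=p-q+1\ge 1$.

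For the base case, apply part (b) of Theorem~\ref{thm_matchingpoly} once more to the length-1 pendent edge $e'$ at $v$ in the first hypergraph and to the pendent edge of the length-$(P+1)$ path at $u$ in the second, and subtract:
\begin{equation*}
\varphi(\mathcal{T}^{(1)}(u,v;P,1),x)-\varphi(\mathcal{T}^{(1)}(u,v;P+1,0),x)=x^{r-2}\varphi(\mathcal{T}^{(1)}(u,v;P-1,0),x)-\varphi(\mathcal{T}^{(1)}(u,v;P,1)-V(e'),x).
\end{equation*}
Writing $\mathcal{T}\setminus e=\mathcal{T}'_u\cup\mathcal{T}'_v\cup N_{r-2}$ for the decomposition into connected components (with $u\in V(\mathcal{T}'_u)$ and $v\in V(\mathcal{T}'_v)$), one sees that $\mathcal{T}^{(1)}(u,v;P,1)-V(e')$ is isomorphic to the disjoint union of the supertree obtained by attaching $P_P^r$ to $\mathcal{T}'_u$ at $u$, together with $\mathcal{T}'_v-v$ and $r-2$ isolated vertices arising from the cores of $e$. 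Applying Theorem~\ref{thm_matchingpoly}(b) once more at the edge $e$ in $\mathcal{T}^{(1)}(u,v;P-1,0)$ expresses that side too in terms of $\mathcal{T}'_u$, $\mathcal{T}'_v$, and path matching polynomials.

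Using the path recurrence $\varphi(P_\ell^r,x)=x^{r-1}\varphi(P_{\ell-1}^r,x)-x^{r-2}\varphi(P_{\ell-2}^r,x)$ together with formula (c) of Theorem~\ref{thm_matchingpoly} at $v$ in $\mathcal{T}'_v$, the right-hand side rearranges into a non-negative combination of two expressions: a term proportional to $\varphi((\mathcal{T}'_u-u)\cup P_{P-2}^r,x)-\varphi(\mathcal{T}_u^\star,x)$, where $\mathcal{T}_u^\star$ denotes $\mathcal{T}'_u$ with $P_{P-2}^r$ attached at $u$, and a term proportional to $x\varphi(\mathcal{T}'_v-v,x)-\varphi(\mathcal{T}'_v,x)$. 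The first is non-negative by Theorem~\ref{thm_subgraphmatchpoly} because $(\mathcal{T}'_u-u)\cup P_{P-2}^r$ is a proper partial hypergraph of the supertree $\mathcal{T}_u^\star$ on the same vertex set (provided $u$ has at least one edge in $\mathcal{T}'_u$); the second equals $\sum_{f\in E_v(\mathcal{T}'_v)}\varphi(\mathcal{T}'_v-V(f),x)$ by formula (c) and is also non-negative. The hypothesis that $\mathcal{T}$ has at least two edges guarantees that at least one of $u$, $v$ has degree at least $2$ in $\mathcal{T}$, so at least one of the two contributions is strictly positive at $x=\rho(\mathcal{T}^{(1)}(u,v;P+1,0))$. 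This gives the strict inequality and hence $\rho(\mathcal{T}^{(1)}(u,v;p,q))>\rho(\mathcal{T}^{(1)}(u,v;p+1,q-1))$.
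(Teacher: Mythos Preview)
Your reduction to the base case $P=p-q+1$ via the matching-polynomial recursion is correct and matches the paper's opening move (equation~\eqref{equ10}). The subsequent analysis of the base case, however, has a genuine gap.

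You write $\mathcal{T}\setminus e=\mathcal{T}'_u\cup\mathcal{T}'_v\cup N_{r-2}$. This tacitly assumes that the $r-2$ vertices of $e$ other than $u,v$ are all core vertices, so that removing $e$ leaves them isolated. Nothing in the hypotheses forces this: any of those vertices may carry its own subtree. The paper accordingly works with three pieces $H_1,H_2,H$ (and $H'=H$ minus the $r-2$ vertices of $e$), and the extra edges in $H$ are precisely what produce the second ``proper partial hypergraph'' term in~\eqref{e2verp=q} and the Case~2 argument after~\eqref{e2adjvertex3cases}. Your decomposition simply drops this contribution.

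This omission also invalidates your final sentence: from ``$\mathcal{T}$ has at least two edges'' you cannot conclude that $u$ or $v$ has degree at least~$2$, since the second edge of $\mathcal{T}$ may be attached at one of the other $r-2$ vertices of $e$. In that situation both $\mathcal{T}'_u$ and $\mathcal{T}'_v$ are single vertices, both of your two ``contributions'' vanish, and your argument yields no strict inequality. The paper handles exactly this by keeping $H$ in play (and by disposing of the fully degenerate case $E(H)\cup E(H_2)=\emptyset$ via Theorem~\ref{thm_graftat1vertex}).

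Two smaller points: your claimed rearrangement involves $P_{P-2}^r$, which is undefined when $P=1$ (the case $p=q$); the paper treats $p=q$ and $p>q$ separately for this reason. And the sign convention is inverted: the difference $\varphi(\mathcal{T}^{(1)}(u,v;P,1),x)-\varphi(\mathcal{T}^{(1)}(u,v;P+1,0),x)$ should come out \emph{non-positive}, i.e.\ the negative of a non-negative combination of your two bracketed terms, not a non-negative combination itself.
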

\begin{proof}
Using the similar argument as in the proof  of Theorem~\ref{thm_graftat1vertex},   we have
\begin{align}\label{equ10}
  &\varphi(\mathcal{T}^{(1)}(u,v;p,q),x)-\varphi(\mathcal{T}^{(1)}(u,v;p+1,q-1),x)\nonumber\\
  &=x^{(r-2)(q-1)}(\varphi(\mathcal{T}^{(1)}(u,v;p-q+1,1),x)-\varphi(\mathcal{T}^{(1)}(u,v;p-q+2,0),x))\nonumber\\
 &=x^{(r-2)(q-1)}(x^{r-2}\varphi(\mathcal{T}(u;p-q,0),x)-\varphi((\mathcal{T}-v)(u;p-q+1,0),x)).
\end{align}

Let $H_1$  and $H_2$ be the components of $\mathcal{T}\setminus e$
containing vertex $u$ and $v$ respectively, and  $H$ be the union of the remaining components. We denote $H'$ as  the partial hypergraph of $H$ obtained from $H$ by removing $r-2$ vertices contained in $e$.

We may assume that $E(H)\cup E(H_2)$ is not  empty. Otherwise, $\mathcal{T}^{(1)}(u,v;p,q)$ is isomorphic to $H_1(u;p,q+1)$. The result follows from Theorem~\ref{thm_graftat1vertex}.

 When $p=q\geq 1$, applying (b) of Theorem~\ref{thm_matchingpoly} to $\mathcal{T}(u;0,0)$ and edge $e$, we have
  \begin{align}\label{equ12}
   \varphi(\mathcal{T}(u;0,0),x)
=\varphi(H_1\cup H\cup H_2,x)-\varphi((H_1-u)\cup H'\cup (H_2-v),x)
\end{align}

 Similarly,  applying (b) of Theorem~\ref{thm_matchingpoly} to $(\mathcal{T}-v)(u;1,0)$ and the pendent edge  attached at $u$, we have
   \begin{align}\label{equ13}
   &\varphi((\mathcal{T}-v)(u;1,0),x)\nonumber\\
&=x^{r-1}\varphi(H_1\cup H\cup (H_2-v),x)-\varphi((H_1-u)\cup H\cup (H_2-v),x)
\end{align}
   Substituting \eqref{equ12} and \eqref{equ13}  into \eqref{equ10}, we obtain
  \begin{align}\label{e2verp=q}
  &\varphi(\mathcal{T}^{(1)}(u,v;p,q),x)-\varphi(\mathcal{T}^{(1)}(u,v;p+1,q-1),x)\nonumber\\
&=x^{q(r-2)}[\varphi(H_1\cup H\cup H_2,x)-x\varphi(H_1\cup H\cup (H_2-v),x)]\nonumber\\
&\ \ \ \ +x^{(q-1)(r-2)}[\varphi((H_1-u)\cup H\cup (H_2-v),x)-x^{r-2}\varphi((H_1-u)\cup H'\cup (H_2-v),x)]\nonumber\\
&=x^{q(r-2)}[\varphi(H_1\cup H\cup H_2,x)-\varphi(H_1\cup H\cup (H_2-v)\cup\{v\},x)]+x^{(q-1)(r-2)}[\varphi((H_1-u)\cup H\nonumber\\
&\ \ \ \ \cup (H_2-v),x)-\varphi((H_1-u)\cup H'\cup (V(e)-\{u,v\})\cup (H_2-v),x)]
\end{align}

Since $E(H)\cup E(H_2)\not=\emptyset$, either $(H_2-v)\cup\{v\} $ is a proper partial hypergraph of $H_2$, or $H'\cup (V(e)-\{u,v\})$   is a proper partial hypergraph of $H$. By Theorems   \ref{YANGYANG-SIAM-10}, \ref{thm_subgraphmatchpoly} and (\ref{e2verp=q}), the result follows.

  When $p>q\geq1$, applying (b) of Theorem~\ref{thm_matchingpoly} to $\mathcal{T}(u;p-q,0)$ and the edge $e$, we have
  \begin{align}\label{equ15}
\varphi(\mathcal{T}(u;p-q,0),x)=&\varphi(H_1(u; p-q,0)\cup H\cup H_2)\nonumber\\
&-x^{r-2}\varphi((H_1-u)\cup P_{p-q-1}^r\cup H'\cup (H_2-v))
\end{align}
Similarly, applying (b) of Theorem~\ref{thm_matchingpoly} to $(\mathcal{T}-v)(u;p-q+1,0)$ and the pendent edge of the pendent path of length $p-q+1$ attached at $u$, we have
\begin{align}\label{e2adj_vertex_p-q}
\varphi((\mathcal{T}-v)(u;p-q+1,0),x)=&x^{r-1}\varphi(H_1(u; p-q,0)\cup H\cup (H_2-v),x)\nonumber\\
&-x^{r-2}\varphi(H_1(u; p-q-1,0)\cup H\cup (H_2-v),x).
\end{align}
 Substituting \eqref{equ15} and  ~\eqref{e2adj_vertex_p-q} into ~\eqref{equ10} yields
 \begin{align}\label{e2adjvertex3cases}
  &\varphi(\mathcal{T}^{(1)}(u,v;p,q),x)-\varphi(\mathcal{T}^{(1)}(u,v;p+1,q-1),x)\nonumber\\
 &=x^{q(r-1)}\varphi(H_1(u; p-q,0),x)\varphi(H,x)[\varphi(H_2,x)-x\varphi(H_2-v),x)]\\
&+x^{q(r-1)}\varphi(H_2-v,x)[\varphi(H_1(u; p-q-1,0)\cup H,x)-\varphi((H_1-u)\cup P_{p-q-1}^r\cup H'\cup  N_{r-2},x)].\nonumber
\end{align}

We consider the following two cases depending on whether or not $E(H_1)\cup E(H_2)$ is empty.

\noindent
\textbf{Case 1.}  $E(H_1)\cup E(H_2)\neq\emptyset$. Without loss of generality, we assume that $E(H_1)\neq\emptyset$. It is easily seen that
$(H_1-u)\cup P_{p-q-1}^r$ is a proper  partial hypergraph of $H_1(u; p-q-1,0)$.  By Theorems   \ref{YANGYANG-SIAM-10}, \ref{thm_subgraphmatchpoly} and (\ref{e2adjvertex3cases}), we prove the desired result.

\noindent
 \textbf{Case 2.}  $E(H_1)\cup E(H_2)=\emptyset$.
  Since $E(H_1)$ is empty,   $H_1(u; p-q-1,0)$ and $(H_1-u)\cup P_{p-q-1}^r$ are equal to   $P_{p-q-1}^r$.  So $(H_1-u)\cup P_{p-q-1}^r\cup H'\cup  N_{r-2}$ is proper partial hypergraph of $H_1(u; p-q-1,0)\cup H$. By Theorems   \ref{YANGYANG-SIAM-10}, \ref{thm_subgraphmatchpoly} and (\ref{e2adjvertex3cases}),  desired result follows.   \end{proof}

\begin{figure}[!hbpt]
\begin{center}
\includegraphics[scale=0.6]{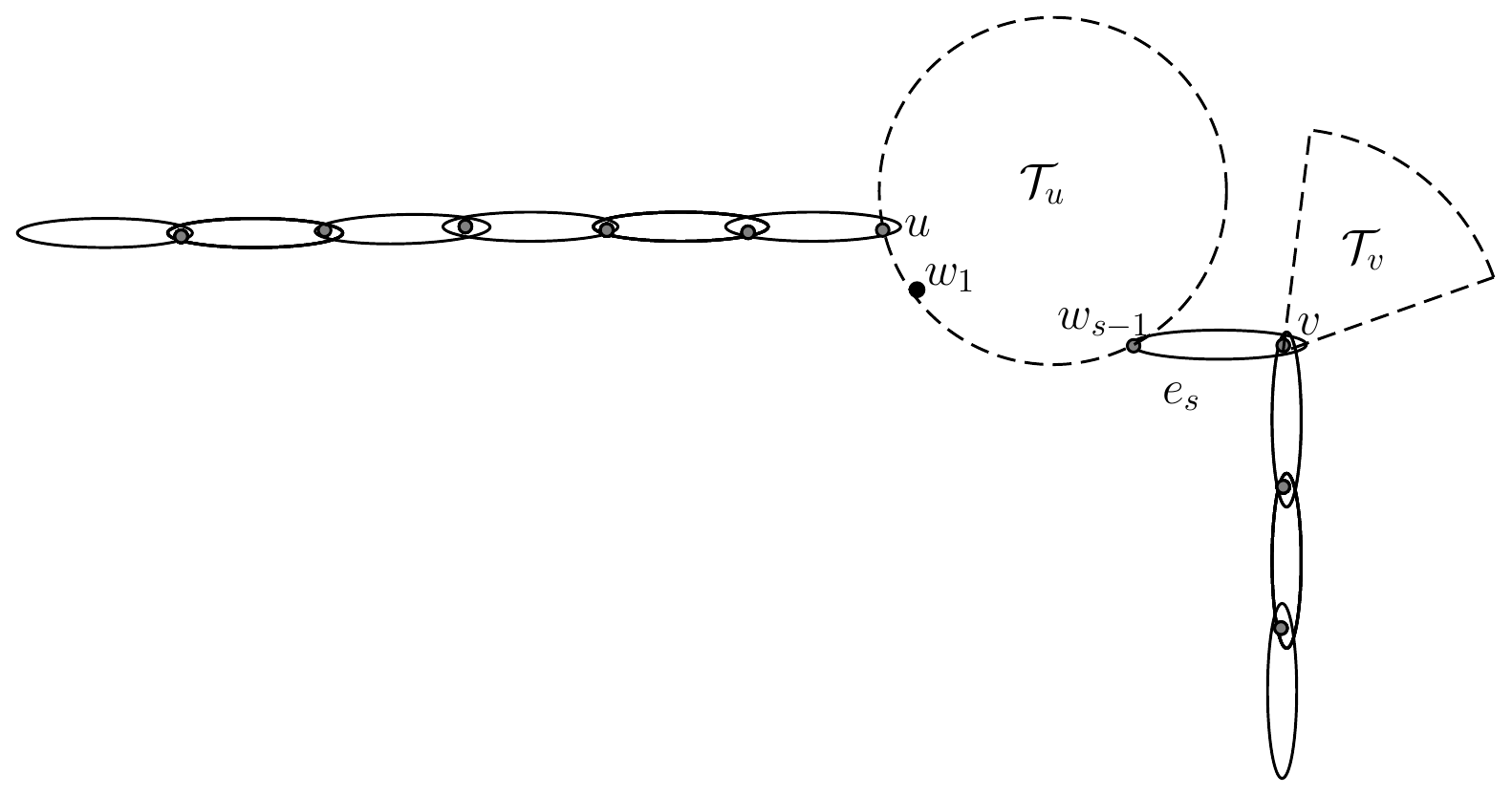}
\caption{Supertree $\mathcal{T}^{(s)}(u,v;6,3)$}.
\end{center}\label{Ts63}
\end{figure}
Suppose that $\mathcal{T}$  is an $r$-uniform supertree  and $u$ and $v$ are two  vertices connected by a  path $P$ of length $s$ in  $\mathcal{T}$, say $P=(u,e_1,w_1,e_2,w_2,\ldots, e_{s-1},w_{s-1},e_s, v)$, and    all the $r-2$ vertices in the set $e_i\setminus\{w_{i-1}, w_i\}$ are of
degree one in $\mathcal{T}$ for $i=2,\ldots,s$, where $w_s=v$.  Let $\mathcal{T}^{(s)}(u,v;p,q)$ be obtained by attaching two pendent paths of length $p$ and $q$ at $u$ and $v$ respectively (see Fig.~2).

\begin{theorem}\label{thm_graftat2vertex-dist-s}
 If $p-q\geq s\geq 1$ and $q\geq1$, then
\begin{align*}
 \mathcal{T}^{(s)}(u,v;p,q)\succ\mathcal{T}^{(s)}(u,v;p+1,q-1).
\end{align*}
In particularly,
\begin{align*}
\rho(\mathcal{T}^{(s)}(u,v;p,q))>\rho(\mathcal{T}^{(s)}(u,v;p+1,q-1)).
\end{align*}
\end{theorem}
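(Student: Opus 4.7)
The approach mirrors the proofs of Theorems~\ref{thm_graftat1vertex} and \ref{thm_graftat2adjvertex}: first peel off the pendent paths at $v$ via part~(b) of Theorem~\ref{thm_matchingpoly}, then exploit the ``pendent'' character of $e_2,\ldots,e_s$ imposed by the degree-one hypothesis on their core vertices.

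Step 1 (telescoping). Iterated application of (b) of Theorem~\ref{thm_matchingpoly} to the outermost edges of the length-$q$ and length-$(q-1)$ pendent paths at $v$, exactly as in the derivation of~\eqref{e_pqp+1q-1}, telescopes to
\begin{align*}
&\varphi(\mathcal{T}^{(s)}(u,v;p,q),x) - \varphi(\mathcal{T}^{(s)}(u,v;p+1,q-1),x) \\
&\qquad = x^{(r-2)(q-1)}\bigl[\varphi(\mathcal{T}^{(s)}(u,v;p-q+1,1),x) - \varphi(\mathcal{T}^{(s)}(u,v;p-q+2,0),x)\bigr].
\end{align*}
One further round of (b) of Theorem~\ref{thm_matchingpoly} --- on the pendent edge at $v$ in the first hypergraph, and on the outermost edge of the length-$(p-q+2)$ pendent path at $u$ in the second --- collapses the bracket to
\begin{align*}
x^{r-2}\varphi(\mathcal{T}^{(s)}(u,v;p-q,0),x)\;-\;\varphi\bigl(\mathcal{T}^{(s)}(u,v;p-q+1,0)-v,\,x\bigr).
\end{align*}

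Step 2 (positivity --- the main obstacle). I must show this last expression is strictly positive whenever $x\geq\rho(\mathcal{T}^{(s)}(u,v;p+1,q-1))$. The degree-one hypothesis on the core vertices of $e_2,\ldots,e_s$ ensures that the sub-hypergraph on $w_1,e_2,\ldots,e_s,v$ is isomorphic to $P_{s-1}^r$ rooted at $w_1$, and becomes a copy of $P_{s-2}^r$ together with $r-2$ isolated vertices after deleting $v$. Applying (b) of Theorem~\ref{thm_matchingpoly} at the edge $e_1$ in both $\mathcal{T}^{(s)}(u,v;p-q,0)$ and $\mathcal{T}^{(s)}(u,v;p-q+1,0)-v$ reduces the comparison, after routine simplification, to a comparison between two $Y$-configurations at $u$: pendent paths of lengths $(p-q,s)$ versus $(p-q+1,s-1)$ grafted onto the common base $\mathcal{H}_u$ (the component of $\mathcal{T}\setminus e_1$ containing $u$). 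Since $p-q\geq s$, Theorem~\ref{thm_graftat1vertex} applied at $u$ yields $\mathcal{H}_u(u;p-q,s)\succ\mathcal{H}_u(u;p-q+1,s-1)$, which is the dominant inequality. The residual contributions coming from the other components of $\mathcal{T}\setminus e_1$ (the subtrees attached to the $r-2$ core vertices of $e_1$) appear with matching factors on both sides and are bounded by combining Theorems~\ref{YANGYANG-SIAM-10} and~\ref{thm_subgraphmatchpoly}, since each residual term is multiplied by a polynomial strictly positive on the relevant range of $x$. Assembling the pieces gives strict positivity of the right-hand side, so $\mathcal{T}^{(s)}(u,v;p,q)\succ\mathcal{T}^{(s)}(u,v;p+1,q-1)$, and the spectral radius inequality follows from the definition of $\prec$.
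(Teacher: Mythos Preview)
Your Step~1 is correct and matches the paper's computation down to the expression
\[
x^{r-2}\varphi\bigl(\mathcal{T}(u;p-q,0),x\bigr)\;-\;\varphi\bigl((\mathcal{T}-v)(u;p-q+1,0),x\bigr).
\]
(There is a sign slip: to obtain $\succ$ you need this quantity to be $\le 0$ on the relevant range, not positive.)

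Step~2 has a genuine gap. The degree-one hypothesis in the definition of $\mathcal{T}^{(s)}(u,v;p,q)$ constrains only the $r-2$ core vertices of $e_i$ for $i=2,\ldots,s$; it says nothing about the core vertices of $e_1$, nor about the degrees of $w_1,\ldots,w_{s-1}$ or of $v$ in $\mathcal{T}$. Each $w_i$ (and $v$) may carry an arbitrary attached subtree. Hence the component of $\mathcal{T}\setminus e_1$ containing $w_1$ is in general not the loose path $P_{s-1}^r$, but that path together with everything hanging off $w_1,\ldots,w_{s-1},v$. Applying (b) of Theorem~\ref{thm_matchingpoly} at $e_1$ disconnects $\mathcal{H}_u$ from this entire component, and the $Y$-configurations $\mathcal{H}_u(u;p-q,s)$ and $\mathcal{H}_u(u;p-q+1,s-1)$ that you wish to compare via Theorem~\ref{thm_graftat1vertex} never appear in the decomposition. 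Your ``residual contributions'' account only for subtrees on the core vertices of $e_1$ and ignore those on the $w_i$'s; the claim that residuals ``appear with matching factors on both sides'' is not substantiated and is, in general, false.

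The paper proceeds instead by induction on $s$, splitting at $e_s$ rather than $e_1$. Writing $\mathcal{T}_u,\mathcal{T}_v$ for the components of $\mathcal{T}\setminus e_s$, one has
\[
(\mathcal{T}-v)(u;p-q+1,0)\;\cong\;\mathcal{T}_u^{(s-1)}(u,w_{s-1};p-q+1,0)\cup(\mathcal{T}_v-v)\cup N_{r-2}.
\]
Expanding $\varphi(\mathcal{T}(u;p-q,0),x)$ via part~(c) of Theorem~\ref{thm_matchingpoly} at the edges of $\mathcal{T}_v$ incident to $v$ factors the difference as
\[
\varphi(\mathcal{T}_v-v,x)\Bigl[\varphi\bigl(\mathcal{T}_u^{(s-1)}(u,w_{s-1};p-q,1),x\bigr)-\varphi\bigl(\mathcal{T}_u^{(s-1)}(u,w_{s-1};p-q+1,0),x\bigr)\Bigr]
\]
minus a manifestly nonnegative correction. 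The bracket has the required sign by the induction hypothesis (note $(p-q)-1\ge s-1$), and all the subtrees attached along the path are absorbed into the single object $\mathcal{T}_u$, so no uncontrolled residuals arise.
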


\begin{proof}
We  proceed by induction on $s$. For the case $s=1$, the assertion holds by Theorem \ref{thm_graftat2adjvertex}. Let $\mathcal{T}_u$ and $\mathcal{T}_v$ denote the components of $\mathcal{T}\setminus e_s$ containing $u$ and $v$, respectively.
Using the similar argument as in the proof  of Theorem~\ref{thm_graftat2adjvertex},   we have
\begin{align}\label{equ18}
&\varphi(\mathcal{T}^{(s)}(u,v;p,q),x)-\varphi(\mathcal{T}^{(s)}(u,v;p+1,q-1),x)\nonumber\\
&=x^{(q-1)(r-2)}[\varphi(\mathcal{T}^{(s)}(u,v;p-q+1,1),x)-\varphi(\mathcal{T}(u;p-q+2, 0),x)]\nonumber\\
&=x^{(q-1)(r-2)}[x^{r-2}\varphi(\mathcal{T}(u;p-q,0),x)-\varphi((\mathcal{T}-v)(u;p-q+1,0),x)]\nonumber\\
&=x^{q(r-2)}[\varphi(\mathcal{T}(u;p-q,0),x)-\varphi(\mathcal{T}^{(s-1)}_u(u,w_{s-1};p-q+1, 0)\cup(\mathcal{T}_v-v),x)],
\end{align}
where the last equality follows from  $(\mathcal{T}-v)(u;p-q+1, 0)\cong\mathcal{T}^{(s-1)}_{u}(u,w_{s-1};p-q+1,0)\cup (\mathcal{T}_v-v)\cup N_{r-2}$.

Applying  (c) of Theorem~\ref{thm_matchingpoly} to  $\mathcal{T}(u;p-q,0)$ and the  edges  incident to $v$ in $\mathcal{T}_v$, we have
\begin{align}\label{equ19}
\varphi(\mathcal{T}(u;p-q,0),x)=&\varphi(\mathcal{T}_v-v,x)\varphi(\mathcal{T}^{(s-1)}_u(u,w_{s-1};p-q,1),x)-\nonumber\\
& x^{r-2} \varphi(\mathcal{T}_u(u;p-q,0),x)\sum_{e\in E_v\cap E(\mathcal{T}_v)}\varphi(\mathcal{T}_v-V(e),x).
\end{align}
 Substituting \eqref{equ19} into \eqref{equ18}, we obtain
\begin{align}\label{e-Tspq}
&\varphi(\mathcal{T}^{(s)}(u,v;p,q),x)-\varphi(\mathcal{T}^{(s)}(u,v;p+1,q-1),x)\nonumber\\
&=x^{q(r-2)}\varphi(\mathcal{T}_v-v,x)[\varphi(\mathcal{T}^{(s-1)}_u(u,w_{s-1};p-q,1),x)-\varphi(\mathcal{T}^{(s-1)}_u(u,w_{s-1};p-q+1, 0),x)]\nonumber\\
&\quad -x^{(q+1)(r-2)} \varphi(\mathcal{T}_u(u;p-q,0),x)\sum_{e\in E_v\cap E(\mathcal{T}_v)}\varphi(\mathcal{T}_v-V(e),x).
\end{align}
By  induction hypothesis, $\mathcal{T}^{(s-1)}_u(u,w_{s-1};p-q,1)\succ\mathcal{T}^{(s-1)}_u(u,w_{s-1};p-q+1, 0)$. Combining this with Theorems \ref{YANGYANG-SIAM-10} and \ref{thm_subgraphmatchpoly}, we prove the theorem. \end{proof}

\begin{lemma}\label{lem-edgerelease}
Let $\mathcal{T}'$ be  an  $r$-uniform supertree obtained by edge-releasing a non-pendent edge  of $\mathcal{T}$.  Then $\mathcal{T}'$ is a uniform supertree and  $\mathcal{T}\prec \mathcal{T}'$.
\end{lemma}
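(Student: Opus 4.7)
The plan is first to verify the structural claim, then to establish the matching polynomial comparison via Theorem~\ref{thm_matchingpoly}(b)--(c) applied to the edge $e$.

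\emph{Structural claim.} Edge-releasing fixes the vertex set, replaces each edge $e^{\ast}\in E_v\setminus\{e\}$ (for $v\in V(e)\setminus\{u\}$) by $(e^{\ast}\setminus\{v\})\cup\{u\}$, and leaves every other edge alone; thus $\mathcal{T}'$ is $r$-uniform with the same vertex and edge counts as $\mathcal{T}$. Connectivity is preserved since every path of $\mathcal{T}$ can be rerouted through $u$ via the modified edges. Linearity is a short case analysis: two modified edges coming from distinct intersection vertices $v\ne v'$ of $e$ that shared a vertex other than $u$ would, on pulling back to $\mathcal{T}$, give a cycle through $e$; the mixed and unchanged cases are similar. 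A connected linear $r$-uniform hypergraph with $|V|=1+|E|(r-1)$ is automatically a supertree (its bipartite incidence graph is a tree), so $\mathcal{T}'$ is one.

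\emph{Reduction.} Apply Theorem~\ref{thm_matchingpoly}(b) to $e$ in both hypergraphs. A direct inspection shows $\mathcal{T}-V(e)=\mathcal{T}'-V(e)$ (both equal the disjoint union of the branches $B_u=C_u-u$ and $B_v=C_v-v$ for $v\in V(e)\setminus\{u\}$, with $C_u,C_v$ the components of $\mathcal{T}\setminus e$ containing $u,v$). Hence
\[
\varphi(\mathcal{T})-\varphi(\mathcal{T}')=\varphi(\mathcal{T}\setminus e)-\varphi(\mathcal{T}'\setminus e)=P_u\prod_v P_v-x^{r-1}\varphi(\widehat{C}_u),
\]
where $P_u=\varphi(C_u)$, $P_v=\varphi(C_v)$, and $\widehat{C}_u$ is the coalescence at $u$ of $C_u$ together with each $C_v$ (identifying $v$ with $u$); the remaining $r-1$ vertices of $e$ are isolated in $\mathcal{T}'\setminus e$ and produce the factor $x^{r-1}$.

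\emph{Computation and positivity.} Expanding $\varphi(\widehat{C}_u)$ by Theorem~\ref{thm_matchingpoly}(c) at $u$, and rewriting the resulting neighbour-sums via Theorem~\ref{thm_matchingpoly}(c) on $C_u$ at $u$ and on each $C_v$ at $v$, one obtains after cancellation the clean identity
\[
\varphi(\mathcal{T},x)-\varphi(\mathcal{T}',x)=\sum_{\substack{S\subseteq V(e)\\|S|\ge 2}}(-1)^{|S|}\prod_{w\in S}R_w\prod_{w'\in V(e)\setminus S}(xQ_{w'}),
\]
where $Q_w=\varphi(B_w)$ and $R_w:=xQ_w-P_w$. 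Setting $a_w=R_w$ and $b_w=xQ_w$, the right-hand side equals $\prod_w(b_w-a_w)-\prod_w b_w+\sum_w a_w\prod_{w'\ne w}b_{w'}$, which is nonnegative on $0\le a_w\le b_w$: it vanishes at $\mathbf{a}=\mathbf{0}$, and each partial $\partial/\partial a_k=\prod_{w'\ne k}b_{w'}-\prod_{w\ne k}(b_w-a_w)$ is nonnegative in that region. For $x\ge\rho(\mathcal{T})$, Theorem~\ref{YANGYANG-SIAM-10} applied to the proper partial hypergraphs $C_u$, $C_v$, and $C_v-V(e^{\ast})$ yields $Q_w>0$ and $P_w,R_w\ge 0$, hence $\varphi(\mathcal{T})\ge\varphi(\mathcal{T}')$ there. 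Strict inequality at $x=\rho(\mathcal{T})$ follows from the non-pendent hypothesis on $e$, which forces at least two $R_w$ to be strictly positive at $x=\rho(\mathcal{T})$ and thus a strictly positive $|S|=2$ contribution, giving $\mathcal{T}\prec\mathcal{T}'$. The main obstacle is the algebraic bookkeeping needed to reach the clean alternating-sum identity; the positivity step itself is then elementary.
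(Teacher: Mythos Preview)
Your proof is correct and takes a genuinely different route from the paper. The paper first reduces to the case where $e$ has exactly two intersection vertices $v_1=u$ and $v_2$ (the general case follows by moving the branches $H_2,\ldots,H_s$ to $u$ one at a time; each such step produces the same two-branch difference multiplied by the positive factor $\prod_{i\ge 3}\varphi(H_i,x)$, and $\prec$ is transitive). In that base case the paper obtains the compact identity
\[
\varphi(\mathcal{T},x)-\varphi(\mathcal{T}',x)=x^{r-2}\sum_{e_i\in E_{v_1}\cap E(H_1)}\varphi(H_1-V(e_i),x)\,\bigl[x\varphi(H_2-v_2,x)-\varphi(H_2,x)\bigr],
\]
each factor of which is positive for $x\ge\rho(\mathcal{T})$ by Theorems~\ref{YANGYANG-SIAM-10} and~\ref{thm_subgraphmatchpoly}. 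You instead keep all vertices of $e$ in play simultaneously, observe that $x^{r-1}\varphi(\widehat{C}_u)$ equals exactly the $|S|\le 1$ part of the expansion of $\prod_{w}(b_w-a_w)$, and thereby arrive at the symmetric alternating sum over $|S|\ge 2$; your monotonicity argument for $f(\mathbf{a})=\prod_w(b_w-a_w)-\prod_w b_w+\sum_w a_w\prod_{w'\ne w}b_{w'}$ then handles every $s$ at once without induction. The trade-off: the paper's two-branch formula is simpler and makes positivity immediate, at the cost of an (unstated) step-by-step reduction; your approach avoids the reduction but needs the extra convexity step. One small wording issue: saying ``a strictly positive $|S|=2$ contribution'' does not by itself yield strictness, since the $|S|\ge 3$ terms alternate in sign; however, the desired strict inequality does follow from your own derivative computation, because when $a_{w_1},a_{w_2}>0$ one has $\partial f/\partial a_{w_1}>0$ along the whole segment from $(0,a_{w_2},\ldots)$ to $(a_{w_1},a_{w_2},\ldots)$, and $f$ vanishes at the former point.
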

\begin{proof}
That $\mathcal{T}'$ is a uniform supertree has been proved in \cite{LiShaoQi16}.    $\mathcal{T}$ may be regarded as one  consisting of $s\geq2$ supertrees, say $H_1,\ldots,H_s$, attached at vertices $v_1,\ldots,v_s$ of  $e$, respectively.   It suffices to prove the assertion for $s=2$. Let $H_1\cdot H_2$ be the coalescence of $H_1$ and $H_2$   obtained by identifying   $v_1$ of $H_1$ and $v_2$ of $H_2$.
It is not difficult to verify that $\mathcal{T}'\setminus e\cong H_1\cdot H_2\cup N_{r-1}$ and $\mathcal{T}'-V(e)\cong (H_1-v_1)\cup(H_2-v_2)$.
By Theorem~\ref{thm_matchingpoly}, we have
\begin{align}\label{e1-edgerelease-twosupertree}
\varphi(\mathcal{T},x)&=\varphi(\mathcal{T}\setminus e,x)-\varphi(\mathcal{T}-V(e),x)\nonumber\\
&=x^{r-2}\varphi(H_1\cup H_2,x)-\varphi((H_1-v_1)\cup(H_2-v_2),x).
\end{align}
and
\begin{align}\label{equ21}
\varphi(\mathcal{T}',x)&=\varphi(\mathcal{T}'\setminus e,x)-\varphi(\mathcal{T}'-V(e),x)\nonumber\\
&=x^{r-1}\varphi(H_1\cdot H_2,x)-\varphi((H_1-v_1)\cup(H_2-v_2),x).
\end{align}
By \eqref{e1-edgerelease-twosupertree} and \eqref{equ21}, we deduce that
\begin{align}\label{equ23}
\varphi(\mathcal{T},x)-\varphi(\mathcal{T}',x)=x^{r-2}[\varphi(H_1\cup H_2,x)-x\varphi(H_1\cdot H_2,x)].
\end{align}
Applying (c) of Theorem~\ref{thm_matchingpoly} to $H_1\cup H_2$ and edges in $H_1$ incident to $v_1$, we have
\begin{align}\label{equ24}
 \varphi(H_1\cup H_2,x)&=x\varphi((H_1-v_1)\cup H_2,x)-\varphi(H_2,x)\sum_{e_i\in E_{v_1}\cap E(H_1)}\varphi(H_1-V(e_i),x).
\end{align}
Similarly,
\begin{align}\label{equ25}
\varphi(H_1\cdot H_2,x)&=\varphi((H_1-v_1)\cup H_2,x)-\varphi(H_2-v_2,x)\sum_{e_i\in E_{v_1}\cap E(H_1)}\varphi(H_1-V(e_i),x).
\end{align}
\noindent
Substituting  \eqref{equ24} and   \eqref{equ25}   into \eqref{equ23}, we obtain
\begin{align*}
\varphi(\mathcal{T},x)-\varphi(\mathcal{T}',x)
=x^{r-2}\sum_{e_i\in E_{v_1}\cap E(H_1)}\varphi(H_1-V(e_i),x)[x\varphi(H_2-v_2,x)-\varphi(H_2,x)].
\end{align*}
By Theorems ~\ref{YANGYANG-SIAM-10} and ~\ref{thm_subgraphmatchpoly}, we have $\varphi(\mathcal{T},x)-\varphi(\mathcal{T}',x)>0$ if $x\geq\rho(\mathcal{T})$, so $\mathcal{T}\prec \mathcal{T}'$ holds. \end{proof}

As an application of Theorems \ref{thm_graftat1vertex} and \ref{thm_graftat2adjvertex}, the minimal supertree can be characterized as follows. Note that the upper bound and the extremal supertree have been obtained in \cite{LiShaoQi16}, and they are listed here  for completeness.

\begin{theorem}{\rm (\cite{LiShaoQi16})}\label{thm-hyperstar-loosepath}
If $\mathcal{T}$ is an $r$-uniform supertree with  $m$ edges, then
\begin{equation}\label{e1}
P_m^r\preceq\mathcal{T}\preceq S_{m}^r
\end{equation}
and
 \begin{equation}\label{e2}
\left(2\cos\frac{\pi}{m+2}\right)^{2/r}\leq \rho(\mathcal{T})\leq m^{1/r},
\end{equation}
with left equality in Eq.~\eqref{e1} and Eq.~\eqref{e2} if and only if  $\mathcal{T}\cong P_{m}^r$ and right equality in Eq.~\eqref{e1} and Eq.~\eqref{e2} if and only if  $\mathcal{T}\cong S_{m}^r$.
\end{theorem}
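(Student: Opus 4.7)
\emph{Proof plan.} The upper bound $\mathcal{T}\preceq S_m^r$ together with its equality case is already established in \cite{LiShaoQi16} by iterating the edge-releasing operation: Lemma~\ref{lem-edgerelease} shows that releasing any non-pendent edge of $\mathcal{T}$ produces a strictly $\succ$-larger $r$-uniform supertree, and the process terminates exactly at the hyperstar $S_m^r$, whose edges are all pendent. I therefore concentrate on the lower bound $P_m^r\preceq\mathcal{T}$, which is the genuinely new content and should flow directly from Theorem~\ref{thm_graftat1vertex}.

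My plan is an induction on the \emph{branching complexity}
\[
C(\mathcal{T}) \;:=\; \sum_{v\in V(\mathcal{T}),\,\deg(v)\geq 3}\bigl(\deg(v)-2\bigr).
\]
If $C(\mathcal{T})=0$ then every intersection vertex has degree exactly $2$, which in the $r$-uniform linear supertree setting forces $\mathcal{T}\cong P_m^r$ and makes the lower bound trivial. For the inductive step $C(\mathcal{T})\geq 1$, I locate a \emph{terminal} branching intersection vertex $v$ of $\mathcal{T}$: form the skeleton tree whose vertices are the branching intersection vertices of $\mathcal{T}$, with two such vertices joined whenever they are linked in $\mathcal{T}$ by a path whose interior contains no further branching vertex, and pick a leaf $v$ of this skeleton. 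At $v$, at most one branch can contain a further branching vertex, so at least $\deg(v)-1\geq 2$ of its branches are pendent paths; select two of them of lengths $p\geq q\geq 1$, and let $\mathcal{T}_0$ denote the supertree obtained by deleting these two pendent paths, so that $\mathcal{T}=\mathcal{T}_0(v;p,q)$.

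Applying Theorem~\ref{thm_graftat1vertex} $q$ successive times produces the chain
\[
\mathcal{T} \;=\; \mathcal{T}_0(v;p,q) \;\succ\; \mathcal{T}_0(v;p+1,q-1) \;\succ\;\cdots\;\succ\; \mathcal{T}_0(v;p+q,0) \;=:\; \mathcal{T}^{\ast}.
\]
Thus $\mathcal{T}^{\ast}\prec\mathcal{T}$, and since the two pendent paths at $v$ have been merged into one, $\deg(v)$ drops by exactly $1$ and $C(\mathcal{T}^{\ast})=C(\mathcal{T})-1$. By the inductive hypothesis $P_m^r\preceq\mathcal{T}^{\ast}$, whence $\rho(P_m^r)\leq\rho(\mathcal{T}^{\ast})<\rho(\mathcal{T})$, which yields the strict lower bound and uniqueness of $P_m^r$ as the minimizer in one stroke; the polynomial ordering $P_m^r\preceq\mathcal{T}$ is then obtained by composing the inductive-hypothesis inequality with the explicit difference formula from the proof of Theorem~\ref{thm_graftat1vertex}, whose sign (ensured by Theorem~\ref{thm_subgraphmatchpoly}) propagates to the range $x\geq\rho(P_m^r)$. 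The inequality \eqref{e2} follows at once from \eqref{e1} together with Theorem~\ref{lem_eiggraph-eigpowergraph} and the classical values $\rho(P_m)=2\cos\frac{\pi}{m+2}$ and $\rho(S_m)=\sqrt{m}$. The main obstacle I anticipate is the structural step of guaranteeing a terminal branching vertex carrying at least two pendent-path branches, which the skeleton-tree argument above handles cleanly.
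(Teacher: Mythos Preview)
Your inductive scheme has a genuine gap at the base case. You claim that $C(\mathcal{T})=0$ (i.e.\ every vertex has degree at most $2$) forces $\mathcal{T}\cong P_m^r$, but this is false for $r\geq 3$: a single edge of an $r$-uniform supertree may contain three or more intersection vertices, each of degree exactly $2$. Concretely, the supertree $D_{m,r}$ from Section~6 (a loose path $P_{m-1}^r$ with one extra pendent edge attached at a \emph{core} vertex of $e_2$) satisfies $C(D_{m,r})=0$ yet $D_{m,r}\not\cong P_m^r$. Your induction on $C(\mathcal{T})$ therefore terminates at a much larger class than $\{P_m^r\}$, and Theorem~\ref{thm_graftat1vertex} alone cannot finish the job.

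The paper addresses exactly this by invoking \emph{both} Theorem~\ref{thm_graftat1vertex} and Theorem~\ref{thm_graftat2adjvertex}: once all vertex degrees are at most $2$, one still has to merge pendent paths hanging off \emph{distinct} vertices of a common edge, and that is precisely what Theorem~\ref{thm_graftat2adjvertex} (grafting at two adjacent vertices $u,v\in e$) provides. A clean fix is to replace $C(\mathcal{T})$ by a two-part complexity, say
\[
\widetilde{C}(\mathcal{T})\;=\;\sum_{v:\deg(v)\geq 3}(\deg(v)-2)\;+\;\sum_{e\in E(\mathcal{T})}\bigl(\#\{\text{intersection vertices in }e\}-2\bigr)_+,
\]
and to run your same leaf-of-skeleton argument first on high-degree vertices (via Theorem~\ref{thm_graftat1vertex}) and then on edges with $\geq 3$ intersection vertices (via Theorem~\ref{thm_graftat2adjvertex}); only when $\widetilde{C}(\mathcal{T})=0$ is $\mathcal{T}$ genuinely forced to be $P_m^r$. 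Your treatment of the upper bound and of the numerical inequality~\eqref{e2} is fine.
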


\begin{figure}[h]
\begin{center}
\includegraphics[scale=0.8]{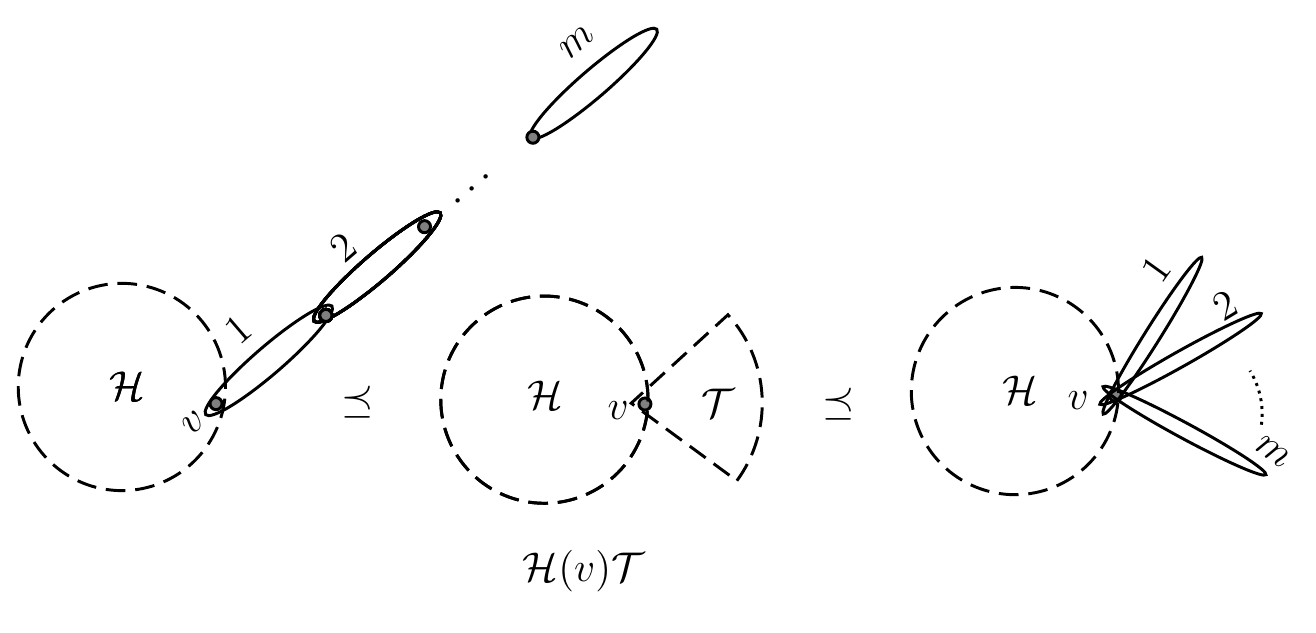}
\caption{Supertree $\mathcal{H}(v)\mathcal{T}$, $\mathcal{T}$ with $m$ edges}.
\end{center}\label{figHPTS}
\end{figure}

Actually, using Theorems \ref{thm_graftat1vertex} and \ref{thm_graftat2adjvertex}, we can deduce the following more general   result.

\begin{theorem}\label{thm-pendenttree-star-path}
Let  $\mathcal{H}$ be an  $r$-uniform supertree, and $v$ a non-isolated vertex of $\mathcal{H}$. Let  $\mathcal{H}(v)\mathcal{T}$ denote the supertree  obtained from $\mathcal{H}$ together with an attached supertree  $\mathcal{T}$ at $v$ of $\mathcal{H}$, see Fig.~3. Then
$$
\mathcal{H}(v)P_m^r\preceq \mathcal{H}(v)\mathcal{T}\preceq \mathcal{H}(v)S_m^r,
$$
where the left--hand side equality holds if and only if $\mathcal{T} \cong P_m^r$ with $v$ as its end vertex
whereas the right--hand side equality holds if and only if $\mathcal{T} \cong S_m^r$ with $v$ as its center\,.
\end{theorem}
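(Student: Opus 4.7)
The plan is to prove the two inequalities separately, in each direction by iteratively applying a monotonicity operation until the corresponding extremal shape is reached; in both cases strict inequality holds away from the extremal, which yields the ``if and only if'' characterizations.

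For the upper bound $\mathcal{H}(v)\mathcal{T}\preceq\mathcal{H}(v)S_m^r$, I would induct on the quantity $k(\mathcal{T}):=|\{e\in E(\mathcal{T})\colon v\notin e\}|$, using edge-releasing. When $k(\mathcal{T})=0$, every edge of $\mathcal{T}$ contains $v$, and the linearity of $\mathcal{T}$ forces $\mathcal{T}\cong S_m^r$ with $v$ at the center, giving equality. If $k(\mathcal{T})>0$, pick an edge $e\in E(\mathcal{T})$ with $v\in e$ lying on a shortest path in $\mathcal{T}$ from $v$ to some edge avoiding $v$; such an $e$ has a vertex $u\in e\setminus\{v\}$ with $\deg_{\mathcal{T}}(u)\geq 2$, so $e$ is non-pendent. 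Applying Lemma \ref{lem-edgerelease} to $\mathcal{H}(v)\mathcal{T}$ at $e$ (released at $v$) produces a supertree $\mathcal{H}(v)\mathcal{T}'$ with $\mathcal{H}(v)\mathcal{T}\prec\mathcal{H}(v)\mathcal{T}'$; since all edges that were incident to vertices of $e\setminus\{v\}$ are now incident to $v$, we have $k(\mathcal{T}')<k(\mathcal{T})$, and the induction proceeds.

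For the lower bound $\mathcal{H}(v)P_m^r\preceq\mathcal{H}(v)\mathcal{T}$, I would iterate the single-vertex grafting of Theorem \ref{thm_graftat1vertex}. If $\mathcal{T}\cong P_m^r$ with $v$ as an end vertex, equality holds. Otherwise, pick an intersection vertex $w$ of $\mathcal{T}$ at maximum distance from $v$; by distance maximality each component of $\mathcal{T}-w$ not containing $v$ is itself a pendent path attached at $w$, and $\deg_{\mathcal{T}}(w)\geq 2$ guarantees at least two such pendent paths exist. Letting their lengths be $p\geq q\geq 1$, Theorem \ref{thm_graftat1vertex} applied inside the ambient supertree $\mathcal{H}(v)\mathcal{T}$ at $w$ gives $\mathcal{H}(v)\mathcal{T}\succ\mathcal{H}(v)\mathcal{T}'$, where $\mathcal{T}'$ has $(p,q)$ replaced by $(p+1,q-1)$. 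Iterating until $q=0$ merges each pair of pendent paths at $w$ into a single one, strictly reducing the branching weight $\sum_{w'}\max(0,\deg_{\mathcal{T}}(w')-2)$; the process terminates at $\mathcal{H}(v)P_m^r$.

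The most delicate step is the lower bound: one must argue that repeated grafting converges specifically to the pendent path rooted at $v$ rather than to some other loose path that would place $v$ in the interior. Always selecting the grafting vertex $w$ at maximum distance from $v$ (and excluding $v$ itself unless $\mathcal{T}$ has become the trivial pendent edge at $v$) guarantees that each grafting step only consolidates branches in regions of $\mathcal{T}$ farther from $v$, preserving $v$ as an end vertex throughout. A secondary point of care is to ensure that the grafting and edge-releasing operations are applied strictly inside $\mathcal{T}$ and not across the interface with $\mathcal{H}$, so that $\mathcal{H}$ itself is unchanged and the identifications that define $\mathcal{H}(v)\mathcal{T}'$ make sense in each step.
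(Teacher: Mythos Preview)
Your upper-bound argument via edge-releasing (Lemma~\ref{lem-edgerelease}) is correct and perhaps cleaner than what the paper hints at; the induction on $k(\mathcal{T})$ terminates exactly at the hyperstar, and your observation that releasing an edge $e\ni v$ of $\mathcal{T}$ never disturbs $\mathcal{H}$ (because edges of $\mathcal{H}$ can meet $e$ only at $v$) is the right point of care.

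The lower-bound argument, however, has a genuine gap. You claim that if $w$ is an intersection vertex of $\mathcal{T}$ at maximum distance from $v$, then $\deg_{\mathcal{T}}(w)\ge 2$ forces at least two pendent paths at $w$. This fails when $\deg_{\mathcal{T}}(w)=2$: one edge at $w$ goes back toward $v$ and is \emph{not} a pendent path from $w$, so only a single pendent edge hangs off $w$, and Theorem~\ref{thm_graftat1vertex} cannot be applied. Concretely, for $r=3$ take $\mathcal{T}$ with edges $e_0=\{v,a,b\}$, $e_1=\{a,c,d\}$, $e_2=\{b,e,f\}$; every vertex has degree at most $2$, yet $\mathcal{T}$ is not a pendent path from $v$. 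The farthest intersection vertices are $a$ and $b$, each carrying exactly one pendent edge, so your one-vertex grafting step is unavailable anywhere.

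This is precisely why the paper invokes Theorem~\ref{thm_graftat2adjvertex} in addition to Theorem~\ref{thm_graftat1vertex}: when the ``branching'' occurs inside an edge (an edge with three or more intersection vertices) rather than at a vertex of degree $\ge 3$, you need the two-adjacent-vertex grafting to merge the two pendent paths hanging off distinct vertices of that edge. In the example above, Theorem~\ref{thm_graftat2adjvertex} applied at $e_0$ with the pair $(a,b)$ and $p=q=1$ yields a strictly smaller supertree that \emph{is} a pendent path from $v$. Your iterative scheme can be repaired by adding this second move: at each stage, either find a vertex with two pendent paths and apply Theorem~\ref{thm_graftat1vertex}, or find an edge with two distinct intersection vertices each carrying a pendent path and apply Theorem~\ref{thm_graftat2adjvertex}. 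With both tools, the branching weight you defined still strictly decreases and the process terminates at $\mathcal{H}(v)P_m^r$.
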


\section{Extremal supertrees with given diameter}

Let $S(m,d,r)$ be the set of $r$-uniform supertrees with $m$ edges and diameter $d$. Xiao et. al \cite{XiaoWangDu-supertree-diam-18} determined  the  first two  largest spectral radii of  supertress in $S(m,d,r)$. In this section, we determine the first $\lfloor\frac{d}{2}\rfloor+1$ largest spectral radii of supertrees in $S(m,d,r)$
by using  edge-grafting operations and comparing matching polynomials of supertrees.

Let $\mathcal{H}$ be an $r$-uniform hypergraph and $u$  a vertex of $\mathcal{H}$.
 Let $P_d^r=(v_1,e_1,v_2,e_2,\ldots,  e_d,v_{d+1})$ be a loose path of length $d$.
Denote by $P_d^r(v_i,u)\mathcal{H}$ and $P_d^r(e_j,u)\mathcal{H}$ the hypergraphs   obtained  by identifying vertex $u$ of $\mathcal{H}$ with  vertex $v_i$ of $P_d^r$ and  a core vertex   of $P_d^r$ in $e_j$  respectively (see Fig.~4).

\begin{figure}[h]
\begin{center}
\includegraphics[scale=0.7]{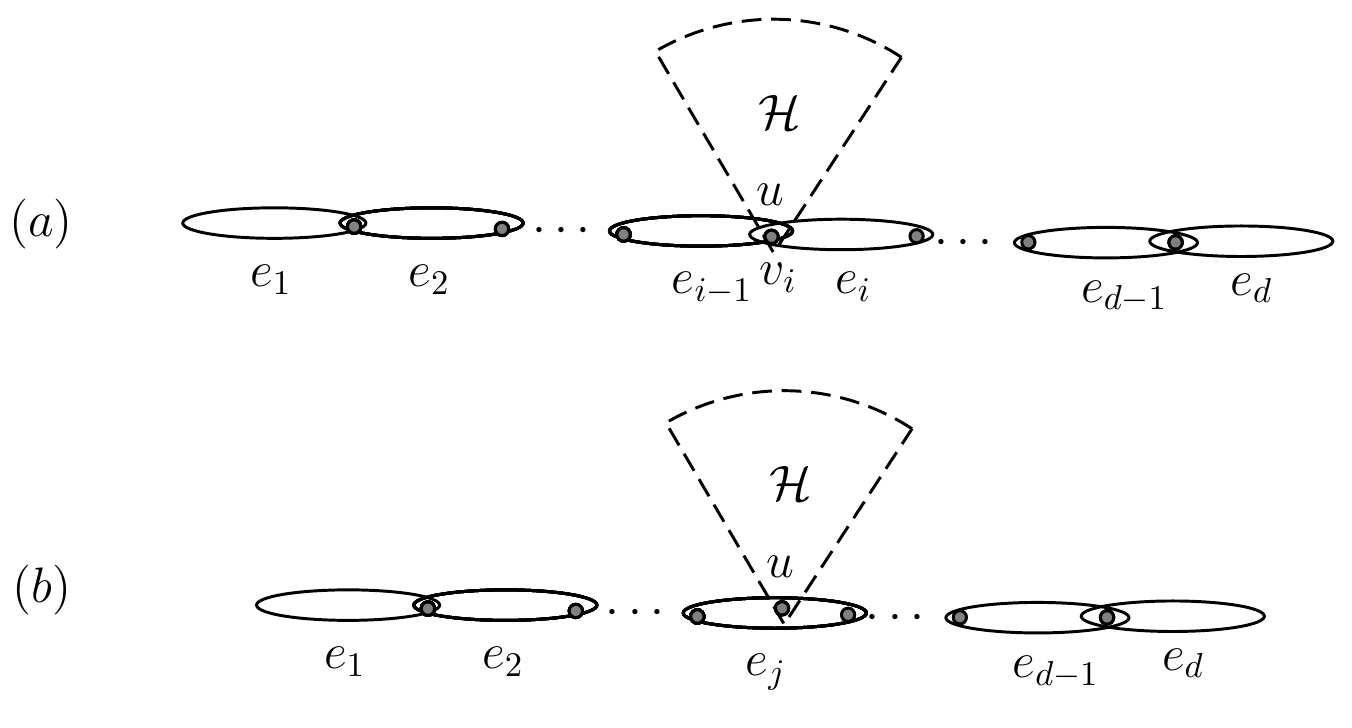}
\caption{Supertrees  (a)  $P_d^r(v_i,u)\mathcal{H}$; (b)    $P_d^r(e_j,u)\mathcal{H}$}.
\end{center}\label{figpdh}
\end{figure}

As an immediate application of Theorems~\ref{thm_graftat1vertex} and \ref{thm_graftat2adjvertex}, we have the following result.
\begin{theorem}\label{thm_verver-edgeedge-veredge}
Let $\mathcal{T}$ be an $r$-uniform supertree,  $r\geq3$. Then
\begin{enumerate}
  \item $\rho(P_d^r(v_{i},u)\mathcal{T})>\rho(P_d^r(v_{j},u)\mathcal{T})$,  if  $2\leq j<i\leq \lfloor d/2\rfloor+1$;
   \item  $\rho(P_d^r(e_{i},u)\mathcal{T})>\rho(P_d^r(e_{j},u)\mathcal{T})$,  if $2\leq j<i\leq \lceil d/2\rceil$;
    \item $\rho(P_d^r(e_{i},u)\mathcal{T})<\rho(P_d^r(v_{i},u)\mathcal{T})$, if $i=2,3,\ldots,d$;
    \item $\rho(P_d^r(e_{i},u)\mathcal{T})<\rho(P_d^r(v_{i+1},u)\mathcal{T})$, if $i=2,3,\ldots,d-1$.
\end{enumerate}
\end{theorem}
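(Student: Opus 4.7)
The key idea is to present each of the supertrees appearing in the statement as being built from a common base supertree by attaching one or two pendent paths, so that Theorems~\ref{thm_graftat1vertex} and~\ref{thm_graftat2adjvertex} can be applied directly and iteratively to compare their spectral radii.

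For (a), identify $P_d^r(v_k,u)\mathcal{T}$ with $\mathcal{T}(u;\,d-k+1,\,k-1)$: the two pendent paths of lengths $d-k+1$ and $k-1$ glued at the vertex $u$ of $\mathcal{T}$. The restriction $i\le\lfloor d/2\rfloor+1$ ensures $d-k\ge k\ge 1$ at every step between $k=j$ and $k=i-1$, so Theorem~\ref{thm_graftat1vertex} successively yields $\mathcal{T}(u;\,d-k,\,k)\succ\mathcal{T}(u;\,d-k+1,\,k-1)$, and chaining the strict inequalities gives (a). For (b), introduce the enlarged base $\mathcal{T}^{\star}:=\mathcal{T}\cup e$ obtained by adjoining one pendent edge $e$ at $u$; fixing two distinct leaves $v',v''\in e$ of $\mathcal{T}^{\star}$, a direct check gives $P_d^r(e_k,u)\mathcal{T}\cong(\mathcal{T}^{\star})^{(1)}(v',v'';\,d-k,\,k-1)$, with $e$ playing the role of the edge $e_k$ of the loose path. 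The bound $i\le\lceil d/2\rceil$ keeps $d-k\ge k-1\ge 1$ throughout, so Theorem~\ref{thm_graftat2adjvertex} may be iterated exactly as in the proof of (a).

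For (c) and (d), keep working with the base $\mathcal{T}^{\star}$ and notice two further identifications: $P_d^r(v_i,u)\mathcal{T}\cong(\mathcal{T}^{\star})^{(1)}(u,v';\,i-1,\,d-i)$ and $P_d^r(v_{i+1},u)\mathcal{T}\cong(\mathcal{T}^{\star})^{(1)}(u,v';\,d-i,\,i-1)$. These are valid because a pendent path attached at a leaf of $e$ fuses with $e$ into a single pendent path of length one greater emanating from $u$. By contrast $P_d^r(e_i,u)\mathcal{T}\cong(\mathcal{T}^{\star})^{(1)}(v',v'';\,i-1,\,d-i)$ anchors both pendent paths at leaves of $e$. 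Hence (c) and (d) amount to showing that anchoring a pendent path at $u$, where the bulk of $\mathcal{T}$ sits, strictly beats anchoring it at a leaf $v''$ of $e$. I would prove this by further absorbing the pendent path of length $d-i$ at $v'$ into an enlarged base $\mathcal{T}^{\star\star}$ in which $u$ and $v''$ still both lie in the edge $e$; then $P_d^r(v_i,u)\mathcal{T}$ and $P_d^r(e_i,u)\mathcal{T}$ both take the form $(\mathcal{T}^{\star\star})^{(1)}(u,v'';\,p,\,q)$ with $p+q=i-1$, and the length-$(i-1)$ pendent path can be shifted one edge at a time from $v''$ toward $u$ by invoking Theorem~\ref{thm_graftat2adjvertex} at each step. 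The same argument, with the roles of the two pendent paths exchanged, proves (d).

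The delicate point in (c) and (d) is the boundary of Theorem~\ref{thm_graftat2adjvertex}, whose hypothesis $p\ge q\ge 1$ excludes the two extreme endpoints $(i-1,0)$ and $(0,i-1)$ of the shift described above. I would circumvent this either by incorporating one more outermost edge into the base so that the rebalancing chain stays strictly inside $p,q\ge 1$, or by a direct matching-polynomial manipulation via Theorem~\ref{thm_matchingpoly}(b)--(c) on the outermost pendent edge---mirroring the endgame already used in the proof of Theorem~\ref{thm_graftat2adjvertex} itself.
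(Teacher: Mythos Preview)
Your treatment of (a) and (b) is correct and coincides with the paper's: both identify $P_d^r(v_i,u)\mathcal{T}$ with $\mathcal{T}(u;i-1,d-i+1)$ and $P_d^r(e_i,u)\mathcal{T}$ with $(\mathcal{T}')^{(1)}(v_i,v_{i+1};i-1,d-i)$ for the same base $\mathcal{T}'=\mathcal{T}^{\star}$, and then iterate Theorems~\ref{thm_graftat1vertex} and~\ref{thm_graftat2adjvertex}.

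For (c) and (d), however, your plan has a genuine gap that is not merely a boundary issue. After absorbing the length-$(d-i)$ path into $\mathcal{T}^{\star\star}$, you are comparing $(\mathcal{T}^{\star\star})^{(1)}(u,v'';\,i-1,\,0)$ with $(\mathcal{T}^{\star\star})^{(1)}(u,v'';\,0,\,i-1)$ and propose to ``shift one edge at a time from $v''$ toward $u$'' via Theorem~\ref{thm_graftat2adjvertex}. But that theorem asserts $\mathcal{T}^{(1)}(u,v;p,q)\succ\mathcal{T}^{(1)}(u,v;p+1,q-1)$ only when $p\ge q\ge1$; by symmetry in $u,v$ it says precisely that along the segment $p+q=i-1$ the spectral radius is \emph{unimodal with its maximum at the balanced configuration}. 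It therefore lets you climb from either extreme $(i-1,0)$ or $(0,i-1)$ up to the middle, but it cannot compare the two extremes to each other---that comparison depends on the asymmetry of the base at $u$ versus $v''$, which Theorem~\ref{thm_graftat2adjvertex} never sees. Padding the base with one more edge does not fix this: you still have to cross the balanced midpoint, and the chain of inequalities reverses direction there. The vague fallback to ``direct matching-polynomial manipulation'' would amount to proving a new lemma, not applying the stated tools.

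The paper handles (c) and (d) with a different instrument: the edge-moving operation (Theorem~\ref{lem-edgemoving}), which does exploit the asymmetry through the principal eigenvector. Starting from $P_d^r(e_i,u)\mathcal{T}$, one compares $x_u$ and $x_{v_i}$: if $x_{v_i}\ge x_u$, move all edges of $E_u\cap E(\mathcal{T})$ from $u$ to $v_i$; if $x_u\ge x_{v_i}$, move $e_{i-1}$ from $v_i$ to $u$. Either move strictly increases the spectral radius, and both resulting supertrees are isomorphic to $P_d^r(v_i,u)\mathcal{T}$, giving (c); the same argument with $v_{i+1}$ in place of $v_i$ gives (d).
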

\begin{proof}
Note that  $P_d^r(v_{i},u)\mathcal{T}$ and $P_d^r(e_{i},u)\mathcal{T}$ can  be  depicted  as   $\mathcal{T}(u;i-1, d-i+1)$ and    $(\mathcal{T}')^{(1)}(v_i,v_{i+1};i-1, d-i)$ respectively, where $\mathcal{T}'$ denotes the supertree consists of  $\mathcal{T}$ and $e_i$.
The first two assertions follow directly from Theorem~\ref{thm_graftat1vertex} and Theorem \ref{thm_graftat2adjvertex} respectively.

Let $H_1$ and $H_2$ denote the two supertrees obtained from $P_d^r(e_{i},u)\mathcal{T}$  by   moving all edges in  $E_u\cap E(\mathcal{T})$ from $u$ to $v_i$ and   moving the edge $e_{i-1}$ from $v_i$ to $u$, respectively. By Lemma~\ref{lem-edgemoving}, we have $\rho(P_d^r(e_{i},u)\mathcal{T})<\max\{\rho(H_1), \rho(H_2)\}$. However, $H_1\cong H_2\cong P_d^r(v_{i},u)\mathcal{T}$  and assertion (c) holds. Using the similar approach, we can show  the last assertion holds.
\end{proof}

In fact, the last two assertions in Theorem~\ref{thm_verver-edgeedge-veredge} can be generalized as follows.

\begin{theorem}\label{thm_path-verH<edgeH}
Let $\mathcal{T}$ be an $r$-uniform supertree and $P_d^r$ be a loose path of length $d$, with  $d\geq3$ and $r\geq3$. Then  for any $2\leq i\leq d$,  we have
  \[
   P_d^r(e_{\lceil\frac{d}{2}\rceil},u)\mathcal{T}\prec P_d^r(v_{i},u)\mathcal{T}.
  \]
\end{theorem}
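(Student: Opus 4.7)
The plan is to compute the matching polynomials $\varphi(\mathcal{A})$ and $\varphi(\mathcal{B}_i)$ of $\mathcal{A} := P_d^r(e_{\lceil d/2\rceil},u)\mathcal{T}$ and $\mathcal{B}_i := P_d^r(v_i,u)\mathcal{T}$ in a common shape, subtract them, and reduce the resulting inequality to a matching-polynomial comparison between small loose-path superforests. Set $k = \lceil d/2\rceil$ and abbreviate $P_j = \varphi(P_j^r,x)$, $T = \varphi(\mathcal{T},x)$, $T' = \varphi(\mathcal{T}-u,x)$, $S = \sum_{f \in E_u(\mathcal{T})}\varphi(\mathcal{T}-V(f),x)$. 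First I would apply part~(b) of Theorem~\ref{thm_matchingpoly} to $\mathcal{A}$ at the edge $e_k$ and part~(c) to $\mathcal{B}_i$ at the vertex $v_i$ (with $J=\{e_{i-1},e_i\}$); then, using $T = xT' - S$ (part~(c) of Theorem~\ref{thm_matchingpoly} at $u$ in $\mathcal{T}$) together with the two auxiliary decompositions $\varphi(P_d^r) = x^{r-2}P_{k-1}P_{d-k} - x^{2(r-2)}P_{k-2}P_{d-k-1}$ and $\varphi(P_d^r) = x^{2r-4}\bigl[xP_{i-2}P_{d-i} - P_{i-3}P_{d-i} - P_{i-2}P_{d-i-1}\bigr]$ obtained by the analogous applications to $P_d^r$ itself, both polynomials telescope to the parallel compact form
\[
\varphi(\mathcal{A}) = \varphi(P_d^r)\,T' - x^{r-3}P_{k-1}P_{d-k}\,S, \qquad \varphi(\mathcal{B}_i) = \varphi(P_d^r)\,T' - x^{2r-4}P_{i-2}P_{d-i}\,S,
\]
so that
\[
\varphi(\mathcal{A}) - \varphi(\mathcal{B}_i) = x^{r-3}\,S\bigl[\,x^{r-1}P_{i-2}P_{d-i} - P_{k-1}P_{d-k}\,\bigr].
\]

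It then remains to show the bracket is non-negative on $[\rho(\mathcal{A}),\infty)$ and strictly positive at $x = \rho(\mathcal{A})$. I would rewrite the bracket as $\varphi(P_{i-2}^r \cup P_{d-i}^r \cup N_{r-1}) - \varphi(P_{k-1}^r \cup P_{d-k}^r)$ and prove the inequality by chaining through the intermediate superforest $P_{i-2}^r \cup P_{d-i+1}^r$. (A) Adding one hyperedge on the $r-1$ isolates of $N_{r-1}$ together with an endpoint of $P_{d-i}^r$ produces $P_{i-2}^r \cup P_{d-i+1}^r$, so the proof of Theorem~\ref{thm_subgraphmatchpoly} goes through verbatim for this superforest pair and yields $\varphi(P_{i-2}^r \cup P_{d-i}^r \cup N_{r-1}) \ge \varphi(P_{i-2}^r \cup P_{d-i+1}^r)$. (B) The two two-path superforests $P_{i-2}^r \cup P_{d-i+1}^r$ and $P_{k-1}^r \cup P_{d-k}^r$ both have total path length $d-1$, and since $k = \lceil d/2\rceil$ makes $(k-1,d-k)$ the most balanced partition of $d-1$ (its parts differ by at most one), Proposition~\ref{prop_powertree-pa-pb-matchpoly} gives $\varphi(P_{i-2}^r \cup P_{d-i+1}^r) \ge \varphi(P_{k-1}^r \cup P_{d-k}^r)$. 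Each comparison holds on $[\rho(P_d^r),\infty)$, and $\rho(P_d^r) \le \rho(\mathcal{A})$ by Theorem~\ref{YANGYANG-SIAM-10}.

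For strictness at $x = \rho(\mathcal{A})$, the defect in~(A) equals $\varphi(P_{i-2}^r \cup P_{d-i-1}^r \cup N_{r-2})$ (or its natural boundary specialization when $i \in \{2,d\}$), and this is strictly positive at $x = \rho(\mathcal{A})$ since the corresponding superforest is a proper partial hypergraph of $\mathcal{A}$, hence has spectral radius strictly less than $\rho(\mathcal{A})$; and $S(\rho(\mathcal{A})) > 0$ because $\deg_{\mathcal{T}}(u) \ge 1$ and each $\mathcal{T}-V(f)$ is a proper partial hypergraph of $\mathcal{A}$. Together these give $\varphi(\mathcal{A}) - \varphi(\mathcal{B}_i) > 0$ at $x = \rho(\mathcal{A})$, establishing $\mathcal{A} \prec \mathcal{B}_i$. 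The main technical obstacle I anticipate is the first step: the two auxiliary $\varphi(P_d^r)$-identities must act as precisely the right telescoping ``glue'' to absorb the raw outputs of (b) and (c) into the symmetric compact form, and the boundary indices $i \in \{2,d\}$ (where some $P_{-1}^r$ formally appears in the intermediate computation) will need a brief direct verification, with the convention that $P_{-1}^r$ denotes the empty hypergraph, to confirm that the compact formula for $\varphi(\mathcal{B}_i)$ holds uniformly.
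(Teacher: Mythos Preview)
Your proposal is correct and follows essentially the same strategy as the paper: derive compact expressions for the two matching polynomials, subtract to isolate the bracket $x^{r-1}P_{i-2}P_{d-i} - P_{k-1}P_{d-k}$, and control that bracket via Theorem~\ref{thm_subgraphmatchpoly} and Proposition~\ref{prop_powertree-pa-pb-matchpoly}. Two minor points of comparison: (i) the paper obtains the compact forms in a single step by applying part~(c) of Theorem~\ref{thm_matchingpoly} at $u$ with $J$ equal to the set of \emph{all} edges of $\mathcal{T}$ incident to $u$, which immediately yields $\varphi(\mathcal{A}) = \varphi(P_d^r)\,T' - x^{r-3}P_{k-1}P_{d-k}\,S$ and $\varphi(\mathcal{B}_i) = \varphi(P_d^r)\,T' - x^{2r-4}P_{i-2}P_{d-i}\,S$ without any telescoping or auxiliary identities (this sidesteps exactly the ``technical obstacle'' you anticipated); (ii) for the bracket, the paper chains through $P_{i-1}^r \cup P_{d-i}^r$ rather than your $P_{i-2}^r \cup P_{d-i+1}^r$, but these are symmetric choices and either works.
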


\begin{proof}
Suppose that $e_1,e_2,\ldots,e_s$ are all edges  incident with vertex $u$ in $\mathcal{T}$. Applying (c) of Theorem~\ref{thm_matchingpoly} to $P_d^r(e_{\lceil\frac{d}{2}\rceil},u)\mathcal{T}$ and edges $e_1,e_2,\ldots,e_s$, we have
\begin{align*}
   \varphi(P_d^r(e_{\lceil\frac{d}{2}\rceil},u)\mathcal{T}, x)=& \varphi(P_d^r)\varphi(\mathcal{T}-u, x)-x^{r-3}\varphi(P_{\lfloor\frac{d}{2}\rfloor}^r)\varphi(P_{\lceil\frac{d}{2}\rceil-1}^r)\sum_{i=1}^s\varphi(\mathcal{T}-V(e_i), x).
\end{align*}
Similarly, \begin{align*}
   \varphi(P_d^r(v_{i},u)\mathcal{T}, x) =& \varphi(P_d^r)\varphi(\mathcal{T}-u, x)-x^{2r-4}\varphi(P_{i-2}^r)\varphi(P_{d-i}^r)\sum_{i=1}^s\varphi(\mathcal{T}-V(e_i), x).
\end{align*}
 Then
\begin{align}\label{eq27}
  & \varphi(P_d^r(e_{\lceil\frac{d}{2}\rceil},u)\mathcal{T}, x)-\varphi(P_d^r(v_{i},u)\mathcal{T}, x)\nonumber\\
   &=x^{r-3}\sum_{i=1}^s\varphi(\mathcal{T}-V(e_i), x)[x^{r-1}\varphi(P_{i-2}^r\cup P_{d-i}^r,x)-\varphi(P_{\lfloor\frac{d}{2}\rfloor}^r\cup P_{\lceil\frac{d}{2}\rceil-1}^r,x)].
\end{align}

It is easy to see that $P_{i-2}^r\cup N_{r-1}\cup P_{d-i}^r\prec P_{i-1}^r\cup P_{d-i}^r$ as  $P_{i-2}^r\cup N_{r-1}$ is a proper partial hypergraph of  $P_{i-1}^r$. Meanwhile, by Theorem \ref{prop_powertree-pa-pb-matchpoly}, we have $P_{i-1}^r\cup P_{d-i}^r\preceq P_{\lfloor\frac{d}{2}\rfloor}^r\cup P_{\lceil\frac{d}{2}\rceil-1}^r$. Then by Theorems ~\ref{YANGYANG-SIAM-10}, ~\ref{thm_subgraphmatchpoly} and \eqref{eq27},
we have $\varphi(P_d^r(e_{\lceil\frac{d}{2}\rceil},u)\mathcal{T}, x)-\varphi(P_d^r(v_{i},u)\mathcal{T}, x)>0$ if $x\geq\rho(P_d^r(e_{\lceil\frac{d}{2}\rceil},u)\mathcal{T})$. The proof is finished.
\end{proof}

For convenience, we  adopt the notation  from \cite{GuoShao-tree-diam-06}.
Let $m,d,i$ be integers with $2\leq i\leq d\leq m-1$, and $T_{m,d}$ be the set of trees of size $m$ and diameter $d$.
We use $P=(v_1,e_1,v_2,e_2,\ldots, v_d, e_d, v_{d+1})$ to denote the   path  of length $d$.

 Let $T_{(m,d)}(i)$ be the tree on $m$ edges (with diameter $d$) obtained from the path  $P$ by attaching $m-d$ new pendent edges  to the vertex $v_i$.
Let $ \acute{T}_{(m,d)}=\{T_{(m,d)}(i): i=2,3,\ldots, d\}$.

Let $m,d,i,j$ be integers with $2\leq i\neq j\leq d\leq m-2$.  Let $T_{(m,d)}(i,j)$  be the tree  on $m$ edges  (with diameter $d$) obtained from the path  $P$   by attaching  $m-d-1$ new pendent edges to the vertex $v_{i}$ and a new pendent edge to $v_j$, respectively. Let $T''=T_{(m,d)}(\lceil\frac{d}{2}\rceil,\lceil\frac{d}{2}\rceil+1)$.

\begin{lemma}{\rm (\cite{GuoShao-tree-diam-06})}\label{lem-T''}
  For any tree $T\in T_{m,d}\setminus\{\acute{T}_{(m,d)}\cup T''\}$ with  $m\geq d+3\geq6$,  we have $ \rho(T) <\rho(T'')$.
\end{lemma}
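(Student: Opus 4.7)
My plan is to reduce any candidate $T \in T_{m,d}\setminus(\acute{T}_{(m,d)}\cup\{T''\})$ to $T''$ by a finite sequence of grafting operations, each of which strictly increases the spectral radius and preserves the diameter. Fix a diametral path $P=(v_1,e_1,v_2,\ldots,e_d,v_{d+1})$ in $T$. Because $P$ is diametral, every subtree hanging at an interior vertex $v_i$ has depth at most $\min(i-2,d-i)$; repeated applications of Theorem~\ref{thm-lifeng79}(c) then straighten each hanging subtree into a pendent path at $v_i$ and, by peeling off one edge at a time, collapse it further into a bundle of pendent edges at $v_i$. Each such move strictly increases $\rho$ and keeps $\diam=d$, so we may restrict attention to caterpillars: trees obtained from $P$ by attaching $n_i\ge 0$ pendent edges at each interior $v_i$, with $\sum_{i=2}^d n_i=m-d$.

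Next, I consolidate bundles. If more than two of the $n_i$'s are positive, or if exactly two are positive but at non-adjacent vertices $v_i,v_j$ with $|i-j|\ge 2$, an application of Theorem~\ref{thm-lifeng79}(c) moves a pendent edge between the two bundles and strictly increases $\rho$ while preserving the diameter. After finitely many moves the $m-d$ extra edges are supported on two adjacent interior vertices $v_i,v_{i+1}$ (the single-support case is excluded since $T\notin\acute{T}_{(m,d)}$). Among splits $(n_i,n_{i+1})$ with $n_i\ge n_{i+1}\ge 1$ and $n_i+n_{i+1}=m-d$, I then claim $(m-d-1,1)$ is the unique maximizer: the graph analogue of Theorem~\ref{thm_graftat2adjvertex}, proved by splitting the matching polynomial at the common edge $\{v_i,v_{i+1}\}$, shows that moving a pendent edge from the smaller bundle at $v_{i+1}$ to the larger bundle at $v_i$ strictly increases $\rho$ whenever $n_{i+1}\ge 2$. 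This brings $T$ into the family $T_{(m,d)}(i,i+1)$. Finally, a further application of Theorem~\ref{thm-lifeng79}(c) to the bundle together with its adjacent diametral-path segments shifts the bundle toward the midpoint of $P$ and strictly increases $\rho$ until $i=\lceil d/2\rceil$, yielding exactly $T''$.

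The main obstacle is that Theorem~\ref{thm-lifeng79} is stated for single paths attached at two vertices, whereas the consolidation and centring moves above act on \emph{bundles} of pendent edges. Each bundle move must therefore be reduced either to a sequence of one-edge-at-a-time applications of Theorem~\ref{thm-lifeng79} or to a tailored characteristic-polynomial comparison modelled on the proof of Theorem~\ref{thm_graftat2adjvertex}. A secondary subtlety is preserving the diameter: each modification must keep $P$ as a diametral path, neither shortening nor extending the longest path in the tree. This is enforced throughout by never allowing a bundle or straightened branch at $v_i$ to exceed depth $\min(i-2,d-i)$. Provided these bookkeeping constraints are respected, the three phases above chain together into a monotone sequence $T\prec T_1\prec\cdots\prec T''$ in spectral radius, which is exactly the assertion of the lemma.
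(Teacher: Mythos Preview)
The paper does not prove this lemma at all: it is quoted verbatim from Guo and Shao \cite{GuoShao-tree-diam-06} and used as a black box. So there is no ``paper's own proof'' to compare against; your proposal has to stand on its own.

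As a standalone argument, the outline has two genuine gaps beyond the one you flag. First, your Phase~1 is stated in the wrong direction. Theorem~\ref{thm-lifeng79}(a) says that \emph{balancing} two pendent paths at a common vertex increases $\rho$, so ``straightening'' a branched hanging subtree into a single pendent path \emph{decreases} $\rho$; likewise part (c) only lets you shorten one pendent path while lengthening another sufficiently longer one. Neither gives a $\rho$-increasing route from an arbitrary hanging subtree to a pendent path. What you actually need here is the graph analogue of Theorem~\ref{thm-pendenttree-star-path}: replacing any attached subtree at $v_i$ by a star with the same number of edges centred at $v_i$ increases $\rho$. That result is available, but it is not Theorem~\ref{thm-lifeng79}(c), and your intermediate ``pendent path'' step should be dropped.

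Second, and more seriously, your exclusion of the single-support case in Phase~2 does not follow from $T\notin\acute{T}_{(m,d)}$. That hypothesis concerns the \emph{starting} tree; once you begin pushing bundles around by $\rho$-increasing moves, nothing you have said prevents all the extra edges from landing on a single vertex $v_i$, i.e.\ reaching some $T_{(m,d)}(i)\in\acute{T}_{(m,d)}$. If that happens the chain yields only $\rho(T)<\rho(T_{(m,d)}(i))$, which is useless because $\rho(T'')<\rho(T_{(m,d)}(i))$ by Theorem~\ref{thm_tree-diam-order}. To rescue the strategy you would have to design the consolidation moves so that they provably terminate at a two-adjacent-support caterpillar rather than a one-support caterpillar; this requires control you have not established, especially since (as you note) Theorem~\ref{thm-lifeng79}(c) with $p=q=1$ fails the hypothesis $p-q\ge m$ whenever the two bundles sit at distance $m\ge 1$. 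A cleaner route, closer to what Guo--Shao actually do, is to compare matching/characteristic polynomials of $T$ and $T''$ directly after reducing to caterpillars, rather than trying to realise $T''$ as the terminus of a monotone chain.
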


The following   results were obtained in \cite{GuoShao-tree-diam-06} and we shall extend  these results from trees to supertrees in this section.

\begin{theorem}{\rm (\cite{GuoShao-tree-diam-06})}\label{thm_tree-diam-order}
(a) The first  $\lfloor\frac{d}{2}\rfloor+1$ spectral radii of  trees in the set  $T_{(m,d)}$ with $m\geq d+3$ and $d\geq3$ are
$
T_{(m,d)}(\lfloor\frac{d}{2}\rfloor+1), T_{(m,d)}(\lfloor\frac{d}{2}\rfloor),\ldots,T_{(m,d)}(3),T_{(m,d)}(2), T''.
$

  (b) The first  $\lfloor\frac{d}{2}\rfloor-1$ spectral radii of  trees in the set  $T_{(m,d)}$ with $m= d+2$ and $d\geq4$ are
$
T_{(m,d)}(\lfloor\frac{d}{2}\rfloor+1), T_{(m,d)}(\lfloor\frac{d}{2}\rfloor),\ldots,T_{(m,d)}(3).
$
\end{theorem}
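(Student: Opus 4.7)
My plan is to reduce to a small candidate set using Lemma~\ref{lem-T''}, order the bulk by repeated grafting, and finally place $T''$ at the bottom of the chain via a direct matching-polynomial comparison with $T_{(m,d)}(2)$. Concretely, Lemma~\ref{lem-T''} already asserts that for $m\geq d+3\geq 6$ every tree in $T_{m,d}\setminus(\acute{T}_{(m,d)}\cup\{T''\})$ has spectral radius strictly less than $\rho(T'')$, so the first $\lfloor d/2\rfloor+1$ spectral radii must all come from $\acute{T}_{(m,d)}\cup\{T''\}$. The path reflection $i\mapsto d-i+2$ gives $T_{(m,d)}(i)\cong T_{(m,d)}(d-i+2)$, so up to isomorphism $\acute{T}_{(m,d)}$ contains exactly the $\lfloor d/2\rfloor$ trees $T_{(m,d)}(2),T_{(m,d)}(3),\ldots,T_{(m,d)}(\lfloor d/2\rfloor+1)$, and together with $T''$ these form the $\lfloor d/2\rfloor+1$ listed candidates.

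For the ordering inside $\acute{T}_{(m,d)}$, I would, for each pair $2\leq j<i\leq\lfloor d/2\rfloor+1$, view $T_{(m,d)}(j)$ as obtained from $T_{(m,d)}(i)$ by translating the attached bouquet of $m-d$ pendent edges from the more central vertex $v_i$ to the less central vertex $v_j$. This is precisely the grafting set-up of Theorem~\ref{thm-lifeng79}(a) of Li--Feng (equivalently, the $r=2$ instance of Theorem~\ref{thm_graftat1vertex}), which yields $\rho(T_{(m,d)}(j))<\rho(T_{(m,d)}(i))$ and hence the strict chain $\rho(T_{(m,d)}(2))<\cdots<\rho(T_{(m,d)}(\lfloor d/2\rfloor+1))$.

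The principal obstacle is then to show $\rho(T'')<\rho(T_{(m,d)}(2))$, placing $T''$ at the very bottom of the chain: a naive grafting only compares $T''$ with $T_{(m,d)}(\lceil d/2\rceil)$ (by moving the stray pendent edge of $T''$ from $v_{\lceil d/2\rceil+1}$ inward to $v_{\lceil d/2\rceil}$), which is far too weak. My approach is to establish the stronger matching-polynomial relation $T''\prec T_{(m,d)}(2)$ directly: expanding both $\varphi(T'',x)$ and $\varphi(T_{(m,d)}(2),x)$ by Theorem~\ref{thm_matchingpoly}(c) at the high-degree vertex $v_{\lceil d/2\rceil}$ of $T''$ and $v_2$ of $T_{(m,d)}(2)$, each polynomial reduces to an explicit combination of the matching polynomials of sub-paths of $P$; after factoring out the common $x^{m-d-1}$ the remaining difference takes the form of a path-polynomial expression to which Proposition~\ref{prop_pa-pb-matchpoly} applies, yielding strict positivity on $[\rho(T''),\infty)$ and hence $T''\prec T_{(m,d)}(2)$ as desired. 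Finally, for part (b) the grafting step that orders the $T_{(m,d)}(i)$ is unchanged, but when $m=d+2$ the hypothesis of Lemma~\ref{lem-T''} fails, so one cannot guarantee that $T''$ or $T_{(m,d)}(2)$ dominates every tree outside $\acute{T}_{(m,d)}\cup\{T''\}$; accordingly only the first $\lfloor d/2\rfloor-1$ spectral radii, namely those controlled by the grafting chain down through $T_{(m,d)}(3)$, can be asserted.
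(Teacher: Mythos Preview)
The paper does not prove this theorem: it is quoted verbatim from Guo and Shao \cite{GuoShao-tree-diam-06} (note the attribution in the theorem header and the sentence immediately preceding it, ``The following results were obtained in \cite{GuoShao-tree-diam-06} and we shall extend these results from trees to supertrees in this section''). Consequently there is no proof in the present paper to compare your proposal against; the authors treat the ordinary-tree case as a known input and only prove the supertree analogues (Theorems~\ref{thm_superteee-diam} and~\ref{thm_supertree-diam-d+2edges}), transferring the ordering via Theorem~\ref{lem_eiggraph-eigpowergraph}.

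That said, your sketch for part~(a) is a reasonable reconstruction of how one would prove the Guo--Shao result: Lemma~\ref{lem-T''} reduces the candidate set, Li--Feng grafting orders the $T_{(m,d)}(i)$, and the remaining inequality $\rho(T'')<\rho(T_{(m,d)}(2))$ is the genuine content. Your plan to handle this last step by expanding both matching polynomials at the high-degree vertex and reducing to a path comparison via Proposition~\ref{prop_pa-pb-matchpoly} is the right idea, though you have not actually carried out that computation, and it is the only nontrivial part.

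Your treatment of part~(b), however, has a real gap. You correctly observe that Lemma~\ref{lem-T''} no longer applies when $m=d+2$, but you then conclude that the grafting chain alone ``controls'' the first $\lfloor d/2\rfloor-1$ spectral radii. It does not: the grafting chain only orders the $T_{(m,d)}(i)$ among themselves. To assert that $T_{(m,d)}(\lfloor d/2\rfloor+1),\ldots,T_{(m,d)}(3)$ are the top $\lfloor d/2\rfloor-1$ trees in \emph{all} of $T_{m,d}$, you must still show that every tree outside $\acute{T}_{(m,d)}$ (here these are precisely the $T_{(m,d)}(i,j)$ with $i\neq j$) has spectral radius strictly below $\rho(T_{(m,d)}(3))$. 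That step is missing from your outline.
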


\begin{figure}[h]
\begin{center}
\includegraphics[scale=0.7]{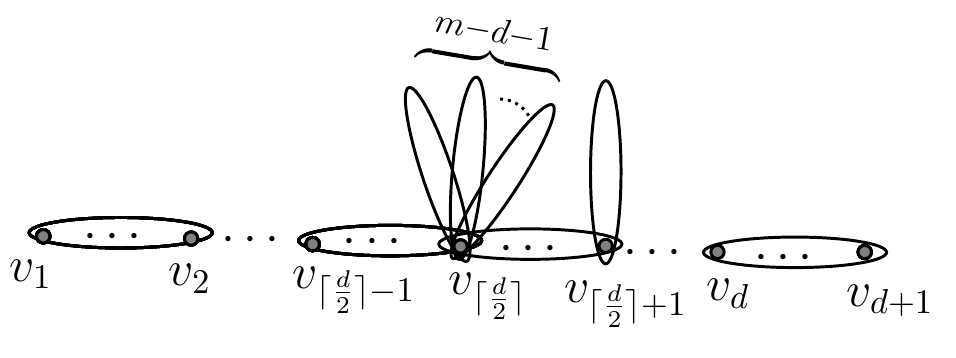}
\caption{Supertree     $ \mathcal{T}''=T_{(m,d)}^r(\lceil d/2\rceil,\lceil d/2\rceil+1)$}.
\end{center}\label{figP2prime}
\end{figure}

Let $m,d,i$ be integers with $2\leq i\leq d-1\leq m$.  Let  $T_{(m,d, r)}(i)$  be a supertree  $P_d^r(e_{i},u)\mathcal{T}$, where $T$ is a hyperstar with $m-d$ edges and $u$ as its center. Note that  $T_{(m,d,r)}(i)\cong T_{(m,d,r)}(d-i+2)$ ($2\leq i\leq d-1$).

Let $ \acute{T}_{(m,d)}^r=\{T_{(m,d)}^r(i): i=2,3,\ldots, d\}$ consisting of the $r$th power of $T_{(m,d)}(i)$ for $i=2,3,\ldots, d$, and let $\mathcal{T}'':=T_{(m,d)}^r(\lceil\frac{d}{2}\rceil, \lceil\frac{d}{2}\rceil+1)$ (see Fig.~\ref{figP2prime}).

\begin{lemma}\label{lem-T''-TmdEdge}
For any  $m\geq d+2\geq5$ and $r\geq3$, we have
  \[
  \rho(\mathcal{T}'')>\rho(T_{(m,d,r)}(\lceil d/2\rceil)).
  \]
\end{lemma}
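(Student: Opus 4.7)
The plan is to prove $\rho(\mathcal{T}'') > \rho(T_{(m,d,r)}(\lceil d/2\rceil))$ by showing that
\[
\varphi(T_{(m,d,r)}(\lceil d/2\rceil), x) - \varphi(\mathcal{T}'', x) > 0 \qquad \text{at } x = \rho(T_{(m,d,r)}(\lceil d/2\rceil)).
\]
By Theorem~\ref{thm_Zhang_matchingpolyradius}, both spectral radii are the largest positive roots of the respective matching polynomials, and each polynomial tends to $+\infty$ with $x$, so such strict positivity at $\rho(T_{(m,d,r)}(\lceil d/2\rceil))$ will force the desired inequality. Throughout, write $k := \lceil d/2\rceil$ and $p_a := \varphi(P_a^r, x)$.

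First I would compute the two matching polynomials explicitly. For $T_{(m,d,r)}(k)$, apply Theorem~\ref{thm_matchingpoly}(c) at the core vertex $w$ of $e_k$ carrying the hyperstar (so that $|E_w| = m-d+1$), combined with the formula $\varphi(S_{m-d}^r, x) = x^{(m-d)(r-1)+1-r}(x^r - (m-d))$, to obtain
\[
\varphi(T_{(m,d,r)}(k), x) = x^{(r-1)(m-d)-2}\Bigl[(x^r - (m-d))\, p_{k-1}\, p_{d-k} - x^{2(r-1)}\, p_{k-2}\, p_{d-k-1}\Bigr].
\]
For $\mathcal{T}''$, apply Theorem~\ref{thm_matchingpoly}(c) at $v_k$; the key observation is that the component of $\mathcal{T}'' - v_k$ containing $v_{k+1}$, namely the loose path from $v_{k+1}$ to $v_{d+1}$ together with the pendent edge attached at $v_{k+1}$, is isomorphic to $P_{d-k+1}^r$, since attaching a pendent edge at an endpoint of a loose hyperpath simply extends it by one edge. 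A short calculation then yields
\[
\varphi(\mathcal{T}'', x) = x^{(r-1)(m-d)-2}\Bigl[\bigl((x^r - (m-d-1))\, p_{k-2} - x^{r-1}\, p_{k-3}\bigr)\, p_{d-k+1} - x^{2(r-1)}\, p_{k-2}\, p_{d-k-1}\Bigr].
\]

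Second, I would reduce the difference algebraically. Adopting the formal conventions $p_{-1} = x^{2-r}$ and $p_{-2} = 0$ (which keep the path recurrence $p_a = x^{r-1} p_{a-1} - x^{r-2} p_{a-2}$ valid at the boundary), a one-step induction gives the Chebyshev-type identity $p_i\, p_j - p_{i-1}\, p_{j+1} = x^{(r-2)i}\, p_{j-i-1}$. Applying this identity to the cross term $p_{k-1}\, p_{d-k} - p_{k-2}\, p_{d-k+1}$, and then using the recurrence in the form $x^{r-1} p_{k-3} - p_{k-2} = x^{r-2} p_{k-4}$, the difference of the bracketed factors collapses to
\[
\Delta(x) := x^{(r-2)(k-1)}\, p_{d-2k}\,\bigl(x^r - (m-d)\bigr) + x^{r-2}\, p_{k-4}\, p_{d-k+1}.
\]

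Finally, I would verify $\Delta(x_0) > 0$ at $x_0 := \rho(T_{(m,d,r)}(k))$. Since $e_k$ together with the $m-d$ pendent edges at $w$ forms a hyperstar $S_{m-d+1}^r$ that is a proper partial hypergraph of $T_{(m,d,r)}(k)$, Theorems~\ref{YANGYANG-SIAM-10} and~\ref{thm-hyperstar-loosepath} yield $x_0 > (m-d+1)^{1/r}$, whence $x_0^r - (m-d) > 0$. For $k = \lceil d/2\rceil$ the index $d - 2k$ belongs to $\{0, -1\}$, so $p_{d-2k}(x_0) \in \{x_0,\, x_0^{2-r}\}$ is positive; together with $x_0^{(r-2)(k-1)} > 0$ this makes the first term of $\Delta$ strictly positive. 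The second term is nonnegative: for $k \geq 4$, both $P_{k-4}^r$ and $P_{d-k+1}^r$ are partial hypergraphs of $T_{(m,d,r)}(k)$, so Theorem~\ref{YANGYANG-SIAM-10} gives $p_{k-4}(x_0), p_{d-k+1}(x_0) > 0$; the boundary cases $k=2$ and $k=3$ are dispatched by the formal conventions ($p_{-2} = 0$ annihilates the second term when $k=2$, and $x^{r-2}\, p_{-1} = 1$ reduces it to $p_{d-k+1}(x_0) > 0$ when $k=3$). The main obstacle will be the algebraic collapse to $\Delta$: applying the path identity and the recurrence in a suboptimal order produces a tangled expression whose sign is opaque, and keeping track of the formal values $p_{-1}, p_{-2}$ in the boundary cases requires some care. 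Once $\Delta$ has the displayed form, positivity is transparent and the conclusion follows.
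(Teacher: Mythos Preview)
Your argument is correct, and it follows the same overall strategy as the paper---compute the difference of matching polynomials via Theorem~\ref{thm_matchingpoly}(c) and show strict positivity at $x_0=\rho(T_{(m,d,r)}(\lceil d/2\rceil))$---but the way you simplify and then estimate the difference is genuinely different. The paper stops the algebra earlier: writing $a=m-d-1$ and $b=\lceil d/2\rceil$, it expresses the difference (up to the common factor $x^{(a+1)(r-1)-2}$) as
\[
\bigl[\varphi(P_{b-1}^r\cup P_{d-b}^r)-x^{r-1}\varphi(P_{b-1}^r\cup P_{d-b-1}^r)\bigr]
+a\bigl[\varphi(P_{b-1}^r\cup P_{d-b}^r)-\varphi(P_{b-2}^r\cup P_{d-b+1}^r)\bigr],
\]
and then handles each bracket combinatorially: the first via the proper-partial-hypergraph inequality (Theorem~\ref{thm_subgraphmatchpoly}), the second via the path-pair ordering $P_{b-1}^r\cup P_{d-b}^r\succ P_{b-2}^r\cup P_{d-b+1}^r$ (Proposition~\ref{prop_powertree-pa-pb-matchpoly}). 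You instead push the algebra further with the loose-path recurrence and the Chebyshev-type identity $p_ip_j-p_{i-1}p_{j+1}=x^{(r-2)i}p_{j-i-1}$ to reach the closed form $\Delta(x)=x^{(r-2)(k-1)}p_{d-2k}(x^r-(m-d))+x^{r-2}p_{k-4}\,p_{d-k+1}$, and establish positivity from the embedded hyperstar bound $x_0^r>m-d+1$ rather than from Proposition~\ref{prop_powertree-pa-pb-matchpoly}. The paper's route is shorter and reuses machinery already built in Section~4; yours is more self-contained and gives an explicit quantitative form for the gap, at the cost of tracking the formal boundary values $p_{-1},p_{-2}$ for $k\in\{2,3\}$.
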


\begin{proof}
 For simplicity, let $a=m-d-1$ and $b=\lceil\frac{d}{2}\rceil$.
Applying (c) of Theorem~\ref{thm_matchingpoly}   to $\mathcal{T}''$ and    $m-d-1$ pendent edges attached at $v_{\lceil\frac{d}{2}\rceil}$, we have
\begin{align}\label{equ27}
\varphi(\mathcal{T}'', x)&=x^{a(r-1)}\varphi(T^r_{(d+1,d)}(b+1), x)-ax^{(a+1)(r-1)-2}\varphi(P_{b-2}^r\cup (P_{d-b+1}^r, x).
\end{align}
Applying (c) of Theorem~\ref{thm_matchingpoly}   to $T^r_{(d+1,d)}(b+1)$ and  the  pendent edge attached at $v_{\lceil\frac{d}{2}\rceil+1}$, we get
\begin{align}\label{equ28}
\varphi(T^r_{(d+1,d)}(b+1), x)&=x^{r-1}\varphi(P_d^r, x)-x^{2(r-2)}\varphi(P_{b-1}^r\cup P_{d-b-1}^r, x)
\end{align}

\noindent
Substituting \eqref{equ28} into \eqref{equ27}, we deduce
\begin{align}\label{equ29}
\varphi(\mathcal{T}'', x)&=x^{a(r-1)}\varphi(T^r_{(d+1,d)}(b+1), x)-ax^{(a+1)(r-1)-2}\varphi(P_{b-2}^r\cup P_{d-b+1}^r, x)\nonumber\\
&=x^{(a+1)(r-1)}\varphi(P_d^r, x)-x^{(a+2)(r-1)-2}\varphi(P_{b-1}^r\cup P_{d-b-1}^r, x)\nonumber\\
&-ax^{(a+1)(r-1)-2}\varphi(P_{b-2}^r\cup P_{d-b+1}^r, x)
\end{align}
Similarly,
\begin{align}\label{equ30}
\varphi(T_{(m,d,r)}(\lceil d/2\rceil), x)&=x^{(a+1)(r-1)}\varphi(P_d^r, x)-(a+1)x^{(a+1)(r-1)-2}\varphi(P_{b-1}^r\cup P_{d-b}^r, x)
\end{align}

\noindent
By \eqref{equ29} and \eqref{equ30}, we have
\begin{align}\label{e-H1-H2}
&\varphi(\mathcal{T}'', x)-\varphi(T_{(m,d,r)}(\lceil d/2\rceil), x)\nonumber\\
&=x^{(a+1)(r-1)-2}[\varphi(P_{b-1}^r\cup P_{d-b}^r, x)-x^{r-1}\varphi(P_{b-1}^r\cup P_{d-b-1}^r, x)]\nonumber\\
&+ax^{(a+1)(r-1)-2}[\varphi(P_{b-1}^r\cup P_{d-b}^r, x)-\varphi(P_{b-2}^r\cup P_{d-b+1}^r, x)].
\end{align}
Obviously,
\begin{equation*}\label{e-Pd-i-pd-i-1}
  P_{b-1}^r\cup P_{d-b}^r \succ P_{b-1}^r\cup P_{d-b-1}\cup N_{r-1}
\end{equation*}
 as $P_{d-b-1}^r\cup N_{r-1}$ is a proper partial hypergraph of $P_{d-b}^r$. Meantime, by Proposition~\ref{prop_powertree-pa-pb-matchpoly},   we have
 \begin{equation*}\label{e-pi-1pd-i+pi-2pd-i+1}
   P_{b-1}^r\cup P_{d-b}^r=P_{\lceil\frac{d}{2}\rceil-1}^r\cup P_{\lfloor\frac{d}{2}\rfloor}^r\succ P_{\lceil\frac{d}{2}\rceil-2}^r\cup P_{\lfloor\frac{d}{2}\rfloor+1}^r = P_{b-2}^r\cup P_{d-b+1}^r.
 \end{equation*}
Therefore, by Theorems ~\ref{YANGYANG-SIAM-10}, ~\ref{thm_subgraphmatchpoly} and \eqref{e-H1-H2}, $\varphi(\mathcal{T}'',x )<\varphi(T_{(m,d,r)}(\lceil d/2\rceil),x)$ if $x\geq \rho(T_{(m,d,r)}(\lceil\frac{d}{2}\rceil))$.
Consequently, $ \rho(\mathcal{T}'')>\rho(T_{(m,d,r)}(\lceil\frac{d}{2}\rceil))$.
\end{proof}

\begin{lemma}\label{lem_diam-order-outside-1class}
For any  $\mathcal{T}\in S(m,d,r)\setminus\{\acute{T}_{(m,d)}^r\cup \mathcal{T}''\}$   with $m\geq d+3$ and $d\geq3$, we have
 \[
\rho(\mathcal{T})<\rho(\mathcal{T}'').
  \]
\end{lemma}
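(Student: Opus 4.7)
The strategy is to transform the given $\mathcal{T}$ into a hypertree via a finite sequence of diameter-preserving operations each of which weakly increases $\rho$, and then compare the resulting hypertree with $\mathcal{T}''$ using the tree-case Lemma~\ref{lem-T''} together with the spectral identity of Theorem~\ref{lem_eiggraph-eigpowergraph}. I would first fix a diametral path $P=(v_0,e_1,v_1,\ldots,e_d,v_d)$ in $\mathcal{T}$ and organize the non-path edges of $\mathcal{T}$ into the maximal attached sub-supertrees rooted at the vertices of $V(P)$ or at core vertices of the path edges.

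Two types of reductions would be performed repeatedly. First, every such maximal attached sub-supertree $\mathcal{H}_v$ at a vertex $v$ on $P$ or at a core vertex of some $e_j$ would be replaced by the hyperstar with $|E(\mathcal{H}_v)|$ edges centered at $v$; by Theorem~\ref{thm-pendenttree-star-path} this strictly increases $\rho$ unless $\mathcal{H}_v$ was already that hyperstar, and the diametral path $P$ is untouched so the diameter stays equal to $d$. Second, any hyperstar attached at a core vertex $w$ of a path edge $e_j$ would be relocated to the path vertex $v_j$ (or $v_{j-1}$) using parts (c) and (d) of Theorem~\ref{thm_verver-edgeedge-veredge}, which strictly increases $\rho$ and again preserves the diameter. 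After these operations are exhausted, the resulting supertree $\mathcal{T}^{*}$ is a hypertree $T^r$ for some tree $T\in T_{m,d}$ whose non-path edges are pendent edges distributed among the path vertices, and $\rho(\mathcal{T})\le\rho(\mathcal{T}^{*})$.

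I would then split into three cases by the shape of $T$. If $T\notin \acute{T}_{(m,d)}\cup\{T''\}$, then Lemma~\ref{lem-T''} yields $\rho(T)<\rho(T'')$, and Theorem~\ref{lem_eiggraph-eigpowergraph} upgrades this to $\rho(\mathcal{T}^{*})=\rho(T)^{2/r}<\rho(T'')^{2/r}=\rho(\mathcal{T}'')$, whence $\rho(\mathcal{T})\le\rho(\mathcal{T}^{*})<\rho(\mathcal{T}'')$. If $T=T''$, then $\mathcal{T}^{*}=\mathcal{T}''$ and at least one reduction must have been strict (else $\mathcal{T}=\mathcal{T}''$, contradicting the hypothesis), giving $\rho(\mathcal{T})<\rho(\mathcal{T}'')$. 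The delicate residual case is $T=T_{(m,d)}(i)\in\acute{T}_{(m,d)}$: here either $\mathcal{T}$ is already a hypertree $S^r$ with $S$ having a non-hyperstar subtree at $v_i$ (so $S\notin \acute{T}_{(m,d)}\cup\{T''\}$ and Lemma~\ref{lem-T''} applies to $S$ directly), or, up to at most a strict application of Theorem~\ref{thm-pendenttree-star-path}, $\mathcal{T}$ coincides with $T_{(m,d,r)}(j)$ for some $j\in\{i,i+1\}$, whereupon part (b) of Theorem~\ref{thm_verver-edgeedge-veredge} together with Lemma~\ref{lem-T''-TmdEdge} yield $\rho(\mathcal{T})\le\rho(T_{(m,d,r)}(j))\le\rho(T_{(m,d,r)}(\lceil d/2\rceil))<\rho(\mathcal{T}'')$. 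Verifying that the diameter remains exactly $d$ throughout every intermediate step, and that the residual case really does reduce to one of these two sub-cases, is the main technical obstacle.
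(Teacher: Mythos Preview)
Your overall strategy---reduce to hypertrees via Theorems~\ref{thm-pendenttree-star-path} and~\ref{thm_verver-edgeedge-veredge}, then invoke Lemma~\ref{lem-T''} and Theorem~\ref{lem_eiggraph-eigpowergraph}---matches the paper's. The key difference is that the paper argues \emph{extremally}: it fixes a $\mathcal{T}$ with maximum spectral radius in $S(m,d,r)\setminus\{\acute{T}_{(m,d)}^r\cup\mathcal{T}''\}$, so maximality forces every attached sub-supertree to already be a hyperstar. In the core-vertex sub-case with $k\ge 2$ attachments, one moving-edges step yields supertrees $H_1,H_2$ (shifting the offending star to $v_i$ or to $v_{i+1}$) both with strictly larger radius; maximality then forces both into $\acute{T}_{(m,d)}^r\cup\{\mathcal{T}''\}$, and since the remaining $k-1$ stars sit at the same vertices in $H_1$ and $H_2$ they cannot both lie in $\acute{T}_{(m,d)}^r$, so one is $\mathcal{T}''$.

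The obstacle you flag in the residual case $T=T_{(m,d)}(i)$ is real, and your proposed dichotomy does not close it. The split ``either $\mathcal{T}=S^r$ is already a hypertree, or (after step~1) $\mathcal{T}$ is $T_{(m,d,r)}(j)$'' misses, for example, a $\mathcal{T}$ carrying a hyperstar of size $m-d-1$ at the path vertex $v_i$ together with a single pendent edge at a core vertex of $e_i$: step~2 sends this to $T_{(m,d)}^r(i)$, yet $\mathcal{T}$ is neither a hypertree nor any $T_{(m,d,r)}(j)$. Any mixture of attachments at $v_i$ and at core vertices of $e_i$ or $e_{i+1}$ behaves the same way. Each such configuration can be patched by moving the core-vertex star to the \emph{other} neighbouring path vertex and checking where that lands, but this is precisely the $H_1$-versus-$H_2$ device the paper uses, and the extremal choice of $\mathcal{T}$ is what makes it terminate in one step rather than spawn further sub-cases.

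A minor point affecting both arguments: Theorem~\ref{thm_verver-edgeedge-veredge}(c),(d) is stated only for a loose path with a \emph{single} attached supertree, so invoking it with other branches present needs a sentence; the underlying moving-edges argument (Theorem~\ref{lem-edgemoving}) extends immediately, and the paper relies on this tacitly as well.
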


\begin{proof}
Choose a supertree  $\mathcal{T}\in S(m,d,r)\setminus\{\acute{T}_{(m,d)}^r\cup \mathcal{T}''\}$ with the maximum spectral radius. Let $P^r_m=(v_1,e_1,v_2,e_2,\ldots, v_d, e_d, v_{d+1})$ be the longest loose path in $\mathcal{T}$. Then $\mathcal{T}\setminus \{e_1,\ldots, e_d\}$ is disconnected. Let
$\mathcal{T}_{1},\ldots, \mathcal{T}_{k}$ be the connected components of $\mathcal{T}\setminus \{e_1,\ldots, e_d\}$ which are not isolated vertex.
By Theorem~\ref{thm-pendenttree-star-path} and  the maximality of $\mathcal{T}$, $\mathcal{T}_{j}$ is a hyperstar with a vertex (say $w_{j}$) of the path $P^r_m$ as its center, for $j=1,\ldots,k$. We distinguish two cases according to $w_i\, (i=1,\ldots,k)$ are contained in $\{v_2,\ldots,v_d\}$ or not.

\noindent
\textbf{Case 1.} $\{w_1,\ldots, w_k\}\subseteq \{v_2,\ldots,v_d\}$.

\noindent
Then  $\mathcal{T}$ must be an $r$th power of a tree $T$ of diameter $d$ and size $m$, and $T\in T_{m,d}\setminus\{\acute{T}_{(m,d)}\cup T''\}$. From Lemma~\ref{lem-T''} and Lemma~\ref{lem_eiggraph-eigpowergraph}, it follows immediately that
$$\rho(\mathcal{T})=\rho(T)^{2/r}<\rho(T'')^{2/r}=\rho(\mathcal{T}'').$$

\noindent
\textbf{Case 2.} $\{w_1,\ldots, w_k\}\not\subseteq \{v_2,\ldots,v_d\}$.

\noindent
If $k=1$, then $w_1$ is a vertex of an edge $e_i$,  and $w_1\not\in \{v_i, v_{i+1}\}$, where $2\leq i\leq d-1$. Then $\mathcal{T}\cong T_{(m,d, r)}(i)$. By Theorem~\ref{thm_verver-edgeedge-veredge} and Lemma~\ref{lem-T''-TmdEdge}, we have
 $$\rho(\mathcal{T})\leq \rho(T_{(m,d, r)}(\lceil d/2\rceil))<\rho(\mathcal{T}'').$$

\noindent
If $k\geq 2$, without loss of generality, we may assume $w_1$ is a vertex of edge $e_i$ and   $w_1\not\in \{v_i, v_{i+1}\}$. Denote by $H_1$ and $H_2$ the  supertrees obtained from $\mathcal{T}$ by moving all edges in $E_{w_1}\cap E(\mathcal{T}_1)$ from $w_1$ to $v_i$ and $v_{i+1}$, respectively.
By Theorem~\ref{thm_verver-edgeedge-veredge}, $\rho(\mathcal{T})<\mbox{min}\{\rho(H_1), \rho(H_2)\}$. The maximality of $\rho(\mathcal{T})$ implies that
$H_1, H_2\in \{\acute{T}_{(m,d)}^r\cup \mathcal{T}''\}$ and one of them is $\mathcal{T}''$. So $\rho(\mathcal{T})<\rho(\mathcal{T}'').$ The proof is finished.
\end{proof}

By Theorems \ref{lem_eiggraph-eigpowergraph}, ~\ref{thm_tree-diam-order} and  Lemma~\ref{lem_diam-order-outside-1class}, we have the following results.
\begin{theorem}\label{thm_superteee-diam}
The first  $\lfloor\frac{d}{2}\rfloor+1$ largest spectral radii of supertrees in the set  $S(m,d,r)$ with $m\geq d+3$ and $d\geq3$ are
$T^r_{(m,d)}(\lfloor d/2\rfloor+1), T^r_{(m,d)}(\lfloor d/2\rfloor),\ldots,T^r_{(m,d)}(3),T^r_{(m,d)}(2), \mathcal{T}''.$
\end{theorem}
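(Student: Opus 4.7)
The plan is to assemble the final ranking from the preparatory results in the section, essentially reducing the problem to the already-established ranking for ordinary trees. First I would observe that by Lemma~\ref{lem_diam-order-outside-1class}, every supertree $\mathcal{T}\in S(m,d,r)$ that does not belong to $\acute{T}_{(m,d)}^r\cup\{\mathcal{T}''\}$ satisfies $\rho(\mathcal{T})<\rho(\mathcal{T}'')$. Consequently, the top $\lfloor d/2\rfloor+1$ spectral radii of $S(m,d,r)$ must be realized within $\acute{T}_{(m,d)}^r\cup\{\mathcal{T}''\}$, and it suffices to rank the members of this finite set.

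Next I would note that $\acute{T}_{(m,d)}^r=\{T_{(m,d)}^r(i):i=2,\ldots,d\}$ consists of the $r$th powers of the trees $T_{(m,d)}(i)$. The symmetry $T_{(m,d)}^r(i)\cong T_{(m,d)}^r(d-i+2)$ (inherited from the base tree by reversing the underlying path) reduces the list to the $\lfloor d/2\rfloor$ distinct members indexed by $i=2,3,\ldots,\lfloor d/2\rfloor+1$. Together with $\mathcal{T}''$, these are exactly $\lfloor d/2\rfloor+1$ candidates, matching the count in the statement.

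Then I would invoke Theorem~\ref{lem_eiggraph-eigpowergraph}, which gives $\rho(G^r)=\rho(G)^{2/r}$ for every ordinary graph $G$; since the map $t\mapsto t^{2/r}$ is strictly increasing on $[0,\infty)$, the spectral-radius ordering among the supertrees $T_{(m,d)}^r(i)$ and $\mathcal{T}''$ is the same as the ordering of $\rho(T_{(m,d)}(i))$ and $\rho(T'')$ among their underlying trees. Invoking Theorem~\ref{thm_tree-diam-order}(a), which gives
\[
\rho(T_{(m,d)}(\lfloor d/2\rfloor+1))>\rho(T_{(m,d)}(\lfloor d/2\rfloor))>\cdots>\rho(T_{(m,d)}(2))>\rho(T''),
\]
transports directly under the $(\cdot)^{2/r}$ map to
\[
\rho(T^r_{(m,d)}(\lfloor d/2\rfloor+1))>\cdots>\rho(T^r_{(m,d)}(2))>\rho(\mathcal{T}'').
\]

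Combining this chain with Lemma~\ref{lem_diam-order-outside-1class} produces the claimed list of the first $\lfloor d/2\rfloor+1$ largest spectral radii. The main obstacle is not any single computation but rather verifying that the reduction actually covers all of $S(m,d,r)$: concretely, one must make sure that the $\lfloor d/2\rfloor$ distinct members of $\acute{T}_{(m,d)}^r$ all outrank $\mathcal{T}''$ (supplied by Theorem~\ref{thm_tree-diam-order}(a) after applying $(\cdot)^{2/r}$) and that nothing else in $S(m,d,r)$ can sneak into the top $\lfloor d/2\rfloor+1$ (supplied by Lemma~\ref{lem_diam-order-outside-1class}). Once these two bookkeeping points are settled, the theorem follows immediately.
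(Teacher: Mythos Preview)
Your proposal is correct and follows exactly the paper's approach: the paper's proof of this theorem is a one-line citation of Theorem~\ref{lem_eiggraph-eigpowergraph}, Theorem~\ref{thm_tree-diam-order}, and Lemma~\ref{lem_diam-order-outside-1class}, and you have simply spelled out how these three ingredients combine. Your added remarks about the symmetry $T_{(m,d)}^r(i)\cong T_{(m,d)}^r(d-i+2)$ and the cardinality check make the argument more explicit than the paper's own presentation, but the logic is identical.
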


\begin{theorem}\label{thm_supertree-diam-d+2edges}
The first  $\lfloor\frac{d}{2}\rfloor-1$ largest spectral radii of supertrees in the set  $S(m,d,r)$ with $m=d+2$ and $d\geq4$ are
$T^r_{(m,d)}(\lfloor d/2\rfloor+1), T^r_{(m,d)}(\lfloor d/2\rfloor),\ldots,T^r_{(m,d)}(3).$
\end{theorem}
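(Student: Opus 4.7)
The plan is to prove two assertions: (i) within the list we have the strict chain $\rho(T^r_{(m,d)}(\lfloor d/2\rfloor+1))>\cdots>\rho(T^r_{(m,d)}(3))$, and (ii) every other $\mathcal{T}\in S(m,d,r)$ satisfies $\rho(\mathcal{T})<\rho(T^r_{(m,d)}(3))$. Assertion (i) is immediate from part (a) of Theorem~\ref{thm_verver-edgeedge-veredge}, since $T^r_{(m,d)}(i)\cong P_d^r(v_i,u)S_2^r$, where $S_2^r$ denotes the hyperstar with $m-d=2$ edges centered at $u$.

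For (ii), fix such a $\mathcal{T}$ and let $P_d^r=(v_1,e_1,\ldots,e_d,v_{d+1})$ be a longest loose path in $\mathcal{T}$. By Theorem~\ref{thm-pendenttree-star-path} applied off the path, each off-path attached component may be taken to be a hyperstar; the total off-path edge count is $m-d=2$. If every attached hyperstar center lies on $\{v_2,\ldots,v_d\}$, then $\mathcal{T}=T^r$ for some $T\in T_{m,d}$ with $T\not\cong T_{(m,d)}(i)$ for any $i\in\{3,\ldots,\lfloor d/2\rfloor+1\}$. Part (b) of Theorem~\ref{thm_tree-diam-order} then gives $\rho(T)<\rho(T_{(m,d)}(3))$, and Theorem~\ref{lem_eiggraph-eigpowergraph} converts this to $\rho(\mathcal{T})<\rho(T^r_{(m,d)}(3))$.

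Otherwise, at least one attachment sits at a core vertex of some edge. When the two off-path edges together form a single $S_2^r$ centered at a core vertex of some $e_j$, we have $\mathcal{T}=P_d^r(e_j,u)S_2^r$, and chaining part (b) of Theorem~\ref{thm_verver-edgeedge-veredge} with Theorem~\ref{thm_path-verH<edgeH} yields
\[
\rho(\mathcal{T})\le\rho(P_d^r(e_{\lceil d/2\rceil},u)S_2^r)<\rho(P_d^r(v_3,u)S_2^r)=\rho(T^r_{(m,d)}(3)).
\]
In the remaining subcases (two separate pendent edges with at least one at a core vertex, or a hyperpath of length $2$ off a core vertex), I would apply parts (c) and (d) of Theorem~\ref{thm_verver-edgeedge-veredge} to each core-vertex attachment in turn: each such attachment can be moved to either of the two adjacent path vertices $v_j,v_{j+1}$, with $\rho$ strictly increasing. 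By choosing the move directions appropriately---so that the two off-path edges land on distinct path vertices, or so that a hyperpath of length $2$ ends rooted at a single path vertex---the result is a tree power $(T^*)^r$ with $T^*\in T_{m,d}$ lying outside the list $\{T_{(m,d)}(i):3\le i\le\lfloor d/2\rfloor+1\}$. Part (b) of Theorem~\ref{thm_tree-diam-order} combined with Theorem~\ref{lem_eiggraph-eigpowergraph} then gives $\rho((T^*)^r)<\rho(T^r_{(m,d)}(3))$, and hence $\rho(\mathcal{T})<\rho(T^r_{(m,d)}(3))$.

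The main obstacle is verifying, in this last subcase, that a choice of move directions landing outside the list always exists. The binary flexibility from parts (c) and (d) of Theorem~\ref{thm_verver-edgeedge-veredge}---each offering two acceptable destinations $v_j,v_{j+1}$, both strictly increasing $\rho$---together with a brief case analysis on the relative positions of the two off-path attachments suffices: the only way all move combinations could collapse the two off-path edges onto a single path vertex is when they were already part of a single $S_2^r$ attachment, which has been handled separately.
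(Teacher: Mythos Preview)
Your approach is essentially the same as the paper's, which simply cites Theorems~\ref{lem_eiggraph-eigpowergraph}, \ref{thm_tree-diam-order} and Lemma~\ref{lem_diam-order-outside-1class}. Since Lemma~\ref{lem_diam-order-outside-1class} literally requires $m\ge d+3$, the paper is tacitly asking the reader to rerun its proof for $m=d+2$; you have carried that out explicitly, with the same ingredients (Theorem~\ref{thm-pendenttree-star-path} to reduce attachments to hyperstars, Theorem~\ref{thm_verver-edgeedge-veredge} and Theorem~\ref{thm_path-verH<edgeH} to handle core-vertex attachments, and Theorem~\ref{thm_tree-diam-order}(b) plus Theorem~\ref{lem_eiggraph-eigpowergraph} for the hypertree case).

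One technical point deserves tightening. In the subcase of two separate pendent edges with at least one at a core vertex, parts (c) and (d) of Theorem~\ref{thm_verver-edgeedge-veredge} are stated only for a \emph{single} attached supertree on a loose path $P_d^r$, so they do not literally apply when a second pendent edge is already sitting elsewhere on the path. The remedy is to go back to Theorem~\ref{lem-edgemoving} and repeat the two-option trick from the proof of Theorem~\ref{thm_verver-edgeedge-veredge}(c): compare the principal-eigenvector entries at the core vertex $w$ and at $v_j$; either moving the pendent edge from $w$ to $v_j$, or moving $e_{j-1}$ from $v_j$ to $w$, strictly increases $\rho$, and the two resulting supertrees are isomorphic (the swap $w\leftrightarrow v_j$ fixes the other attachment). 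Hence $\rho$ strictly increases in the desired direction, and your ``binary flexibility'' argument then goes through. The paper's own proof of Lemma~\ref{lem_diam-order-outside-1class} (Case~2, $k\ge2$) takes exactly the same shortcut in citing Theorem~\ref{thm_verver-edgeedge-veredge}, so this is more a matter of precision than a genuine gap.
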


\section{The second minimal supertree}

Let $P_{m-1}^r=(v_1,e_1,v_2,e_2,\ldots,e_{m-1},v_m)$ be a   loose path of length $m-1$. Denote by $D_{m,r}$ the supertree obtained from $P_{m-1}^r$ by attaching a pendent edge at a core vertex of $e_2$ (see Fig.~\ref{fig-2PATH}(a)). Let $\grave{P}_m^r$ be the supertree obtained from $P_{m-1}^r$ by attaching a pendent edge at the vertex $v_2$ (see Fig.~\ref{fig-2PATH}(b)). We use $S(m, r)$ to denote the set of $r$-uniform supertrees with $m$ edges.

\begin{figure}[h]
\begin{center}
\includegraphics[scale=0.6]{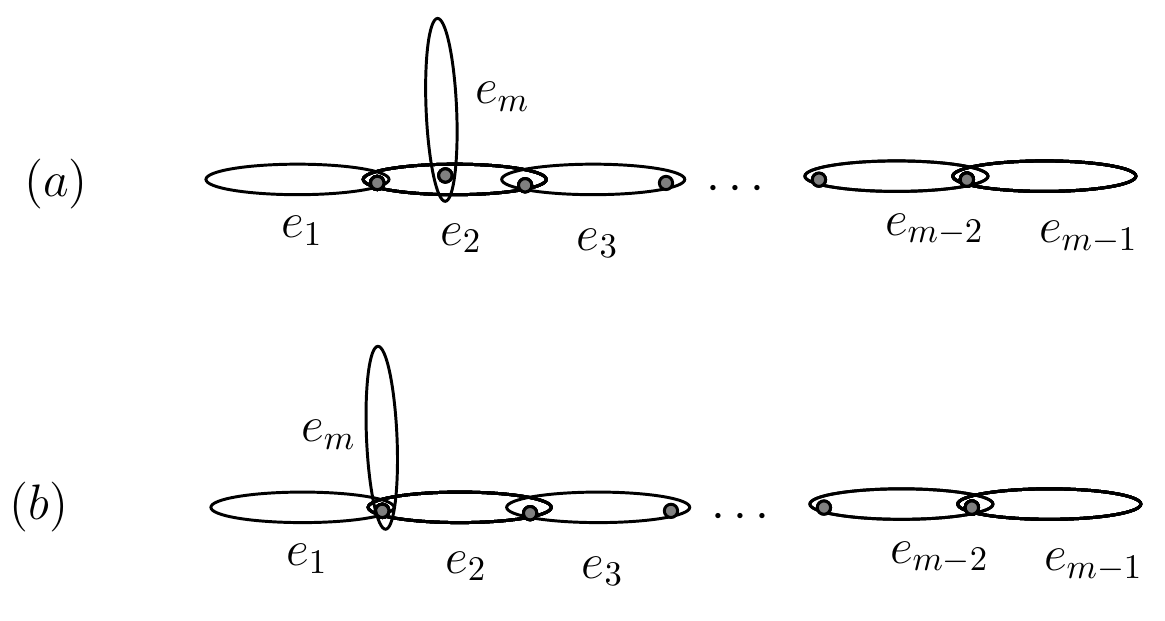}
\caption{Supertrees $(a)\,\,D_{m,r}$;\quad$(b)\,\,\grave{P}_m^r$}.
\end{center}\label{fig-2PATH}
\end{figure}

\begin{theorem}\label{thm-2smallest-superrtree}
Any $r$-uniform  supertree  $\mathcal{T}$  with   $m$ ($m\geq4$) edges different from $P_m^r$  satisfies $\mathcal{T}\succeq D_{m,r}$.
\end{theorem}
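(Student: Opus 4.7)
The plan is to prove $\mathcal{T}\succeq D_{m,r}$ by a case analysis on the diameter $d$ of $\mathcal{T}$, combined with grafting transformations that produce a chain of $\preceq$-smaller supertrees until a diameter-$(m-1)$ tree is reached, where the already-developed machinery pinpoints $D_{m,r}$ as the minimum. Since $\mathcal{T}\neq P_m^r$ (which is the unique $r$-uniform supertree with $m$ edges and diameter $m$), we have $d\le m-1$.

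First suppose $d=m-1$. Then $\mathcal{T}$ is obtained from the loose path $P_{m-1}^r=(v_1,e_1,\ldots,e_{m-1},v_m)$ by attaching one extra pendent edge at an interior vertex—either an intersection vertex $v_i$ with $2\le i\le m-1$, or a core vertex of some edge $e_j$ with $2\le j\le m-2$ (attachments at $v_1,v_m$ or at cores of $e_1,e_{m-1}$ would extend the path to length $m$, giving $P_m^r$, which is excluded). Applying the $\succ$-version of Theorem~\ref{thm_verver-edgeedge-veredge}(a), inherited directly from Theorem~\ref{thm_graftat1vertex}, the minimum (in the $\prec$-order) over intersection attachments is attained at $v_2$. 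Applying the $\succ$-version of Theorem~\ref{thm_verver-edgeedge-veredge}(b), inherited from Theorem~\ref{thm_graftat2adjvertex}, the minimum over core attachments is attained at $e_2$, i.e.\ $D_{m,r}$. Finally, Theorem~\ref{thm_path-verH<edgeH} combined with (b) gives $D_{m,r}\prec$ any intersection attachment. Therefore $\mathcal{T}\succeq D_{m,r}$ in this case.

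Next suppose $d\le m-2$. I plan to reduce to the previous case by constructing a chain $\mathcal{T}=\mathcal{T}_0\succ\mathcal{T}_1\succ\cdots\succ\mathcal{T}_k$ in which $\mathcal{T}_k$ has diameter $m-1$ and is distinct from $P_m^r$, so that the first case yields $\mathcal{T}\succ\mathcal{T}_k\succeq D_{m,r}$. At each iteration, fix a longest path $P$ of the current tree and let $v$ be the branching vertex on $P$ closest to one endpoint of $P$; by the innermost choice, the arm of $P$ from $v$ to that endpoint (of some length $p\ge 1$) is itself a pendent path. If the shortest off-$P$ branch at $v$ is a pendent path of length $q\ge 1$, the fact that $P$ is a longest path forces $q\le p$, and Theorem~\ref{thm_graftat1vertex} applies with the shift $(p,q)\to(p+1,q-1)$ to produce a strictly $\prec$-smaller supertree whose longest path through $v$ has length $d+1$. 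If instead the off-$P$ branch at $v$ is a nontrivial subtree, I first invoke Theorem~\ref{thm-pendenttree-star-path} to replace it by a pendent loose path of the same size (giving an initial $\preceq$-step), and then apply Theorem~\ref{thm_graftat1vertex} as before. Iterating until the diameter reaches $m-1$ yields the required $\mathcal{T}_k$, and the first case completes the argument.

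The main obstacle is the bookkeeping in the second case: one must verify that at each iteration an innermost branching vertex with a usable pendent-path pair exists, and that the chain can be arranged to terminate at diameter exactly $m-1$ rather than overshooting to $m$ (i.e., to $P_m^r$). The replacement step via Theorem~\ref{thm-pendenttree-star-path} may itself increase the diameter beyond $d$ in a single move, so the iteration count is not simply $m-1-d$ and the longest path must be reselected after each transformation; however, each transformation is a $\prec$-decrease and at least one off-$P$ branch is strictly shortened or absorbed, so finiteness of the chain and termination at diameter $m-1$ follow.
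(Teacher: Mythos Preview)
Your route is genuinely different from the paper's. The paper picks a $\preceq$-minimal $\mathcal{T}_0\in S(m,r)\setminus\{P_m^r\}$ and argues that $\mathcal{T}_0$ must be $\grave{P}_m^r$ or $D_{m,r}$, by splitting into the case where $\mathcal{T}_0$ has a vertex of degree $\ge 3$ (Theorems~\ref{thm-pendenttree-star-path} and \ref{thm_graftat1vertex} force $\mathcal{T}_0=\grave{P}_m^r$) and the case where some edge has $\ge 3$ intersection vertices (Theorems~\ref{thm-pendenttree-star-path} and \ref{thm_graftat2adjvertex} force $\mathcal{T}_0=D_{m,r}$); then $\grave{P}_m^r\succ D_{m,r}$ finishes. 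No diameter induction is needed.

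Your base case $d=m-1$ is handled correctly. The iterative step for $d\le m-2$, however, has a real gap that corresponds precisely to the paper's second case. In an $r$-uniform supertree with $r\ge 3$, the off-$P$ structure can attach at a \emph{core} vertex $c\in e_j\setminus\{v_j,v_{j+1}\}$ of some path edge, not only at an intersection vertex $v_i$. When this happens at the innermost branching point, the arm of $P$ from $c$ towards either endpoint is \emph{not} a pendent path attached at $c$: its first edge $e_j$ also contains the intersection vertex $v_{j+1}$ (respectively $v_j$), which has degree $\ge 2$, so the definition of pendent path fails and Theorem~\ref{thm_graftat1vertex} cannot be applied at $c$. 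A concrete obstruction: for $r\ge 4$, take a single edge $e$ and attach one pendent edge at each of four of its vertices; then $m=5$, the diameter is $3<m-1$, every vertex has degree at most $2$, and on any longest path all off-$P$ attachments sit at core vertices of $e$---your procedure has no ``branching vertex with a usable pendent-path pair'' to work with. To salvage the iteration you must add exactly what the paper isolates in its Case~2: when the innermost attachment is at a core vertex $c\in e_j$, use the pendent arm of $P$ at the adjacent intersection vertex $v_j$ (length $j-1$) together with the pendent branch at $c$ (whose length $q$ satisfies $q\le j-1$ because $P$ is longest), and apply Theorem~\ref{thm_graftat2adjvertex} to shift $(j-1,q)\to(j,q-1)$. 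Without this extra case the chain cannot be continued.
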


\begin{proof}
Choose a supertree $\mathcal{T}_0$ from $ S(m,r)\setminus \{P_m^r\}$ such that $\mathcal{T}_0\preceq \mathcal{T}$ for any $\mathcal{T}\in S(m,r)\setminus \{P_m^r\}$. Then $\mathcal{T}_0$ either has a vertex of degree more than two or has an edge with at least three intersection vertices. We consider the two cases as follows.

\noindent
\textbf{Case 1.}  There exists a vertex of degree greater than two, say $v\in V(\mathcal{T}_0)$ with $\deg(v)\geq3$. Thus $\mathcal{T}_0$ can be described as a supertree  in the form of some supertrees, say $\mathcal{T}_1,\ldots, \mathcal{T}_s$ ($s\geq3$),  attached at a single vertex $v$. Denoted $\mathcal{T}_0$ by $\mathcal{T}_1(v)\mathcal{T}_2(v)\cdots(v)\mathcal{T}_s$ (see Fig.~7(a)). Assume that $\mathcal{T}_i$ has $m_i$ edges for $i=1,\ldots,s$. Let $m'=m-(m_1+m_2)$.
By Theorems ~\ref{thm-pendenttree-star-path} and ~\ref{thm_graftat1vertex}, we have
\begin{equation*}
\mathcal{T}_1(v)\mathcal{T}_2(v)\cdots(v)\mathcal{T}_s\succeq P_{m_1}^r(v) P_{m_2}^r(v)\cdots(v)P_{m_s}^r\succeq P_{m_1}^r(v) P_{m_2}^t(v)P_{m'}^r\succeq P_{1}^r(v) P_{1}^r(v)P_{m-2}^r,
\end{equation*}
where all loose paths $P_{m'}^r$, $P_{m-2}^r$ and $P_{m_j}^r$ ($j=1,\ldots, s$) have $v$ as its end vertex. By the minimality of $\mathcal{T}_0$,  $\mathcal{T}_0=P_{1}^r(v) P_{1}^r(v)P_{m-2}^r= \grave{P}_m^r$.

\begin{figure}[!hbpt]
\begin{center}
\includegraphics[scale=0.7]{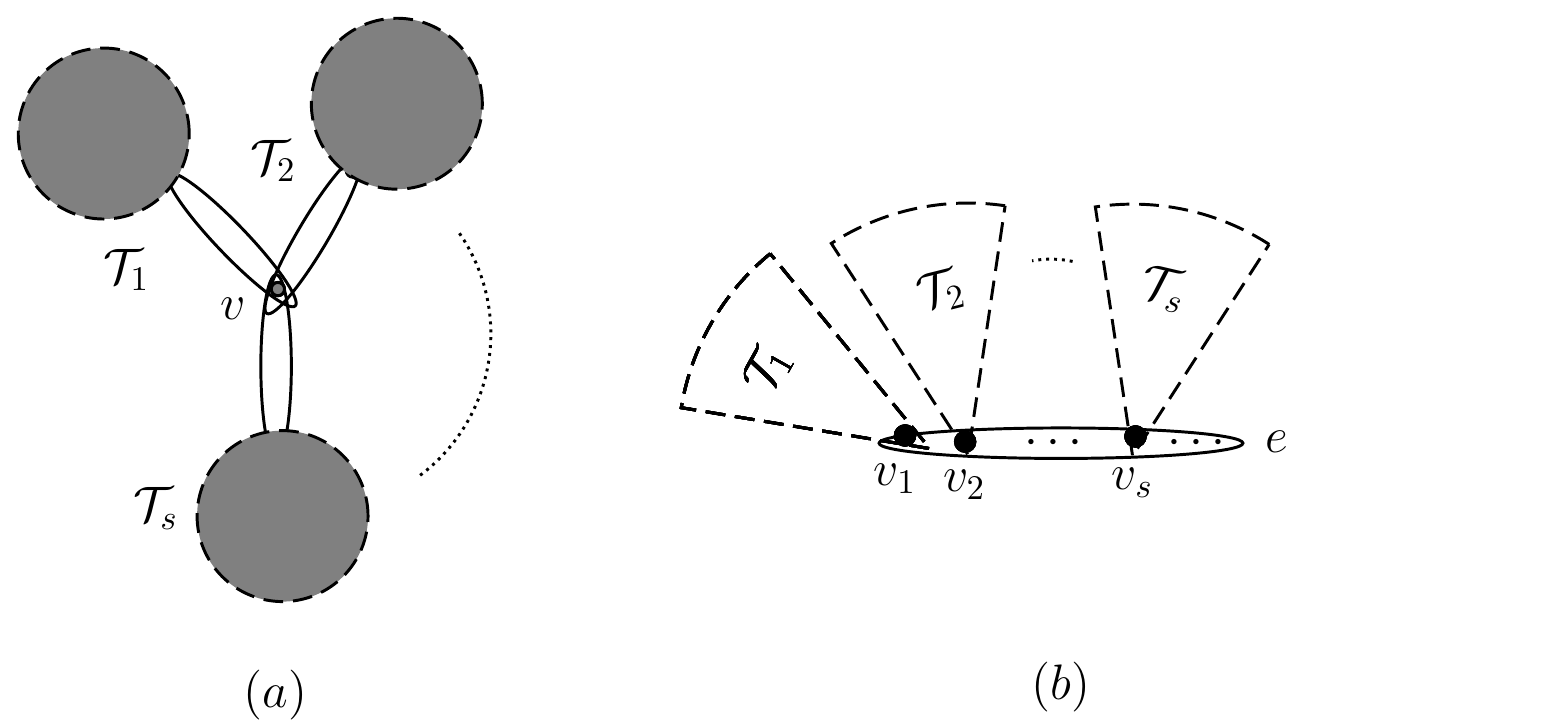}
\caption{Supertrees      $(a)$ and $(b)$}.
\end{center}\label{figT1Ts}
\end{figure}

\noindent
\textbf{Case 2.} There exists an edge $e$ of $\mathcal{T}_0$ with at least three intersection vertices. Without loss of generality, assume that $e=\{v_1,\ldots,v_t\}$ and $\deg(v_i)\geq2$ for $i=1,\ldots,s$ ($3\leq s\leq t$), and other vertices in $e$ (if there are) are core vertices (see Fig.~7(b)).
Then $\mathcal{T}_0$ may be viewed as obtained by attaching supertrees, say $\mathcal{T}_1,\ldots, \mathcal{T}_s$, at $v_1,\ldots,v_s$ respectively.

By Theorems~\ref{thm_graftat2adjvertex}, \ref{thm-pendenttree-star-path} and the minimality of $\mathcal{T}_0$, the following conclusions hold.

(1) $\mathcal{T}_1,\ldots, \mathcal{T}_s$ are pendent paths attached at $v_1,\ldots,v_s$ respectively.

(2) $s=3$.

(3) Two of $\mathcal{T}_1, \mathcal{T}_2, \mathcal{T}_3$ are of length one.

\noindent
Therefore, $\mathcal{T}= D_{m,r}$.

Combining two cases above, we have shown that $\mathcal{T}_0\in \{\grave{P}_m^r,D_{m,r}\}$. Further by (c) of  Theorem~\ref{thm_verver-edgeedge-veredge}, we have  $\grave{P}_m^r\succ D_{m,r}$. So $\mathcal{T}_0=D_{m,r}$.
Thus we conclude that for any $\mathcal{T}\in S(m,r)\setminus\{P_m^r\}$,  $\mathcal{T}\succeq D_{m,r}$.
\end{proof}

\begin{theorem}\label{thm-2smallest-hypergraphs}
The first two smallest spectral radii of   supertrees with $m$ ($m\geq4$) edges are  $P_m^r, D_{m,r}$.
\end{theorem}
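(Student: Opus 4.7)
The plan is to derive this theorem as a direct corollary of two earlier results, namely Theorem~\ref{thm-hyperstar-loosepath} and Theorem~\ref{thm-2smallest-superrtree}; no new machinery is needed.

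First, I would invoke the global lower bound of Theorem~\ref{thm-hyperstar-loosepath}, which says $P_m^r \preceq \mathcal{T}$ for every $\mathcal{T}\in S(m,r)$, with equality if and only if $\mathcal{T}\cong P_m^r$. Since the relation $\mathcal{T}'\preceq\mathcal{T}$ implies $\rho(\mathcal{T}')\le\rho(\mathcal{T})$, as was noted when the order $\preceq$ was introduced in Section~3, $P_m^r$ is the unique supertree in $S(m,r)$ realising the smallest spectral radius. In particular, because $D_{m,r}\not\cong P_m^r$, we obtain the strict inequality $\rho(P_m^r)<\rho(D_{m,r})$.

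Next, for the second-smallest position I apply Theorem~\ref{thm-2smallest-superrtree}, which asserts that every $\mathcal{T}\in S(m,r)\setminus\{P_m^r\}$ satisfies $\mathcal{T}\succeq D_{m,r}$, and hence $\rho(\mathcal{T})\ge\rho(D_{m,r})$. Taking $\mathcal{T}=D_{m,r}$ shows that this lower bound is attained, so among all supertrees distinct from $P_m^r$ the minimum spectral radius is achieved precisely by $D_{m,r}$. Combining the two facts yields the stated ordering $\rho(P_m^r)<\rho(D_{m,r})\le\rho(\mathcal{T})$ for every $\mathcal{T}\in S(m,r)\setminus\{P_m^r\}$, which is exactly the assertion of the theorem.

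There is essentially no obstacle at this step: all of the substantive work (the grafting transformations of Section~4, the edge-releasing operation of Lemma~\ref{lem-edgerelease}, and the matching-polynomial comparisons of Section~3) has already been marshalled inside the proof of Theorem~\ref{thm-2smallest-superrtree} in order to rule out every candidate for the second-minimal supertree other than $D_{m,r}$. The present theorem simply repackages that structural conclusion as a statement about the two smallest values of $\rho(\cdot)$.
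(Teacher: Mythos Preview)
Your proposal is correct and matches the paper's approach: the paper in fact states Theorem~\ref{thm-2smallest-hypergraphs} without any proof, treating it as an immediate corollary of Theorem~\ref{thm-hyperstar-loosepath} and Theorem~\ref{thm-2smallest-superrtree}, exactly as you describe.
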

\section{Closing remarks}

We conclude this section with some remarks on matching polynomial of a supertree.
The work in this paper is based on the relation between the roots of matching polynomial of a supertree  and its spectrum developed in \cite{Zhang_17}. Using the recurrence relations of matching polynomial of supertrees, the effect on the spectral perturbation of supertree by grafting edges in various situations can be explained.  The methods are initially used to compare spectral radii of supertrees in this paper. The methods  are shown to be efficient in dealing with extremal supertrees with respect to their spectral radii, such as in finding the first two smallest supertrees  and the first several  largest supertrees with given diameter.

For the corresponding problem on  a hypergraph,  the characteristic polynomial of adjacency tensor of a  hypergraph might be used to  compare spectral radii of hypergraphs.
However,  the degree of characteristic polynomial of a  hypergraph is very high relative to its order, and  very little is known about it up to now. Finally, we pose the following problem.

\begin{problem}
What kind of polynomial should be associated with  a hypergraph satisfying the following conditions:

(1) The roots of the associated polynomial consist of the eigenvalues, especially the spectral radius of the hypergraph.

(2) The coefficients of the polynomial reflect certain   structural information of the hypergraph, such as matching, cyclic structure or something more complicated.
\end{problem}



\begin{thebibliography}{99}



\bibitem{Bretto-hypergraph}
A. Bretto,  Hypergraph Theory: An Introduction, Springer, 2013.



\bibitem{BrualdiSolheid}
R. Brualdi, E. Solheid,
On the spectral radius of complementary acyclic matrices of zeros and ones,
SIAM J. Algebraic Discrete Methods 7 (1986)  265--272.


\bibitem{changPearsonZhang-08}
K. C. Chang, K. Pearson, T. Zhang,    Perron-Frobenius theorem for nonnegative tensors, Commun. Math. Sci. 6 (2008)  507--520.

\bibitem{changqizhang}
 K.C. Chang, L. Qi, T. Zhang, A survey on the spectral theory of nonnegative tensors, Numer. Linear Alg. Appl., 20 (2013) 891--912.

\bibitem{chenchenzhang-fmc-17}
D. Chen, Z. Chen, X. Zhang,   Spectral radius of uniform hypergraphs and degree sequences, Front. Math. China 12 (2017) 1279--1288.


\bibitem{ClarkCooper}
G. Clark, J. Cooper,
On the adjacency spectra of hypertrees,
arXiv:1711.01466v1.

\bibitem{CollatzSinogowitz57}
L. Collatz, U.  Sinogowitz,
Spektren endlicher Grafen,
Abh. Math. Sem. Univ. Hamburg 21 (1957)  63--77.

\bibitem{CoopDut12}
J. Cooper, A. Dutle,
Spectra of uniform hypergraphs,
Linear Algebra Appl.  436 (2012)  3268--3292.


\bibitem{FanTanPengLIU-DMGT16}
Y. Fan, Y. Tan, X. Peng, A. Liu,  Maximizing spectral radii of uniform hypergraphs with few edges, Discuss. Math. Graph Theory 36 (2016) 845--856.

\bibitem{Fri}
S. Friedland, A. Gaubert, L. Han, Perron-Frobenius theorems for nonnegative multilinear forms
and extensions, Linear Algebra Appl. 438 (2013), 738--749.

\bibitem{GuoShao-tree-diam-06}
J. Guo, J. Shao, On the spectral radius of trees with fixed diameter, Linear Algebra Appl. 413 (2006) 131--147.

\bibitem{HouChangZhang17}
Y. Hou,  A. Chang, L. Zhang,  Largest $H$-eigenvalue of uniform $s$-hypertrees, Front. Math. China (2017). https://doi.org/10.1007/s11464-017-0678-4.

\bibitem{HuQi13normLap} S. Hu, L. Qi,
The Laplacian of a uniform hypergraph,
J. Comb. Optim. 29 (2) (2015)  331--366.

\bibitem{HuQishao13}
S. Hu, L. Qi, J. Shao,  Cored hypergraphs, power hypergraphs and their Laplacian eigenvalues, Linear Algebra Appl. 439 (2013)  2980--2998.



\bibitem{KangNikiYuan2015}
L. Kang, V. Nikiforov, X. Yuan,
The $p$-spectral radius of $k$-partite and $k$-chromatic uniform hypergraphs,
Linear Algebra Appl.  478 (2015)  81--107.


\bibitem{LiShaoQi16}
H. Li, J. Shao, L. Qi, The extremal spectral radii of $k$-uniform supertrees, J. Comb. Optim. 32 (2016) 741--764.



\bibitem{LiFeng79}
Q. Li,  K. Feng, On the largest eigenvalue of a graph, Acta Math. Appl. Sinica 2 (1979)  167--175.

\bibitem{Lim05} L.  Lim,
Singular values and eigenvalues of tensors: a variational approach,
in: Proceedings of the IEEE International Workshop on Computational Advances in Multi-Sensor Adaptive Processing
(CAMSAP'05), vol.~1  (2005) 129--132.


\bibitem{Lim08}
L.  Lim, Eigenvalues of tensors and some very basic spectral hypergraph theory, Matrix compu-tations and scientific computing seminar, http://www.stat.uchicago.edu/lekheng/work/mcsc2.pdf, April 16, 2008.


\bibitem{LiNzHOUmO-LAA-16}
H. Lin, B. Zhou, B. Mo,   Upper bounds for $H$- and $Z$-spectral radii of uniform hypergraphs, Linear Algebra Appl. 510 (2016)  205--221.


\bibitem{LovaszPelikan73}
L. Lov\'asz, J. Pelik\'an,
On the eigenvalues of trees,
Period. Math. Hungar. 3 (1973)  175--182.



\bibitem{LuMan-smallradius-16}
L. Lu, S. Man,   Connected hypergraphs with small spectral radius, Linear Algebra Appl. 509 (2016)  206--227.



\bibitem{Nikiforov14} V. Nikiforov,
Analytic methods for uniform hypergraphs,
Linear Algebra Appl. 457 (2014)  455--535.



\bibitem{qi05}
L. Qi, Eigenvalues of a real supersymmetric tensor,
J. Symb. Comput.\ 40 (2005)  1302--1324.

\bibitem{QiLuo-2017}
L. Qi, Z. Luo, Tensor analysis: spectral theory and special tensors, Siam, 2017.

\bibitem{Shao-tensorproduct}
J.~Y. Shao, A general product of tensors with applications,
Linear Algebra Appl.\ 439 (2013)  2350--366.

\bibitem{XiaoWanglu-supertree-degreesequ-17}
P. Xiao, L.  Wang,  Y. Lu,
The maximum spectral radii of uniform supertrees with given degree sequences,
Linear Algebra Appl. 523 (2017)  33--45.

\bibitem{XiaoWangDu-supertree-diam-18}
P. Xiao, L.  Wang,   Y. Du,
The first two largest spectral radii of uniform supertrees with given diameter,
Linear Algebra Appl. 536 (2018)  103--119.

\bibitem{YANGYANG-SIAM-11}
Q. Yang, Y. Yang, Further results for Perron-Frobenius theorem for nonnegative tensors II, SIAM J. Matrix Anal. Appl. 32 (2011)  1236--1250.


\bibitem{YANGYANG-SIAM-10}
Y. Yang, Q. Yang,   Further results for Perron-Frobenius theorem for nonnegative tensors, SIAM J. Matrix Anal. Appl. 31 (2010)  2517--2530.

\bibitem{YANGYANG-11}
Y. Yang, Q. Yang, On some properties of nonnegativeweakly irreducible tensors, arXiv:1111.0713v3, 2011.



\bibitem{yuanshaoshan-supertrees-16}
X. Yuan, J. Shao, H. Shan,
Ordering of some uniform supertrees with larger spectral radii,
Linear Algebra Appl. 495 (2016)  206--222.

\bibitem{Zhang_17}
W. Zhang, L. Kang, E. Shan, Y. Bai, The spectra of uniform hypertrees, Linear
Algebra Appl.   533 (2017)  84--94.

\bibitem{ZhouSunWangBu-EJC-14}
J. Zhou, L. Sun, W.  Wang, C. Bu, Some spectral properties of uniform hyper-graphs, Electron. J. Combin. 21 (2014) 4--24.

\end{thebibliography}
\end{document}